\documentclass[11pt,reqno]{amsart}
\usepackage[english]{babel}
\usepackage[T1]{fontenc}
\usepackage[utf8]{inputenc}

\usepackage[left=1in,right=1in,top=1in,bottom=1in]{geometry}

\usepackage{float}
\usepackage{tikz}

\usepackage{lineno}
\usepackage{amsfonts}
\usepackage{mathtools}
\usepackage{verbatim}
\usepackage{mathtools}
\usepackage{dsfont}
\usepackage{amsthm}
\usepackage{indentfirst}
\usepackage{amsmath,amssymb}
\usepackage{verbatim}
\usepackage{array}
\usepackage{fancyhdr}
\usepackage{xcolor}
\usepackage{noindentafter}
\usepackage{esint}
\usepackage{mathrsfs}
\usepackage{stmaryrd}
\usepackage{environ}
\usepackage[hidelinks]{hyperref}
\usepackage[toc,page]{appendix}

\usepackage{enumerate}
\usepackage{thmtools}

\NoIndentAfterCmd{\]}
\NoIndentAfterEnv{equation}
\NoIndentAfterEnv{align*}
\NoIndentAfterEnv{itemize}
\NoIndentAfterEnv{enumerate}

\DeclareUnicodeCharacter{00A0}{ }
\DeclareUnicodeCharacter{00A0}{~}



\tolerance 1000






\makeatletter
\@addtoreset{footnote}{section}
\makeatother


\DeclareFontFamily{U}{matha}{\hyphenchar\font45}
\DeclareFontShape{U}{matha}{m}{n}{
  <-6> matha5 <6-7> matha6 <7-8> matha7
  <8-9> matha8 <9-10> matha9
  <10-12> matha10 <12-> matha12
  }{}

\DeclareSymbolFont{matha}{U}{matha}{m}{n}
\DeclareMathSymbol{\Lt}{3}{matha}{"CE}


\DeclareMathOperator{\id}{id}

\newcommand{\R}{\mathbb{R}}

\newcommand{\A}{\mathcal{A}}
\newcommand\res{\mathop{\hbox{\vrule height 7pt width .3pt depth 0pt
\vrule height .3pt width 5pt depth 0pt}}\nolimits}

\DeclareMathOperator{\diag}{diag}
\DeclareMathOperator{\curl}{curl}

\DeclareMathOperator{\cof}{cof}
\DeclareMathOperator{\rank}{rank}
\DeclareMathOperator{\Id}{Id}
\DeclareMathOperator{\di}{d}
\DeclareMathOperator{\dx}{dx}

\DeclareMathOperator{\dv}{div}
\DeclareMathOperator{\tr}{tr}

\DeclareMathOperator{\spann}{span}

\DeclareMathOperator{\Lip}{Lip}
\DeclareMathOperator{\im}{Im}

\DeclareMathOperator{\dist}{dist}

\theoremstyle{plain}

\newtheorem{Teo}{Theorem}[section]
\newtheorem{lemma}[Teo]{Lemma}
\newtheorem{prop}[Teo]{Proposition}

\newtheorem{Cor}[Teo]{Corollary}
\newtheorem{Q}{Question}
\theoremstyle{definition}
\newtheorem{Def}[Teo]{Definition}
\theoremstyle{remark}
\newtheorem{rem}[Teo]{Remark}

\NoIndentAfterEnv{Def}
\NoIndentAfterEnv{lemma}
\NoIndentAfterEnv{prop}
\NoIndentAfterEnv{Cor}
\NoIndentAfterEnv{prop}
\NoIndentAfterEnv{rem}
\NoIndentAfterEnv{Rem}
\NoIndentAfterEnv{Teo}
\NoIndentAfterEnv{Theorem}
\NoIndentAfterEnv{DT}

\title{Geometric measure theory and differential inclusions}

\author[De Lellis]{C. De Lellis}
\address[Camillo De Lellis]{
\newline \indent School of Mathematics, Institute for Advanced Study and Universit\"at Z\"urich 
\newline \indent 1 Einstein Dr., Princeton NJ 05840, USA}
\email{camillo.delellis@math.ias.edu}

\author[De Philippis]{G. De Philippis}
\address[Guido De Philippis]{
\newline \indent SISSA
\newline \indent Via Bonomea 265, 34136, Trieste, Italy}
\email{guido.dephilippis@sissa.it }

\author[Kirchheim]{B. Kirchheim}
\address[Bernd Kirchheim]{
\newline \indent Mathematisches Institut, Universit\"at Leipzig,
\newline \indent Augustusplatz 10, 04109 Leipzig, Germany}
\email{kirchheim@math.uni-leipzig.de}

\author[Tione]{R. Tione}
\address[Riccardo Tione]{
\newline \indent Institut f\"ur Mathematik, Universit\"at Z\"urich,
\newline \indent Winterthurerstrasse 190, CH-8057 Zurich, Switzerland}
\email{riccardo.tione@math.uzh.ch}

\begin{document}

\maketitle

\begin{abstract}
In this paper we consider Lipschitz graphs of functions which are stationary points of strictly polyconvex energies. Such graphs can be thought as integral currents, resp. varifolds, which are stationary for some elliptic integrands. The regularity theory for the latter is a widely open problem, in particular no counterpart of the classical Allard's theorem is known. We address the issue from the point of view of differential inclusions and we show that the relevant ones do not contain the class of laminates which are used in \cite{SMVS} and \cite{LSP} to construct nonregular solutions. Our result is thus an indication that an Allard's type result might be valid for general elliptic integrands. We conclude the paper by listing a series of open questions concerning the regularity of stationary points for elliptic integrands.
\end{abstract}

\section{Introduction}

Let $\Omega\subset\R^m$ be open and $f\in{C}^1(\R^{n\times m},\mathbb{R})$ be a (strictly) polyconvex function, i.e. such that there is a (strictly) convex $g\in C^1$ such that $f (X) = g (\Phi (X))$, where $\Phi (X)$ denotes the vector of subdeterminants of $X$ of all orders. We then consider the following \emph{energy} $\mathds{E}:\Lip(\Omega,\R^n) \to \mathbb{R}$:
\begin{equation}\label{e:energy}
\mathds{E}(u) := \int_\Omega f(D u) dx\, .
\end{equation}

\begin{Def}\label{d:map_stationary}
Consider a map $\bar{u} \in \Lip(\Omega,\R^n)$. The one-parameter family of functions $\bar{u}+\varepsilon v$ will be called {\em outer variations} and $\bar{u}$ will be called \emph{critical for $\mathds{E}$} if
\[
\left.\frac{d}{d\varepsilon}\right|_{\varepsilon = 0}\mathds{E}(\bar{u} + \varepsilon v) = 0 \qquad\qquad\qquad \forall v \in C^\infty_c(\Omega,\mathbb{R}^n) \, .
\]
Given a vector field $\Phi\in C^1_c (\Omega, \R^m)$ we let $X_\varepsilon$ be its flow\footnote{Namely $X_\varepsilon(x) = \gamma_x(\varepsilon)$, where $\gamma_x$ is the solution of the ODE
$\gamma'(t) = \Phi(\gamma(t))$ subject to the initial condition $\gamma(0) = x$.}. The one-parameter family of functions $u_\varepsilon =\bar{u} \circ X_\varepsilon$ will be called an {\em inner variation}. A critical point
$\bar{u} \in \Lip(\Omega,\R^n)$ is \emph{stationary} for $\mathds{E}$ if
\[
\left.\frac{d}{d\varepsilon}\right|_{\varepsilon = 0} \mathds{E}(u_\varepsilon) = 0
\qquad\qquad \forall \Phi\in C^1_c(\Omega,\R^m)\, .
\]
\end{Def}
Classical computations reduce the two conditions above to\footnote{$\langle A,B \rangle := \tr(A^TB)$ denotes the usual Hilbert-Schmidt scalar product of the matrices $A$ and $B$.}, respectively,
\begin{equation}\label{ver}
\int_{\Omega}\langle Df(D \bar{u}),D v\rangle\dx = 0\qquad\qquad \forall v\in C^1_c(\Omega,\mathbb{R}^n).
\end{equation}
and
\begin{equation}\label{hor}
\int_{\Omega}\langle Df(D \bar{u}), D \bar{u} D \Phi\rangle \dx - \int_{\Omega}f(D \bar{u})\dv\, \Phi \dx = 0\qquad\qquad\forall \Phi\in C^1_c(\Omega,\mathbb{R}^m)\, .
\end{equation}
The graphs of Lipschitz functions can be naturally given the structure of integer rectifiable currents (without boundary in $\Omega\times \mathbb R^m$) and of integral varifold, cf. \cite{FED,SIM,MGS1}. In particular the graph of any stationary point $\bar{u}\in \Lip (\Omega, \R^n)$ for a polyconvex energy $\mathds{E}$ can be thought as a stationary point for a corresponding elliptic energy, in the space of integer rectifiable currents and in that of integral varifolds, respectively, see \cite[Chapter 1, Section 2]{MGS2}. Even though this fact is probably well known, it is not entirely trivial and we have not been able to find a reference in the literature: we therefore give a proof for the reader's convenience. Note that a particular example of polyconvex energy is given by the area integrand
\begin{equation}\label{e:area}
\mathcal{A} (X) = \sqrt{\det ({\rm id}_{\mathbb R^{m\times m}} + X^T X)}\, .
\end{equation}
The latter is {\em strongly polyconvex} when restricted to the any ball $B_R \subset \mathbb R^{n\times m}$, namely there is a positive constant $\varepsilon (R)$ such that $X\mapsto \mathcal{A} (X) - \varepsilon (R) |X|^2$ is still polyconvex on $B_R$.

\medskip

When $n=1$ strong polyconvexity reduces to locally uniform convexity and any Lipschitz critical point is therefore $C^{1,\alpha}$ by the De Giorgi-Nash theorem. The same regularity statement holds in the much simpler ``dual case'' $m=1$, where criticality implies that the vector valued map $\bar{u}$ satisfies an appropriate system of ODEs. Remarkably, L. Sz{\'{e}}kelyhidi in \cite{LSP} proved the existence of smooth strongly polyconvex integrands $f: \mathbb R^{2\times 2}\to \R$ for which the corresponding energy has Lipschitz critical points which are nowhere $C^1$. The paper \cite{LSP} is indeed an extension of  a previous groundbreaking result of S. M\"uller and V. \v{S}ver\'ak \cite{SMVS}, where the authors constructed a Lipschitz critical point to a smooth strongly quasiconvex energy (cf. \cite{SMVS} for the relevant definition) which is nowhere $C^1$. A precursor of such examples can be found in the pioneering PhD thesis of V. Scheffer,  \cite{Schef}.

Minimizers of strongly quasiconvex functions have been instead proved to be regular almost everywhere, see \cite{EVA, KRI} and \cite{Schef}. Note that the ``geometric'' counterpart of the latter statement is Almgren's celebrated regularity theorem for integral currents minimizing strongly elliptic integrands~\cite{Almgren68}. Stationary points need not to be local minimizers for the energy, even though every minimizer for an energy is a stationary point. Moreover, combining the uniqueness result in \cite{TAH} and \cite[Theorem 4.1]{SMVS}, it is easy to see that there exist critical points that are not stationary. 

\medskip

Other than the result in \cite{TAH}, not much is known about the properties of stationary points, in particular it is not known whether they must be $C^1$ on a set of full measure. Observe that Allard's $\varepsilon$ regularity theorem applies when $f$ is the area integrand and allows to answer positively to the latter question for $f$ in \eqref{e:area}. The validity of an Allard-type $\varepsilon$-regularity theorem for general elliptic energies is however widely open. 
A first interesting question is whether one could extend the examples of M\"uller and \v{S}ver\`ak and Sz\'ekelyhidi to provide counterexamples. Both in \cite{SMVS} and \cite{LSP}, the starting point of the construction of irregular solutions is rewriting the condition \eqref{ver} as a diff functions erential inclusion, and then finding a so-called $T_N$-configuration ($N = 4$ in the first case, $N = 5$ in the latter) in the set defining the differential inclusion. The main result of the present paper shows that such strategy fails in the case of stationary points. More precisely:

\medskip

\noindent (a) We show that $\bar{u}$ solves \eqref{ver}, \eqref{hor} if and only if there exists an $L^\infty$ matrix field $A$ that solves a certain system of linear, constant coefficients, PDEs and takes almost everywhere values in a fixed set of matrices, which we denote by $K_f$ and call {\em inclusion set}, cf. Lemma \ref{equiv}. The latter system of PDEs will be called a {\em div-curl} differential inclusion, in order to distinguish them from classical
differential inclusions, which are PDE of type $Du\in K$ a.e., and from ``divergence differential inclusions'' as for instance considered in \cite{EEDI}.

\medskip

\noindent (b) We give the appropriate generalization of $T_N$ configurations for {\em div - curl} differential inclusions, which we will call $T_N'$ configurations, cf. Definition \ref{Def_TN'}. As in the ``classical'' case the latter are subsets of cardinality $N$ of the set $K_f$ which satisfy a particular set of conditions.

\medskip
 
\noindent (c) We then prove the following nonexistence result.

\begin{Teo}\label{t:main}
If $f\in C^1 (\R^{n\times m} )$ is strictly polyconvex, then $K_f$ does not contain any set $\{A_1, \ldots , A_N\}$ which induces a $T_N'$ configuration.
\end{Teo}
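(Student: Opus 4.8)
The plan is to exploit the polyconvexity of $f$ in a way analogous to how one uses rank-one convexity to rule out $T_N$ configurations in the classical setting, but adapted to the div-curl structure. First I would recall the precise definition of the inclusion set $K_f$: a point of $K_f$ should be a matrix (say, block-structured) encoding the pair $(D\bar u, Df(D\bar u))$ together with the polyconvex lift, i.e. the components of $\Phi(D\bar u)$ and of $Dg(\Phi(D\bar u))$, assembled so that \eqref{ver} and \eqref{hor} become a single linear constant-coefficient div-curl system. The key algebraic fact I would isolate is that if $X, Y \in \R^{n\times m}$ then $g(\Phi(Y)) - g(\Phi(X)) \geq \langle Dg(\Phi(X)), \Phi(Y) - \Phi(X)\rangle$ by convexity of $g$, with strict inequality when $\Phi(Y)\neq\Phi(X)$ because $g$ is strictly convex; and $\Phi(Y) = \Phi(X)$ forces $Y = X$ (the full vector of minors, in particular the matrix itself sitting among the order-one minors, determines the matrix). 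So strict polyconvexity gives: for $X \neq Y$,
\[
f(Y) - f(X) - \langle Df(X), Y - X\rangle > 0 \quad\text{and}\quad f(X) - f(Y) - \langle Df(Y), X-Y\rangle > 0,
\]
and adding these, $\langle Df(Y) - Df(X), Y - X \rangle > 0$ strictly (strict monotonicity of $Df$ along the polyconvex lift).

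The main step is then to show that a $T_N'$ configuration $\{A_1,\dots,A_N\}\subset K_f$ is incompatible with this strict inequality. As in the classical case, a $T_N'$ configuration (Definition \ref{Def_TN'}) should come with positive coefficients $\mu_i > 0$, $\sum_i \mu_i = 1$, and a "loop" structure where the differences $A_{i+1} - A_i$ (or the relevant barycentric decomposition of each $A_i$) lie in the wave cone $\Lambda$ associated to the linear div-curl operator — precisely the cone of amplitudes along which the symbol of the system degenerates, i.e. the directions carrying one-dimensional oscillations compatible with the PDE. The plan is to test the strict monotonicity inequality against the structure: writing $A_i = (X_i, Df(X_i), \Phi(X_i), Dg(\Phi(X_i)), \dots)$, I would sum $\langle Df(X_j) - Df(X_i), X_j - X_i\rangle$ (or rather the corresponding polyconvex-lifted bilinear pairing) over the configuration with the $T_N'$ weights and rank-one/loop relations, and show that the wave-cone conditions defining $T_N'$ force this weighted sum to vanish, while strict polyconvexity forces each term with $X_i\neq X_j$ to be positive — hence all the $X_i$ coincide, contradicting that a $T_N'$ configuration consists of $N\geq 2$ genuinely distinct points (they must be distinct, and in particular have distinct $D\bar u$-blocks, else the configuration is degenerate).

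To make the cancellation work I would identify the correct bilinear form: the div-curl structure means that if $A, B \in K_f$ with $B - A$ in the wave cone $\Lambda$, then a specific "null form" $\mathbb{Q}(A,B)$ — built from pairing the $Df$-block of one with the $D\bar u$-block of the other and the $g$-gradient block with the $\Phi$-block, consistent with how \eqref{hor} pairs $Df(D\bar u)$ with $D\bar u D\Phi$ — vanishes. This is the div-curl analogue of the fact that $\langle Df(X) - Df(Y), \xi\otimes\eta\rangle$ has no sign constraint from rank-one convexity but $Df(X+\xi\otimes\eta) - Df(X)$ pairs to zero appropriately along the inclusion. Then the telescoping identity for a $T_N'$ loop, $\sum_i \mu_i \mathbb{Q}(A_i, A_{i+1}) = \sum_{i<j}(\text{something})\,\langle Df(X_i) - Df(X_j), X_i - X_j\rangle_{\Phi}$ up to the null-form terms, turns "algebraic loop closes" into "weighted sum of strictly positive quantities is zero."

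The main obstacle I expect is precisely pinning down the right bilinear pairing $\mathbb{Q}$ on $K_f\times K_f$ and verifying that it (i) vanishes on wave-cone-related pairs by the explicit form of the linear operator, and (ii) reconstructs exactly the strict-monotonicity quantity $\langle Df(X_i)-Df(X_j), X_i - X_j\rangle$ (with the polyconvex lift) when summed over a $T_N'$ loop with its weights. The subtlety is that, unlike the gradient case, the $D\bar u$-blocks of the $A_i$ are not themselves curl-free gradients in the configuration — the div-curl inclusion decouples the two blocks — so I must check that the loop/barycenter conditions in Definition \ref{Def_TN'} are exactly strong enough to recover the monotonicity pairing and no more; any slack there would break the argument. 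Handling the bookkeeping of all the minor-components in $\Phi$ (so that "$\Phi(X_i)=\Phi(X_j)\Rightarrow X_i = X_j$" interacts correctly with the wave cone, which a priori only constrains a projection of the $A_i$) is the place where I would need to be most careful.
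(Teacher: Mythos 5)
Your proposal hinges on the claim that strict polyconvexity of $f$ yields strict monotonicity of $Df$, i.e.\ $\langle Df(Y)-Df(X), Y-X\rangle>0$ for $X\neq Y$. This is false, and the error is exactly the gap between $\langle Dg(\Phi(X)),\Phi(Y)-\Phi(X)\rangle$ (which strict convexity of $g$ controls) and $\langle Df(X),Y-X\rangle=\langle Dg(\Phi(X)),D\Phi(X)[Y-X]\rangle$; since $\Phi$ is nonlinear these differ by the higher-order minor terms, and the sign is lost. If the monotonicity inequality you wrote were true, your telescoping argument would equally well rule out classical $T_N$ configurations in the curl-only set $K'_f$, contradicting Sz\'ekelyhidi's construction recalled in Remark \ref{r:Laszlo} and \cite[Theorem 1]{LSP}. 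So the proof cannot be a monotonicity argument that uses only the $(X,Y)$-blocks; it must genuinely exploit the third block $Z=X^TDf(X)-f(X)\,{\rm id}$ coming from the inner variation.

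What the paper actually does is different at every step where your outline becomes concrete. Proposition \ref{p:convexity} records the correct consequence of strict convexity of $g$, namely the inequality \eqref{finalemdim} which contains the minor-correction terms $\sum_Z d^i_Z(\langle\cof(X_i^Z)^T,X_j^Z-X_i^Z\rangle-\det X_j^Z+\det X_i^Z)$. Lemma \ref{l:minors} then shows these corrections vanish after taking the $t^i_j$-weighted sum over a $T_N$ configuration, yielding Corollary \ref{absurd}: $\nu_i:=\sum_j t^i_j c_j + k_i\langle Y_i,C_i\rangle - c_i>0$ for every $i$. Crucially this still involves the values $c_i=f(X_i)$, not merely gradients; it is not a monotonicity statement. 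The contradiction then comes from the $Z$-block: after a gauge transformation (Lemma \ref{simplification}) one may assume $P=Q=0$ and $\tr R=0$, and a direct computation gives $\sum_j\lambda_j X_j^TY_j = \sum_i k_i(k_i-1)\lambda_i C_i^TD_i = R$. Evaluating the relation \eqref{finaleq} at the wave-cone directions $n_i$ (Proposition \ref{p:T_N'_easy}(ii)) produces $Rn_i=\xi_i\nu_i n_i+\sum_{\alpha<i}a_{\alpha i}n_\alpha$; selecting a maximal linearly independent subfamily of the $n_i$ and writing $R$ in a basis adapted to it gives a block upper-triangular matrix whose trace equals $\sum_j\xi_{i_j}\nu_{i_j}$. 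Since $\tr R=0$ and all $\xi_j,\nu_j>0$, this is the contradiction. None of the ingredients after Proposition \ref{p:convexity} — the cancellation of minor terms, the gauge normalization, the identity for $R$, the evaluation at $n_i$, the triangular trace argument — appear in your proposal, and the ``null form $\mathbb Q$'' you posit does not materialize in the actual proof.
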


\begin{rem}[Sz\'ekelyhidi's result]\label{r:Laszlo} Theorem \ref{t:main} can be directly compared with the results in \cite{LSP}, which concern the ``classical'' differential inclusions induced by \eqref{ver} alone. In particular \cite[Theorem 1]{LSP} shows the existence of a smooth strongly polyconvex integrand $f\in C^\infty (\mathbb R^{2\times 2})$ for which the corresponding ``classical'' differential inclusion contains a $T_5$ configuration (cf. Definition \ref{Def_TN}). In fact the careful reader will notice that the $5$ matrices given in \cite[Example 1]{LSP} are incorrect. This is due to an innocuous sign error of the author in copying their entries. While other $T_5$ configurations can be however easily computed following the approach given in \cite{LSP}, according to \cite{Laszlo_private}, the correct original ones of \cite[Example 1]{LSP} are the following:
\[Z_1\doteq
\left(
\begin{array}{cc}
2&2\\
-2&-2\\
-20&-20\\
-14&-14
\end{array}
\right),
Z_2\doteq
\left(
\begin{array}{cc}
3&5\\
-5&-9\\
0&10\\
-3&1
\end{array}
\right),
Z_3\doteq
\left(
\begin{array}{cc}
4&3\\
-9&-5\\
41&0\\
21&-3
\end{array}
\right),
\]
\[Z_4\doteq
\left(
\begin{array}{cc}
-3&-3\\
8&9\\
-54&-72\\
-30&-41
\end{array}
\right),
Z_5\doteq
\left(
\begin{array}{cc}
0&0\\
-1&-2\\
18&36\\
11&22
\end{array}
\right).
\]

These five matrices form a $T_5$ configuration with $k_i = 2,\forall 1\le i \le 5$, $P = 0$, and rank-one "arms" given by

\[C_1\doteq
\left(
\begin{array}{cc}
1&1\\
-1&-1\\
-10&-10\\
-7&-7
\end{array}
\right),
C_2\doteq
\left(
\begin{array}{cc}
1&2\\
-2&-4\\
5&10\\
2&4
\end{array}
\right),
C_3\doteq
\left(
\begin{array}{cc}
1&0\\
-3&0\\
23&0\\
13&0
\end{array}
\right),
\]
\[C_4\doteq
\left(
\begin{array}{cc}
-3&-3\\
7&7\\
-36&-36\\
-19&-19
\end{array}
\right),
C_5\doteq
\left(
\begin{array}{cc}
0&0\\
-1&-2\\
18&36\\
11&22
\end{array}
\right).
\]

\end{rem}

\medskip

Even though it seems still early to conjecture the validity of partial regularity for stationary points, our result leans toward a positive conclusion: Theorem \ref{t:main} can be thought as a first step in that direction. 

Another indication that an Allard type $\varepsilon$-regularity theorem might be valid for at least some class of energies is provided by the recent paper \cite{DDG} of A. De Rosa, the second named author and F. Ghiraldin, which generalizes Allard's rectifiability theorem  to stationary varifolds of a wide class of energies . In fact the authors' theorem characterizes in terms of an appropriate condition on the integrand (called ``atomic condition'', cf. \cite[Definition 1.1]{DDG}) those energies for which rectifiability of stationary points hold.  Furthermore one can use the ideas in \cite{DDG} to show that the atomic conditions implies strong \(W^{1,p}\) convergence of  sequences of stationary equi-Lipschitz graphs, \cite{AT}. When transported to stationary Lipschitz graphs, the latter is yet another obstruction to applying the methods of \cite{SMVS} and \cite{LSP}. In~\cite{De-RosaKolasinski18} it has been  shown that the atomic condition implies Almgren's ellipticity.  It is an intriguing issue to understand if this implication can be reversed and (if not) to understand wether this (hence stronger) assumption on the integrand can be helpful in establishing regularity of stationary points.

We believe that the connection between differential inclusions and geometric measure theory might be fruitful 
and poses a number of interesting and challenging questions. We therefore conclude this work with some related problems in Section \ref{questions}.

\medskip

The rest of the paper is organized as follows: in Section \ref{DI} we rewrite the Euler Lagrange conditions \eqref{ver} and \eqref{hor} as a div-curl differential inclusion and we determine its wave cone. We then introduce the inclusion set $K_f$ and, after recalling the definition of $T_N$ configurations for classical differential inclusions, we define corresponding $T_N'$ configurations for div-curl differential inclusions. In Section \ref{TC} we give a small extension of a key result of \cite{LSR} on classical $T_N$ configurations. In Section \ref{PF} we consider arbitrary sets of $N$ matrices and give an algebraic characterization of those sets which belong to an inclusion set $K_f$ for some strictly polyconvex $f$. In Section \ref{MT} we then prove the main theorem of the paper, Theorem \ref{t:main}. As already mentioned, Section \ref{s:gmt} discusses the link between stationary graphs and stationary varifolds, whereas Section \ref{questions} is a collection of  open questions.


\section{Div-curl differential inclusions, wave cones and inclusion sets}\label{DI}

As written in the introduction, the Euler-Lagrange conditions for energies $\mathds{E}$ are given by:

\begin{equation}\label{vargr}
\left\{
\begin{array}{ll}
\displaystyle \int_{\Omega}\langle Df(D u),D v\rangle\dx = 0 &\forall v\in C^1_c(\Omega,\mathbb{R}^n) \vspace{1mm}\\ 
\displaystyle \int_{\Omega}\langle Df(D u), D u D \Phi\rangle \dx - \int_{\Omega}f(D u)\dv \Phi \dx = 0\qquad & \forall \Phi\in C_c^1(\Omega,\mathbb{R}^m),
\end{array}\right.
\end{equation}
Here we rewrite the system \eqref{vargr} as a differential inclusion. To do so, it is sufficient to notice that the left hand side of the second equation can be rewritten as
\begin{align*}
\int_{\Omega}\langle Df(D u), D u D \Phi\rangle \dx - \int_{\Omega}f(D u)\dv \Phi \dx
& =\int_{\Omega}\langle D u^T Df(D u), D\Phi \rangle - \langle f(D u)\id,D g\rangle \dx\\
& =\int_{\Omega}\langle D u^T Df(D u) - f(D u)\id, D \Phi\rangle \dx
\end{align*}
Hence, the inner variation equation is the weak formulation of
\[
\dv( D u^T Df(D u) - f(D u)\id) = 0.
\]

Since also the outer variation is the weak formulation of a PDE in divergence form, namely
\[
\dv( Df(D u)) = 0,
\]
we introduce the following terminology:

\begin{Def}
A {\em div-curl differential inclusion} is the following system of partial diffential equations for a triple
of maps $X, Y\in L^\infty (\Omega, \mathbb{R}^{n\times m})$ and $Z\in L^\infty (\Omega, \R^{m\times m})$:
\begin{equation}\label{e:div_curl_free}
{\rm curl}\, X = 0, \qquad {\rm div}\, Y =0, \qquad {\rm div}\, Z = 0\, ,
\end{equation}
\begin{equation}\label{e:inclusion}
W := \left( 
\begin{array}{c}
X\\
Y\\
Z
\end{array}
\right) 
\in K_f := \left\{A \in \R^{(2n + m)\times m}:
A =
\left(
\begin{array}{c}
X\\
Df(X)\\
X^TDf(X) - f(X)\id
\end{array}
\right)
\right\},
\end{equation}
where $f\in C^1 (\R^{n\times m})$ is a fixed function. The subset $K_f \subset \R^{(2n+m)\times m}$ will be called the {\em inclusion set} relative to $f$. 
\end{Def}

The following lemma is then an obvious consequence of the above discussion

\begin{lemma}\label{equiv}
Let $f\in C^1 (\R^{n\times m})$. A map
$u \in \Lip(\Omega,\mathbb{R}^n)$ is a stationary point of the energy \eqref{e:energy} if and only there are matrix fields $Y\in L^\infty (\Omega, \R^{n\times m})$ and $Z\in L^\infty (\Omega, \R^{m\times m})$ such that $W = (Du, Y,Z)$ solves the div-curl differential inclusion \eqref{e:div_curl_free}-\eqref{e:inclusion}. 
\end{lemma}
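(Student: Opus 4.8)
The plan is to simply unwind the definitions, relying on the two displayed identities \eqref{ver} and \eqref{hor} that the discussion preceding the statement has already derived, together with the algebraic manipulation carried out just before the Definition of div-curl differential inclusion. Recall first that, by the classical computations recalled in the introduction, a Lipschitz map $u$ is stationary for $\mathds{E}$ precisely when both \eqref{ver} and \eqref{hor} hold; so it suffices to show that the conjunction of \eqref{ver} and \eqref{hor} is equivalent to the solvability of \eqref{e:div_curl_free}-\eqref{e:inclusion} with $X = Du$.

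For the forward implication, assume $u\in\Lip(\Omega,\R^n)$ satisfies \eqref{ver} and \eqref{hor}. Since $u$ is Lipschitz, $Du\in L^\infty(\Omega,\R^{n\times m})$ takes values in a ball $B_R\subset\R^{n\times m}$ with $R = \Lip(u)$; as $f\in C^1$, both $f$ and $Df$ are bounded on $\overline{B_R}$, so the fields $Y := Df(Du)$ and $Z := Du^T Df(Du) - f(Du)\,\id$ belong to $L^\infty$. By construction $W := (Du, Y, Z)$ takes values in $K_f$ almost everywhere. The constraint $\curl Du = 0$ holds because $Du$ is the distributional gradient of a Lipschitz map; the equation $\dv Y = 0$ is exactly the weak formulation \eqref{ver}; and $\dv Z = 0$ is the weak formulation of \eqref{hor} in view of the identity
\[
\int_{\Omega}\langle Df(Du), Du\,D\Phi\rangle - f(Du)\dv\,\Phi\,\dx = \int_{\Omega}\langle Du^T Df(Du) - f(Du)\,\id, D\Phi\rangle\,\dx
\]
established above. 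Hence $W$ solves \eqref{e:div_curl_free}-\eqref{e:inclusion}. Conversely, suppose $Y\in L^\infty(\Omega,\R^{n\times m})$ and $Z\in L^\infty(\Omega,\R^{m\times m})$ are such that $W = (Du, Y, Z)$ solves the div-curl inclusion. The membership $W\in K_f$ a.e. forces $Y = Df(Du)$ and $Z = Du^T Df(Du) - f(Du)\,\id$ pointwise a.e.; then $\dv Y = 0$ reads verbatim as \eqref{ver}, so $u$ is critical, and $\dv Z = 0$ together with the same identity gives \eqref{hor}, so the inner variation vanishes as well.

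I do not expect a genuine obstacle here: the whole content sits in the two algebraic identities and weak formulations already displayed in the text, and the authors are right to call it ``an obvious consequence''. The only points worth a line of care are the $L^\infty$ bounds on $Y$ and $Z$ (which use that a Lipschitz function has gradient valued in a fixed ball together with $f\in C^1$), and the remark that, because of the inclusion $W\in K_f$, the fields $Y$ and $Z$ are not independent unknowns but are determined by $Du$, so the two scalar divergence equations translate back exactly into the Euler-Lagrange system \eqref{vargr}. If one wanted to be fully self-contained one could also recall the change-of-variables computation showing that $\left.\tfrac{d}{d\varepsilon}\right|_{\varepsilon=0}\mathds{E}(u\circ X_\varepsilon)$ equals the left-hand side of \eqref{hor}, but this is precisely the ``classical computation'' cited in the introduction.
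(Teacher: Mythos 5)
Your proof is correct and follows exactly the route the paper intends: the paper declares the lemma ``an obvious consequence of the above discussion'' and your argument is precisely the careful unwinding of that discussion, including the $L^\infty$ bound on $Y$ and $Z$ from the Lipschitz bound on $Du$ together with $f\in C^1$, and the observation that membership in $K_f$ forces $Y = Df(Du)$ and $Z = Du^T Df(Du) - f(Du)\,\id$.
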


\subsection{Wave cone for div-curl differential inclusions} We recall here the definition of wave cone for a system of linear constant coefficient first order PDEs.
Given a system of linear constant coefficients PDEs
\begin{equation}\label{e:linearpde}
\sum_{i=1}^m A_i\partial_iz=0
\end{equation}
in the unknown $z: \R^m \supset \Omega \to \R^d$
we consider 
{\it plane wave} solutions to \eqref{e:linearpde}, that is, solutions of the form
\begin{equation}\label{planewave}
z(x)=ah(x\cdot\xi),
\end{equation}
where $h:\R\to\R$. The 
{\it wave cone} $\Lambda$ is given by the states $a\in\R^d$ for which there is a $\xi\neq 0$ such that for any choice of the profile $h$
the function \eqref{planewave} solves \eqref{e:linearpde}, that is,
\begin{equation}\label{e:wavecone}
\Lambda:=\left\{a\in\R^d:\,\exists\xi\in\R^m\setminus\{0\}\quad
\mbox{with}\quad \sum_{i=1}^m\xi_i A_ia=0\right\}.
\end{equation}


The following lemma is then an obvious consequence of the definition and its proof is left to the reader. 

\begin{lemma}\label{l:wavecones}
The wave cone of the system $\curl X =0$ is given by rank one matrices, whereas the wave cone for the system \eqref{e:div_curl_free} is given by triple of matrices $(X, Y, Z)$ for which there is a unit vector $\xi \in \mathbb S^{m-1}$ and a vector $u\in \R^n$ such that $X = u\otimes \xi$, $Y \xi =0$ and $Z \xi =0$. 
\end{lemma}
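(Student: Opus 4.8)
The plan is to unwind the definition of the wave cone \eqref{e:wavecone} directly: write each of the operators $\curl$, $\dv$, $\dv$ occurring in \eqref{e:div_curl_free} in the canonical first order form $\sum_{i=1}^m A_i\partial_i(\cdot)=0$ of \eqref{e:linearpde}, plug in the plane wave ansatz \eqref{planewave}, and read off the resulting algebraic conditions on the amplitude. No serious work is involved; the only point requiring attention is the index bookkeeping.

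First I would handle $\curl X=0$ on its own. Writing $X$ in rows, $X=(X^1,\dots,X^n)^T$ with $X^\ell\colon\Omega\to\R^m$, the equation $\curl X=0$ is the family of scalar identities $\partial_j X^\ell_k-\partial_k X^\ell_j=0$ for $1\le\ell\le n$ and $1\le j<k\le m$, which is plainly of the form \eqref{e:linearpde}. Substituting $X(x)=a\,h(x\cdot\xi)$ with $a=(a^\ell_k)\in\R^{n\times m}$ gives $\partial_j X^\ell_k=a^\ell_k\xi_j\,h'(x\cdot\xi)$, so the system holds for every profile $h$ if and only if $a^\ell_k\xi_j=a^\ell_j\xi_k$ for all $\ell,j,k$. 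For fixed $\ell$ this says exactly that $a^\ell\in\R^m$ is proportional to $\xi$, i.e.\ $a^\ell=u_\ell\xi$ for some $u_\ell\in\R$, equivalently $a=u\otimes\xi$ with $u=(u_1,\dots,u_n)$. Since the condition is homogeneous in $\xi$ we may normalise $|\xi|=1$, and since $a=0$ lies in the wave cone for trivial reasons, the wave cone of $\curl X=0$ is $\{u\otimes\xi:\,u\in\R^n,\ \xi\in\R^m\}=\{a\in\R^{n\times m}:\rank a\le 1\}$ (so ``rank one'' is read as ``rank at most one'').

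The two divergence operators are even shorter. For $Y=(Y^1,\dots,Y^n)^T\in L^\infty(\Omega,\R^{n\times m})$ the equation $\dv Y=0$ reads $\sum_{j=1}^m\partial_j Y^\ell_j=0$ for each $\ell$; inserting $Y=b\,h(x\cdot\xi)$ yields $\big(\sum_j b^\ell_j\xi_j\big)h'(x\cdot\xi)=(b\xi)_\ell\,h'(x\cdot\xi)$, which vanishes for all profiles iff $b\xi=0$. The identical computation for $Z=c\,h(x\cdot\xi)\in\R^{m\times m}$ gives $c\xi=0$. For the full system \eqref{e:div_curl_free} the unknown is the stacked matrix $W=(X,Y,Z)^T\in\R^{(2n+m)\times m}$ and the coefficient matrices $A_i$ are block diagonal with the three blocks just analysed; hence a plane wave $W=A\,h(x\cdot\xi)$, $A=(a,b,c)^T$, solves \eqref{e:div_curl_free} for every $h$ precisely when simultaneously $a=u\otimes\xi$ for some $u\in\R^n$, $b\xi=0$ and $c\xi=0$. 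Normalising $|\xi|=1$ gives exactly the description in the statement.

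There is essentially no obstacle: the whole content is the translation of $\curl X=0$, $\dv Y=0$, $\dv Z=0$ into the form \eqref{e:linearpde} and the elementary observation that the wave cone of a block-diagonal system is the product of the wave cones of the blocks. I would present the computation in the compact form above and, consistently with the statement that the proof "is left to the reader", omit only the verbatim verification of the block-diagonal reduction.
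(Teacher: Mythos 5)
Your computation is correct and is exactly the elementary verification the paper intends when it declares the lemma ``an obvious consequence of the definition'' and leaves the proof to the reader: substitute the plane-wave ansatz into each block, obtain $a=u\otimes\xi$, $b\xi=0$, $c\xi=0$ for a common $\xi$, and normalise $|\xi|=1$. One caution: your closing remark that the wave cone of a block-diagonal system is \emph{the product} of the wave cones of the blocks is false as a general principle, since the definition \eqref{e:wavecone} requires a single direction $\xi$ working for all blocks simultaneously (the product would allow different directions per block); your actual derivation does use a common $\xi$ throughout, so the conclusion stands, but that aside should be rephrased.
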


Motivated by the above lemma we then define 

\begin{Def}\label{d:cone_dc} The cone $\Lambda_{dc}\subset \mathbb R^{(2n+m)\times m}$ consists of the matrices in block form
\[
\left(
\begin{array}{l}
X\\
Y\\
Z
\end{array}\right)
\]
with the property that there is a direction $\xi\in \mathbb S^{m-1}$ and a vector $u\in \mathbb R^n$ such that $X = u\otimes \xi$, $Y \xi =0$ and $Z\xi =0$. 
\end{Def}

\subsection{$T_N$ configurations} We start definining $T_N$ configurations for ``classical'' differential inclusions.

\begin{Def}\label{Def_TN} An ordered set of $N\geq 2$ matrices $\{X_i\}_{i=1}^N \subset \R^{n\times m}$ of distinct matrices is said to \emph{induce a $T_N$ configuration} if there exist matrices $P, C_i \in\R^{n\times m}$ and real numbers $k_i > 1$ such that:
\begin{itemize}
\item[(a)] Each $C_i$ belongs to the wave cone of ${\rm curl}\, X=0$, namely $\rank (C_i) \leq 1$ for each $i$;
\item[(b)] $\sum_i C_i = 0$;
\item[(c)] $X_1, \ldots, X_n$, $P$ and $C_1, \ldots, C_N$ satisfy the following $N$ linear conditions 
\begin{equation}\label{form}
\begin{split}
&X_1 = P + k_1 C_1 ,\\
&X_2 = P + C_1 + k_2C_2 ,\\
&\dots\\
&\dots\\
&X_N = P + C_1 +\dots + k_NC_N\, .
\end{split}
\end{equation}
\end{itemize}
In the rest of the note we will use the word $T_N$ configuration for the data $P, C_1, \ldots , C_N, k_1, \ldots k_N$. 
We will moreover say that the configuration is {\em nondegenerate} if $\rank (C_i)=1$ for every $i$. 
\end{Def}

Note that our definition is more general that the one usually given in the literature (cf. \cite{SMVS,LSP,LSR}) because we drop the requirement that there are no rank one connections between distinct $X_i$ and $X_j$. Moreover, rather than calling $\{X_1, \ldots , X_N\}$ a $T_N$ configuration, we prefer to say that it ``induces'' a $T_N$ configuration, namely we regard the whole data $X_1, \ldots , X_N, C_1, \ldots , C_N, k_1, \ldots , k_N$ since it is not at all clear that given an ordered set $\{X_1,  \ldots , X_N\}$ of distinct matrices there is at most one choice of the matrices $C_1, \ldots , C_N$ and of the coefficients $k_1, \ldots , k_N$ satisfying the conditions above (if we drop the condition that the set is ordered, then it is known that there is more than one choice, see \cite{FS}).  

We observe that the definition of $T_N$ configuration could be split into two parts. A ``geometric part'', namely the points (b) and (c), can be considered as characterizing a certain ``arrangement of $2N$ points'' in the space of matrices, consisting of:
\begin{itemize}
\item A closed piecewise linear loop, loosely speaking a polygon (not necessarily {\em planar}) with vertices  $P_1= P+C_1, P_2 = P+C_1+C_2, \ldots , P_N = P+ C_1+\ldots + C_N = P$;
\item $N$ additional ``arms'' which extend the sides of the polygon, ending in the points $X_1, \ldots , X_N$.
\end{itemize}
See Figure \ref{f:T5} for a graphical illustration of these facts in the case $N=4$.

\begin{figure}
\begin{tikzpicture}
\draw (-1,-1) -- (3.5,-0.4375);
\draw (1,-0.75) -- (0.58, 3.45);
\draw (0.75, 1.75) -- (-2.85, 0.55);
\draw (-1.5, 1)  -- (-0.75, -2);
\draw (-1.2,-0.8) node [anchor = east] {$P+C_1+C_2$};
\draw (-0.75, -2) node [anchor = north] {$X_2$};
\draw (0.75, 1.75) node [anchor = west] {$P$};
\draw (0.58, 3.45) node [anchor = south] {$X_4$};
\draw (1.4, -0.9) node [anchor = north] {$P+C_1+C_2+C_3$};
\draw (3.5, -0.4375) node [anchor = west] {$X_3$};
\draw (-2.85, 0.55) node [anchor = east] {$X_1$};
\draw (-1.7, 1.1) node [anchor = south] {$P+C_1$};
\end{tikzpicture}
\caption{The geometric arrangement of a $T_4$ configuration.}\label{f:T5}
\end{figure}
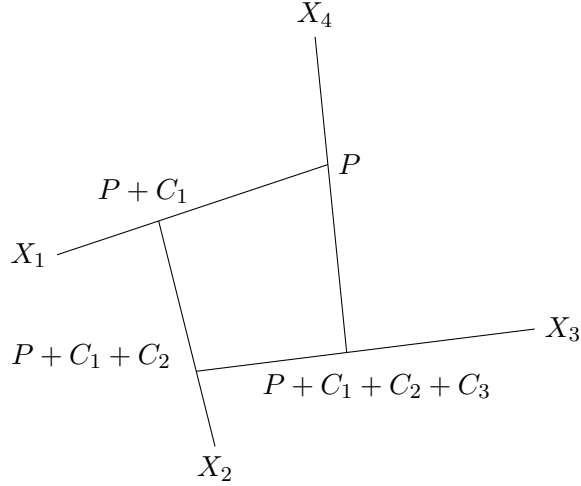

The closing condition in Definition \ref{Def_TN}(b) is a necessary and sufficient condition for the polygonal line to ``close''. Condition (c) determines that each $X_i$ is a point on the line containing the segment $P_{i-1} P_i$. Note that the inequality $k_i >1$ ensures that $X_i$ is external to the segment, ``on the side of $P_i$''. The ``nondegeneracy'' condition is equivalent to the vertices of the polygon being all distinct. Note moreover that, in view of our definition, we include the possibility $N=2$. In the latter case the $T_2$ configuration consists of a single rank one line and of $4$ points $X_1, X_2, C_1, C_2$ lying on it. We have decided to follow this convention, even though this is an unusual choice compared to the literature. 

The second part of the Definition, namely condition (a), is of algebraic nature and related to the fact that $T_N$ configurations are used to study ``classical differential inclusions'', namely PDEs of the form ${\rm curl}\, X =0$. The condition prescribes simply that each vector $X_i-P_i$ belongs to the wave cone of ${\rm curl}\, X=0$. 

\subsection{$T'_N$ configurations}\label{TpN}

In this section we generalize the notion of $T_N$ configuration to div-curl differential inclusions. 
The geometric arrangement remains the same, while the wave cone condition is replaced by the one dictated by the new PDE \eqref{e:div_curl_free}.

\begin{Def}\label{Def_TN'}
A family $\{A_1, \ldots, A_N\}\subset \R^{(2n+m)\times m}$ of $N\geq 2$ {\em distinct}
\[
A_i:=\left(
\begin{array}{c}
X_i\\
Y_i\\
Z_i
\end{array}
\right)
\]
induces a {\em $T_N'$ configuration} if there are matrices $P, Q,  C_i, D_i \in \R^{n\times m}$, $R, E_i\in \R^{m\times m}$ and coefficients $k_i >1$ such that
\[
\left(
\begin{array}{c}
X_i\\
Y_i\\
Z_i
\end{array}
\right) 
= \left(
\begin{array}{c}
P\\
Q\\
R
\end{array}
\right)
+
 \left(
\begin{array}{c}
C_1\\
D_1\\
E_1
\end{array}
\right)
+ \cdots 
+
\left(
\begin{array}{c}
C_{i-1}\\
D_{i-1}\\
E_{i-1}
\end{array}
\right)
+
k_i
\left(
\begin{array}{c}
C_i\\
D_i\\
E_i
\end{array}
\right)
\] 
and the following properties hold:
\begin{itemize}
\item[(a)] each element $(C_i, D_i, E_i)$ belongs to the wave cone $\Lambda_{dc}$ of \eqref{e:div_curl_free}; 
\item[(b)] $\sum_\ell C_\ell = 0$, $\sum_\ell D_\ell =0 $ and $\sum_\ell E_\ell = 0$.
\end{itemize}
We say that the $T'_N$ configuration is {\em nondegeneate} if $\rank (C_i)=1$ for every $i$.
\end{Def}

We collect here some simple consequences of the definition above and of the discussion on $T_N$ configurations. 

\begin{prop}\label{p:T_N'_easy}
Assume $A_1, \ldots , A_N$ induce a $T_N'$ configuration with $P,Q, R, C_i, D_i, E_i$ and $k_i$ as in Definition \ref{Def_TN'}. Then:
\begin{itemize}
\item[(i)] $\{X_1, \ldots , X_N\}$ induce a $T_N$ configuration of the form \eqref{form}, if they are distinct; moreover the $T_N'$ configuration is nondegenerate if and only if the $T_N$ configuration induced by $\{X_1, \ldots , X_N\}$ is nondegenerate;
\item[(ii)] For each $i$ there is an $n_i\in \mathbb S^{m-1}$ and a $u_i\in \R^n$ such that
$C_i = u_i\otimes n_i$, $D_i n_i =0$ and $E_i n_i =0$;
\item[(iii)] $\tr C_i^T D_i = \langle C_i, D_i\rangle = 0$ for every $i$.
\end{itemize}
\end{prop}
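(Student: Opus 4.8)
The three assertions are all immediate consequences of unwinding the definitions, so the plan is mainly bookkeeping with one genuinely substantive point in (iii). For (i), I would simply read off the first block row of the vector identities defining the $T_N'$ configuration: projecting each equation
\[
A_i = \begin{pmatrix} P\\ Q\\ R\end{pmatrix} + \sum_{\ell<i}\begin{pmatrix} C_\ell\\ D_\ell\\ E_\ell\end{pmatrix} + k_i\begin{pmatrix} C_i\\ D_i\\ E_i\end{pmatrix}
\]
onto its top $\R^{n\times m}$ component gives exactly the system \eqref{form} with data $P, C_1,\dots,C_N, k_1,\dots,k_N$. The closing condition $\sum_\ell C_\ell = 0$ is part of Definition \ref{Def_TN'}(b), and each $C_i$ has rank $\le 1$ by Definition \ref{Def_TN'}(a) together with the description of $\Lambda_{dc}$ in Definition \ref{d:cone_dc} (the top block of an element of $\Lambda_{dc}$ is $u_i\otimes n_i$, hence rank $\le 1$). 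So provided the $X_i$ are distinct, $\{X_1,\dots,X_N\}$ induces a $T_N$ configuration in the sense of Definition \ref{Def_TN}. The nondegeneracy equivalence is then a tautology: both notions are defined by the single condition $\rank(C_i)=1$ for all $i$, on the same matrices $C_i$.

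For (ii), this is nothing more than the definition of the wave cone $\Lambda_{dc}$: by Definition \ref{Def_TN'}(a) each $(C_i,D_i,E_i)\in\Lambda_{dc}$, and by Definition \ref{d:cone_dc} membership in $\Lambda_{dc}$ means precisely that there exist $n_i\in\mathbb S^{m-1}$ and $u_i\in\R^n$ with $C_i = u_i\otimes n_i$, $D_i n_i = 0$, $E_i n_i = 0$. There is nothing to prove.

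For (iii), I would use the decomposition from (ii). Write $C_i = u_i\otimes n_i$, so $C_i^T = n_i\otimes u_i$ and $C_i^T D_i = n_i\otimes(D_i^T u_i)$; then $\tr(C_i^T D_i) = u_i^T D_i n_i$. But $D_i n_i = 0$ by (ii), so the trace vanishes. Equivalently, $\langle C_i, D_i\rangle = \langle u_i\otimes n_i, D_i\rangle = u_i^T D_i n_i = 0$. This is the only place where the genuine structural content of $\Lambda_{dc}$ (the orthogonality $D_i n_i = 0$ together with $C_i$ being rank one in the direction $n_i$) is used, and it is what makes the compensated-compactness / div-curl structure bite. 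I do not anticipate any real obstacle: the main point to be careful about is the convention $C_i^T D_i$ versus $D_i^T C_i$ and the identification of $\langle\cdot,\cdot\rangle$ with $\tr(\cdot^T\cdot)$, but both give the same conclusion since $\tr(A^TB)=\tr(B^TA)$. The whole proof is three short paragraphs of definition-chasing, with (iii) being the one line that actually records the algebraic consequence of the div-curl wave cone that will be exploited later in the paper.
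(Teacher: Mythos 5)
Your proof is correct and is essentially the same as the paper's, all three parts being definition-unwinding with (iii) the only step that requires a short computation. The only superficial difference in (iii): the paper expands the Hilbert--Schmidt scalar product $\langle C_i,D_i\rangle$ over an orthonormal basis $\{n_i,v_2,\dots,v_m\}$, killing the first term by $D_in_i=0$ and the rest by $C_iv_j=0$, whereas you compute $C_i^TD_i=n_i\otimes(D_i^Tu_i)$ explicitly and take the trace; both use exactly the same inputs (the rank-one structure $C_i=u_i\otimes n_i$ and $D_in_i=0$), so the bookkeeping is interchangeable.
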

\begin{proof}
(i) and (ii) are an obvious consequence of Definition \ref{Def_TN'} and of Definition \ref{d:cone_dc}. After extending $n_i$ to an orthonormal basis $\{n_i, v_2^j, \ldots v_m^j\}$ of $\R^m$ we can explicitely compute
\[
\langle C_i, D_i \rangle = (D_i n_i, C_i n_i) + \sum_{j=2}^m (D_i v_i^j, C_i v_i^j)=0\, ,
\] 
where $(\cdot, \cdot)$ denotes the Euclidean scalar product.
\end{proof}

\section{Preliminaries on classical $T_N$ configurations}\label{TC}

This section is devoted to a slight generalization of a powerful machinery introduced in \cite{LSR} to study $T_N$ configurations. 

\subsection{Sz\'ekelyhidi's characterization of $T_N$ configurations in $\R^{2\times 2}$} We start with the following elegant characterization. 

\begin{prop}\label{PRO}(\cite[Proposition 1]{LSR})
Given a set $\{X_1,\dots,X_N\}\subset \R^{2\times 2}$ and  $\mu \in \R$, we let $A^\mu$ be the following $N\times N$ matrix:
\[
A^\mu :=
\begin{bmatrix}
    0 & \det(X_1 - X_2) & \det(X_1 - X_3) & \dots  & \det(X_1 - X_N) \\
   \mu\det(X_1 - X_2) & 0 & \det(X_2 - X_3) & \dots  & \det(X_2 - X_N) \\
    \vdots & \vdots & \vdots & \ddots & \vdots \\
    \mu\det(X_1 - X_N) & \mu\det(X_2 - X_N) & \mu\det(X_3 - X_N) & \dots  & 0
\end{bmatrix}.
\]
Then, $\{X_1,\dots, X_N\}$ induces a $T_N$ configuration if and only if there exists a vector $\lambda \in \mathbb{R}^N$ with positive components and  $\mu >1$ such that
\[
A^\mu\lambda = 0.
\]
\end{prop}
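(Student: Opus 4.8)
The plan is to prove Proposition \ref{PRO} by converting the existence of a $T_N$ configuration into the solvability of a linear system whose matrix is exactly (a scalar rearrangement of) $A^\mu$. The starting observation is that on $\R^{2\times 2}$ the determinant is a quadratic form whose associated symmetric bilinear form $b(X,Y) := \tfrac12(\det(X+Y)-\det X - \det Y)$ is nondegenerate, and crucially $\det C = 0$ if and only if $\rank C \le 1$, i.e. the rank-one cone is precisely the null cone of $\det$. Moreover two rank-one lines $C_i$, $C_j$ are ``compatible'' in a way that we will need to track via $b(C_i,C_j)$. So the first step is to record the elementary linear algebra of $\det$ as a quadratic form: $\det(X-Y) = \det X + \det Y - 2b(X,Y)$, and the fact that for $C = u\otimes n$ one has $\det C = 0$.

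Next I would exploit the defining relations \eqref{form}. From $X_i = P + C_1 + \dots + C_{i-1} + k_i C_i$ and the analogous expression for $X_j$ (say $i<j$), subtract to get
\[
X_i - X_j = (k_i - 1) C_i - C_{i+1} - \dots - C_{j-1} - k_j C_j .
\]
Now apply $\det$. Using bilinearity of $b$, $\det(X_i-X_j)$ expands into a sum of terms $b(C_a, C_b)$. The key simplification is that consecutive arms satisfy $b(C_a,C_{a+1})$-type relations dictated by the rank-one structure, and in fact the standard computation (this is the heart of \cite{LSR}) shows that after using $\det C_a = 0$ and the closing condition $\sum_\ell C_\ell = 0$, the quantity $\det(X_i - X_j)$ collapses to something of the form $c_{ij}$ where, up to the universal sign/factor structure, $\det(X_i-X_j)$ for $i<j$ agrees with $\det(X_i-X_j)$ for $j<i$ only after inserting the factor $\mu = k$ (when all $k_i$ are equal) or more precisely the matrix $A^\mu$ encodes the asymmetry coming from the ``$k_i$ on the diagonal, $1$ off it'' pattern in \eqref{form}. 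Concretely, I expect to show that there are scalars $t_i > 0$ (built from $k_i$) and a single $\mu>1$ such that the vector $\lambda_i = t_i$ solves $A^\mu \lambda = 0$, the $i$-th equation of $A^\mu\lambda=0$ being exactly the expansion of a suitable linear combination $\sum_j (\text{coeff}) \det(X_i - X_j) = 0$ that follows from $\sum_\ell C_\ell = 0$ paired against $C_i$ via $b$.

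For the converse, given $\mu>1$ and $\lambda$ with positive entries solving $A^\mu\lambda = 0$, I would reconstruct the configuration: set $k_i$ from $\mu$ and $\lambda$, define candidate arms $C_i$ proportional to rank-one directions extracted from the pairwise differences $X_i - X_j$ (here one uses that in $\R^{2\times 2}$ a matrix of zero determinant is rank one, and that the relations force the relevant differences to lie on rank-one lines), and then verify (b) and (c) hold. This direction is more delicate because one must produce the $C_i$ and $P$ and check $\sum C_i = 0$; the equations $A^\mu\lambda = 0$ are precisely what makes the closing condition consistent. Since the excerpt attributes this to \cite[Proposition 1]{LSR} and the paper only needs a ``slight generalization'' of the machinery, I would present the proof at the level of these reductions, citing \cite{LSR} for the detailed bookkeeping where appropriate.

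The main obstacle I anticipate is the bookkeeping in the forward direction: correctly expanding $\det(X_i - X_j)$ via the bilinear form $b$, keeping track of which cross terms $b(C_a,C_b)$ survive, and matching the resulting $N$ scalar identities (obtained by pairing $\sum_\ell C_\ell = 0$ with each $C_i$) against the precise entrywise pattern of $A^\mu$ — in particular getting the asymmetry (factor $1$ above the diagonal, factor $\mu$ below) to come out right, and identifying $\mu$ with the common value of the $k_i$ only in the nondegenerate/normalized case while handling the general case by the rescaling freedom in the $C_i$. Establishing that the positivity of $\lambda$ corresponds exactly to $k_i > 1$ (the ``arms point outward'' condition) is the other point requiring care.
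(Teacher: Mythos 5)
The paper does not actually prove Proposition~\ref{PRO}: it cites it from \cite{LSR}. So at the level of ``defer to Sz\'ekelyhidi for the bookkeeping'' your plan and the paper agree. However, the sketch you give of how that bookkeeping would go contains misconceptions that would derail an actual write-up, and they are worth flagging because the relevant mechanism is exactly what the paper extracts in Corollary~\ref{coranalog} and in the proof of Proposition~\ref{analogm}.

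First, $\mu$ is \emph{not} the common value of the $k_i$, not even in a normalized case: the $k_i$ are generally all distinct and $\mu = k_1\cdots k_N / \big((k_1-1)\cdots(k_N-1)\big)$; see \eqref{e:formula_mu} in Remark~\ref{r:invert}, and \eqref{defnk} for how the individual $k_i$ are recovered from $(\lambda,\mu)$. Second, the asymmetry of $A^\mu$ (factor $1$ above the diagonal, $\mu$ below) does not originate from any asymmetry in $\det(X_i-X_j)$ versus $\det(X_j-X_i)$ --- on $\R^{2\times 2}$ these are \emph{equal}, since $\det(-A)=\det A$. It comes from the vectors $t^i = \xi_i^{-1}(\mu\lambda_1,\dots,\mu\lambda_{i-1},\lambda_i,\dots,\lambda_N)$, which put the factor $\mu$ on the indices preceding $i$. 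Third, the individual quantities $\det(X_i-X_j)$ do not ``collapse'' after using $\det C_a=0$ and $\sum_\ell C_\ell=0$; they remain genuine bilinear expressions in the $C_a$'s. What vanishes is the weighted combination: the $i$-th row of $A^\mu\lambda=0$ is, after dividing by $\xi_i$, exactly $\sum_j t^i_j \det(X_j - X_i)=0$, and the reason this holds is that $\sum_j t^i_j (X_j - X_i) = -k_i C_i$ is rank one, while $\det$ is affine along rank-one directions (Lemma~\ref{MDL} / polarization), so $\sum_j t^i_j \det(X_j-X_i)=\det\bigl(\sum_j t^i_j(X_j-X_i)\bigr)=\det(-k_iC_i)=0$. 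Your polarization-with-$b(\cdot,\cdot)$ language is the right tool, but it must be applied to this weighted sum, not to single pairwise differences.

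The converse direction as you sketch it (``extract rank-one directions from the pairwise differences $X_i-X_j$'') is not how the configuration is reconstructed either. Given $(\lambda,\mu)$ with $A^\mu\lambda=0$, one first forms the vectors $t^i$, defines $P$ and $C_1,\dots,C_{N-1}$ by solving the recursion $\sum_j t^i_j X_j = P + C_1 + \dots + C_{i-1}$ (with $C_N := -C_1-\dots-C_{N-1}$), and sets $k_i$ by \eqref{defnk}. The nontrivial point is then to \emph{prove} that each $C_i$ is rank one, and it is precisely the $i$-th equation of $A^\mu\lambda=0$, combined again with the affinity of $\det$ along the segment, that yields $\det(k_iC_i)=0$. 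The positivity of $\lambda$ and $\mu>1$ translate via \eqref{defnk} into $k_i>1$; this is a computation, not an assumption you can make directly. As written, your proposal would not close without substantially re-deriving the $t^i$-machinery that the paper reproduces in Corollary~\ref{coranalog}.
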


Even though not explicitely stated in \cite{LSR}, the following Corollary is part of the proof of Proposition \ref{PRO} and it is worth stating it here again, since we will make extensive use of it in the sequel. 

\begin{Cor}\label{coranalog}
Let $\{X_1,\dots,X_N\}\subset \R^{2\times 2}$ and let $\mu>1$ and $\lambda\in \mathbb R^N$ be a vector with positive entries such that $A^\mu \lambda =0$. Define the vectors
\begin{equation}\label{defnt}
t^{i} := \frac{1}{\xi_i}(\mu\lambda_1,\dots,\mu\lambda_{i - 1},\lambda_{i},\dots,\lambda_N), \text{ for } i \in \{1,\dots,N\}
\end{equation}
where $\xi_i>0$ is a normalizing constant so that $\|t^i\|_1 := \sum_j |t^i_j|= 1,\forall i$. Define the matrices $C_j$ with $j\in \{1, \ldots, N-1\}$ and $P$ by solving recursively 
\begin{equation}\label{sum}
\sum_{j = 1}^Nt_j^iX_j = P + C_1 + \dots + C_{i - 1}
\end{equation}
and set $C_N := - C_1- \ldots - C_{N-1}$. Finally, define
\begin{equation}\label{defnk}
k_i = \frac{\mu\lambda_1 + \dots + \mu\lambda_i + \lambda_{i + 1} \dots + \lambda_N}{(\mu - 1)\lambda_i}\, .
\end{equation}
Then $P, C_1, \ldots , C_N$ and $k_1, \ldots k_N$ give a $T_N$ configuration induced by $\{X_1, \ldots , X_N\}$
(i.e. \eqref{form} holds). 
 
Moreover, the following relation holds for every $i$:
\begin{equation}\label{sumdet}
\det\left(\sum_{j = 1}^Nt_j^iX_j\right) = \sum_{j=1}^Nt_j^i\det(X_j)\, .
\end{equation}
\end{Cor}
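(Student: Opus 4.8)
The plan is to verify directly that the data $P, C_1,\dots,C_N, k_1,\dots,k_N$ constructed via \eqref{defnt}–\eqref{defnk} satisfy the three defining properties of a $T_N$ configuration, namely the linear relations \eqref{form}, the rank-one constraint $\rank C_i \le 1$ together with $\sum_i C_i = 0$, and the positivity $k_i > 1$; and then to establish the minor-additivity identity \eqref{sumdet}. First I would record two elementary facts about the vectors $t^i$: each $t^i$ has nonnegative entries summing to $1$ (by construction, using $\mu > 1$ and $\lambda_j > 0$), and the telescoping structure of \eqref{defnt} gives a clean formula for the difference $\xi_{i+1} t^{i+1} - \xi_i t^i = (\mu-1)\lambda_i e_i$, i.e. consecutive (unnormalized) weight vectors differ only in the $i$-th coordinate. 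This is the algebraic engine behind everything else.

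Next I would \emph{define} $C_i$ for $i \le N-1$ by the recursion \eqref{sum}, which is consistent since it just says $C_i = \sum_j t^{i+1}_j X_j - \sum_j t^i_j X_j$ with $P := \sum_j t^1_j X_j$; then set $C_N := -\sum_{i<N} C_i$, so $\sum_i C_i = 0$ holds by fiat. The key computation is that, using the difference formula for the $t^i$'s,
\[
C_i = \sum_j \Big(t^{i+1}_j - t^i_j\Big) X_j = \frac{(\mu-1)\lambda_i}{\xi_{i+1}\xi_i}\Big(\xi_i X_i - \tfrac{\xi_{i+1}-\xi_i}{(\mu-1)\lambda_i}\textstyle\sum_j \xi_i t^i_j X_j\Big)
\]
— I would instead phrase this more cleanly as: $\xi_{i+1}t^{i+1} = \xi_i t^i + (\mu-1)\lambda_i e_i$, hence $C_i$ is a positive multiple of $X_i - \sum_j t^i_j X_j = X_i - (P + C_1 + \dots + C_{i-1})$. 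This simultaneously yields the relation $X_i = P + C_1 + \dots + C_{i-1} + k_i C_i$ with $k_i$ the reciprocal of that positive multiple, and one checks this reciprocal is exactly the expression \eqref{defnk}, which is $>1$ precisely because the numerator exceeds $(\mu-1)\lambda_i$ (as $\mu\lambda_1 + \dots + \mu\lambda_{i-1} + \lambda_{i+1} + \dots + \lambda_N \ge 0$ and $\mu\lambda_i > \lambda_i$, using $\lambda_j > 0$). For $C_N$ one must separately check it has rank $\le 1$ and that $k_N$ from \eqref{defnk} closes the loop; this follows from $A^\mu \lambda = 0$: the $N$-th row of $A^\mu \lambda = 0$ is exactly the scalar identity needed, and the rank claim for each $C_i$ including $C_N$ comes from the $2\times 2$ determinant vanishing, which is where the hypothesis $A^\mu\lambda = 0$ (rows $1$ through $N-1$) enters — each off-diagonal relation $\det(X_a - X_b)$ appearing in $A^\mu\lambda$ encodes the rank-one (i.e. determinant-zero) condition on the corresponding $C_i$.

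Finally, for \eqref{sumdet} I would use that $\det$ is a quadratic form on $\R^{2\times 2}$ with associated symmetric bilinear form $\det(X,Y) := \frac12(\det(X+Y) - \det X - \det Y)$, so for any probability weights $t_j$,
\[
\det\Big(\sum_j t_j X_j\Big) - \sum_j t_j \det X_j = -\sum_{j<k} t_j t_k\, \det(X_j - X_k).
\]
Applying this with $t = t^i$ and invoking the $i$-th row of $A^\mu\lambda = 0$ (which, after rescaling by $1/\xi_i$, says precisely $\sum_{j \ne i} t^i_j \det(X_i - X_j) \cdot(\text{sign factors}) = 0$, and more generally that the relevant weighted sum of $\det(X_j-X_k)$ vanishes) shows the right-hand side is zero. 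I expect the main obstacle to be bookkeeping: carefully matching the $\mu$-weighted versus unweighted entries of $A^\mu$ against the piecewise structure of $t^i$ (the switch from $\mu\lambda$ to $\lambda$ happening at index $i$), and making sure the sign conventions in the antisymmetric-looking but not-quite-antisymmetric matrix $A^\mu$ are tracked correctly so that each row of $A^\mu\lambda = 0$ delivers exactly the rank-one condition on $C_i$ and the determinant identity \eqref{sumdet} for the weight $t^i$. None of the individual steps is deep, but the indexing is where an error would most naturally creep in.
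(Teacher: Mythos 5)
Note first that the paper offers no proof of this corollary: it records the statement as implicit in the proof of \cite[Proposition 1]{LSR} and defers to that reference, so there is no in-house argument to compare against. Much of your sketch is correct: the telescoping $\xi_{i+1}t^{i+1}-\xi_i t^i=(\mu-1)\lambda_i e_i$ is indeed the right engine, it gives $C_i=\frac{(\mu-1)\lambda_i}{\xi_{i+1}}\bigl(X_i-\sum_j t^i_j X_j\bigr)$ with $k_i=\xi_{i+1}/((\mu-1)\lambda_i)$, and this matches \eqref{defnk}; moreover $k_i>1$ follows from $\xi_{i+1}=\xi_i+(\mu-1)\lambda_i>(\mu-1)\lambda_i$. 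One clarification you should make, though: the closure $C_N=-\sum_{i<N}C_i$ and the relation $X_N=P+C_1+\cdots+C_{N-1}+k_NC_N$ do \emph{not} need the $N$-th row of $A^\mu\lambda=0$; they are automatic once you notice that the telescoping is cyclic, since $t^{N+1}:=\mu\lambda/\xi_{N+1}$ equals $t^1$. The hypothesis $A^\mu\lambda=0$ enters only for the rank-one condition and for \eqref{sumdet}, and these two are in fact the \emph{same} assertion: because $C_i$ is a positive multiple of $\sum_j t^i_j(X_i-X_j)$ and $\sum_j t^i_j=1$, the polarization of $\det$ on $\R^{2\times 2}$ gives
\[
\det C_i \;\propto\; \sum_j t^i_j\det(X_i-X_j)\;-\;\sum_{j<k}t^i_j t^i_k\det(X_j-X_k)\,,
\]
the first sum vanishes by row $i$ of $A^\mu\lambda=0$, and the second is precisely the failure of \eqref{sumdet}.

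This is where your proposal has a genuine gap. You claim the cross-sum $\sum_{j<k}t^i_j t^i_k\det(X_j-X_k)$ vanishes ``by invoking the $i$-th row of $A^\mu\lambda=0$'', but the $i$-th row involves only the quantities $\det(X_j-X_i)$ and cannot control $\det(X_j-X_k)$ for $j,k\neq i$; a single row is structurally insufficient, and the closing remark that the remaining work is ``bookkeeping'' understates the problem. What is actually needed is a combination of rows. Set $d_{jk}:=\det(X_j-X_k)$ and write $\xi_i^2\sum_{j<k}t^i_j t^i_k d_{jk}=\mu^2 S_1+\mu S_2+S_3$ with $S_1=\sum_{j<k<i}\lambda_j\lambda_k d_{jk}$, $S_2=\sum_{j<i\le k}\lambda_j\lambda_k d_{jk}$, $S_3=\sum_{i\le j<k}\lambda_j\lambda_k d_{jk}$. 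Contracting the whole system $A^\mu\lambda=0$ against $\lambda$ (using $d_{jk}=d_{kj}$) yields $(\mu+1)(S_1+S_2+S_3)=0$; contracting only rows $1,\dots,i-1$ against $(\lambda_1,\dots,\lambda_{i-1})$ yields $(\mu+1)S_1+S_2=0$. Eliminating gives $S_2=-(\mu+1)S_1$ and $S_3=\mu S_1$, hence $\mu^2S_1+\mu S_2+S_3=\bigl(\mu^2-\mu(\mu+1)+\mu\bigr)S_1=0$, which is \eqref{sumdet} and, together with row $i$, the rank-one claim. This two-contraction argument is the actual content of the corollary and is the missing idea in your sketch.
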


\begin{rem}\label{r:invert} Observe that the relations \eqref{defnk} can be inverted in order to compute $\mu$ and $\lambda$  (the latter up to scalar multiples) in terms of $k_1, \ldots , k_N$. In fact, let us impose 
\[
\|\lambda\|_1 = \lambda_1 + \cdots + \lambda_N =1\, .
\] 
Then, regarding $\mu$ as a parameter, the equations \eqref{defnk} give a linear system in triangular form which can be explicitely solved recursively, giving the formula 
\begin{align}
\lambda_j &= \frac{k_1 k_2 \cdots k_{j-1}}{(\mu-1)(k_1-1) (k_2-1) \cdots (k_j-1)}\, . \label{e:recursive}
\end{align}
The following identity can easily be proved by induction:
\[
\frac{1}{k_1-1} + \frac{k_1}{(k_1-1)(k_2-1)} + \cdots + \frac{k_1 \cdots k_{j-1}}{(k_1-1) \cdots (k_j-1)} = \frac{k_1\cdots k_j}{(k_1-1)\cdots (k_j-1)} -1\, .
\]
Hence, summing \eqref{e:recursive} and imposing $\sum_j \lambda_j =1$ we find the equation
\[
1 = \frac{1}{\mu-1} \left(\frac{k_1\cdots k_N}{(k_1-1) \cdots (k_N-1)}-1\right)\, ,
\]
which determines uniquely $\mu$ as
\begin{equation}\label{e:formula_mu}
\mu = \frac{k_1\cdots k_N}{(k_1-1) \cdots (k_N-1)}\, .
\end{equation}
\end{rem}

A second corollary of the computations in \cite{LSR} is that

\begin{Cor}\label{c:converse} Assume $\{X_1, \ldots , X_N\}\in \R^{2\times 2}$ induce the $T_N$ configuration of form \eqref{form} and let $\mu$ and $\lambda$ be as in \eqref{e:recursive} and \eqref{e:formula_mu}. Then $A^\mu \lambda =0$.
\end{Cor}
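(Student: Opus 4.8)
The plan is to verify $A^\mu\lambda=0$ by a direct computation that mirrors the one underlying Corollary~\ref{coranalog} but is run in the opposite direction: here the coefficients $\mu,\lambda$ are prescribed from the outset, in terms of the given $k_i$, via \eqref{e:recursive}--\eqref{e:formula_mu}. Since each $k_i>1$ one checks at once that $\mu>1$ and $\lambda_i>0$, and by the inversion performed in Remark~\ref{r:invert} the identities \eqref{defnk} hold. Introducing $\xi_i$ and the vectors $t^i$ exactly as in \eqref{defnt}, the identities \eqref{defnk} translate into the bookkeeping relations $\xi_{i+1}=\xi_i+(\mu-1)\lambda_i=(\mu-1)\lambda_ik_i$ and $\lambda_\ell k_\ell+\sum_{j>\ell}\lambda_j=\mu/(\mu-1)$ for every $\ell$. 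I will also write $P_i:=P+C_1+\dots+C_{i-1}$, so that $P_{i+1}=P_i+C_i$ and $X_i=P_i+k_iC_i$, and I will use repeatedly that $\rank C_i\le1$, hence $\det C_i=0$.

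The computation rests on two identities. The first is the ``geometric'' identity $\sum_j t^i_jX_j=P_i$ (which is precisely \eqref{sum}): substituting $X_j=P+C_1+\dots+C_{j-1}+k_jC_j$ and collecting, for fixed $\ell$, the coefficient of $C_\ell$, one computes it to equal $\tfrac{\mu}{\mu-1}+\xi_i\mathbf 1_{\{\ell<i\}}$ after inserting the relations above; since $\sum_\ell C_\ell=0$, only the second summand survives and reassembles into $\xi_i(P_i-P)$, giving the claim. The second is the ``determinant'' identity $\sum_j t^i_j\det X_j=\det P_i$. On $\R^{2\times2}$ the determinant of a sum expands as $\det(A+B)=\det A+\det B+2\,b(A,B)$, where $b$ is the symmetric bilinear form $b(A,B):=\tfrac12(\det(A+B)-\det A-\det B)$. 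For $i=1$ the first identity gives $\sum_j\lambda_j(X_j-P)=0$, so the claim reduces to $\sum_j\lambda_j\det(X_j-P)=0$; expanding $X_j-P=C_1+\dots+C_{j-1}+k_jC_j$, using $\det C_\ell=0$ and collecting the coefficient of each $b(C_a,C_b)$ (which works out to $\lambda_bk_b+\sum_{j>b}\lambda_j=\tfrac{\mu}{\mu-1}$), the left-hand side becomes $\tfrac{\mu}{\mu-1}\bigl(\det(\sum_\ell C_\ell)-\sum_\ell\det C_\ell\bigr)=0$. For general $i$ I argue by induction: setting $g_i:=\sum_j t^i_j\det X_j-\det P_i$, from $\xi_{i+1}t^{i+1}_j=\xi_it^i_j+(\mu-1)\lambda_i\delta_{ij}$ and from $\det X_i=\det P_i+2k_i\,b(P_i,C_i)$, $\det P_{i+1}=\det P_i+2\,b(P_i,C_i)$, one finds
\[
\xi_{i+1}g_{i+1}-\xi_ig_i=\bigl(\xi_i+(\mu-1)\lambda_i-\xi_{i+1}\bigr)\det P_i+2\bigl((\mu-1)\lambda_ik_i-\xi_{i+1}\bigr)b(P_i,C_i)=0,
\]
so $g_i\equiv0$.

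With both identities in hand the conclusion is immediate. Using the definition of $A^\mu$ and that $\|t^i\|_1=1$ with positive entries, the $i$-th component of $A^\mu\lambda$ equals $\sum_j(\xi_it^i_j)\det(X_i-X_j)=\xi_i\bigl(\det X_i+\sum_j t^i_j\det X_j-2\,b(X_i,\sum_j t^i_jX_j)\bigr)$, and substituting the two identities this is $\xi_i\bigl(\det X_i+\det P_i-2\,b(X_i,P_i)\bigr)=\xi_i\det(X_i-P_i)=\xi_ik_i^2\det C_i=0$. Hence $A^\mu\lambda=0$.

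I do not expect a serious obstacle: the argument is essentially the bookkeeping of the two identities above, the two non-obvious points being that the coefficient of $C_\ell$ in the first identity is constant up to the jump $\xi_i\mathbf 1_{\{\ell<i\}}$, and that $g_i$ satisfies the homogeneous recursion $\xi_{i+1}g_{i+1}=\xi_ig_i$. The one thing to resist is deriving the statement from Proposition~\ref{PRO}: that only yields \emph{some} $\mu',\lambda'$ with $A^{\mu'}\lambda'=0$, and identifying these with the prescribed $\mu,\lambda$ would need uniqueness of the data $(C_i,k_i)$ attached to the ordered tuple $(X_1,\dots,X_N)$, which is not available.
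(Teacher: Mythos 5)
Your proof is correct and takes a genuinely different route from the one the paper relies on. The paper does not prove the Corollary directly but attributes it to the computations in \cite{LSR}; the general statement it does prove is Proposition~\ref{analogm}, whose argument rests on two pieces of machinery: Lemma~\ref{l:linear} for the linear identity $\sum_j t^i_j X_j = P + C_1 + \dots + C_{i-1}$, then the Matrix Determinant Lemma (Lemma~\ref{MDL}) applied telescopically to upgrade that to the minor identity, followed by a translation $Y_j := X_j - X_i$ to deduce the vanishing of $\sum_j t^i_j \det (X_j - X_i)$. You instead stay entirely inside $\R^{2\times 2}$: you re-derive the linear identity by direct bookkeeping (a special case of Lemma~\ref{l:linear}); you replace the Matrix Determinant Lemma by the polarization $\det(A+B)=\det A + \det B + 2\,b(A,B)$ of the $2\times 2$ determinant and prove $\sum_j t^i_j \det X_j = \det P_i$ by the one-step homogeneous recursion $\xi_{i+1}g_{i+1}=\xi_i g_i$; and you finish by expanding $\det(X_i - X_j)$ bilinearly so that the $i$-th component of $A^\mu\lambda$ collapses to $\xi_i\det(X_i - P_i)=\xi_i k_i^2\det C_i = 0$. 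Your version is more elementary and self-contained, at the cost of being tied to $\R^{2\times 2}$ via the quadratic nature of $\det$; the paper's machinery is heavier but yields the full Proposition~\ref{analogm} for arbitrary minors in one stroke. Your closing caveat is accurate and worth keeping: the statement of Proposition~\ref{PRO} only produces \emph{some} pair $(\mu',\lambda')$ with $A^{\mu'}\lambda'=0$, and identifying it with the prescribed $(\mu,\lambda)$ of Definition~\ref{d:defining_vector} would require uniqueness of the data $(C_i,k_i)$ attached to the ordered tuple, which the paper explicitly declines to assert --- so the explicit computation you carry out is genuinely needed.
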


\subsection{A characterization of $T_N$ configurations in $\R^{n\times m}$} We start with a straightforward consequence of the results above.

Let us first introduce some notation concerning multi-indexes. We will use $I$ for multi-indexes referring to ordered sets of rows of matrices and $J$ for multi-indexes referring to ordered sets of columns. In our specific case, where we deal with matrices in $\R^{n\times m}$ we will thus have
\begin{align*}
I &= (i_1,\dots,i_r),\qquad 1\le i_1<\dots< i_r \le n\, ,\\
 \text{ and } \qquad J &= (j_1,\dots,j_s),\qquad 1\le j_1< \dots< j_s\le m\, 
\end{align*}
and we will use the notation $|I|:= r$ and $|J|:=s$. In the sequel we will always have $r = s$. 

\begin{Def}\label{multiind}
We denote by $\mathcal{A}_r$ the set
\[
\mathcal{A}_r = \{(I,J): |I| = |J| = r\},\qquad  2\le r \le \min(n,m) .
\]
For a matrix $M\in\R^{n\times m}$ and for $Z\in \mathcal{A}_r$ of the form $Z = (I,J)$, we denote by $M^Z$ the squared $r\times r$ matrix obtained by $A$ considering just the elements $a_{ij}$ with $i\in I$, $j\in J$ (using the order induced by $I$ and $J$). 

Given a set $\{X_1,\dots, X_N\}\subset \R^{n\times m}$, $\mu \in \R$ and $Z \in \mathcal{A}_r$, we introduce the matrix
\[
A_Z^\mu:=
\begin{bmatrix}
    0 & \det(X^Z_2 - X^Z_1) & \det(X^Z_3 - X^Z_1) & \dots  & \det(X^Z_N - X^Z_1) \\
   \mu\det(X^Z_1 - X^Z_2) & 0 & \det(X^Z_3 - X^Z_2) & \dots  & \det(X^Z_N - X^Z_2) \\
    \vdots & \vdots & \vdots & \ddots & \vdots \\
    \mu\det(X^Z_1 - X^Z_N) & \mu\det(X^Z_2 - X^Z_N) & \mu\det(X^Z_3 - X^Z_N) & \dots  & 0
\end{bmatrix}.
\]
\end{Def}

\begin{prop}\label{p:laszlo1}
A set $\{X_1, \ldots, X_N\}\subset \R^{n\times m}$ induces a $T_N$ configuration if and only if there is a real $\mu >1$ and a vector $\lambda\in \R^N$ with positive components such that
\[
A^\mu_Z \lambda = 0 \qquad \forall Z \in \mathcal{A}_2\, .
\]
Moreover, if we define the vectors $t^i$ as in \eqref{defnt}, the coefficients $k_i$ through \eqref{defnk} and the matrices $P$ and $C_i$ through \eqref{sum}, then $P, C_1, \ldots , C_N$ and $k_1, \ldots , k_N$ give a $T_N$ configuration induced by $\{X_1, \ldots , X_N\}$. 
\end{prop}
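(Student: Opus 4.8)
The plan is to reduce the general $n\times m$ statement to Sz\'ekelyhidi's $2\times 2$ characterization (Proposition \ref{PRO}, Corollary \ref{coranalog}) applied minor-by-minor. The key observation is that the condition in Definition \ref{Def_TN} for $\{X_1,\dots,X_N\}$ to induce a $T_N$ configuration is a system of \emph{linear} equations \eqref{form} together with the rank-one constraints (a) and the closing condition (b); and both the linear structure and the rank-one constraints interact nicely with taking $2\times 2$ minors. Concretely, if $P, C_1,\dots,C_N, k_1,\dots,k_N$ is a $T_N$ configuration inducing $\{X_1,\dots,X_N\}$, then for any fixed $Z=(I,J)\in\mathcal A_2$ the restricted matrices $P^Z, C_1^Z,\dots,C_N^Z$ satisfy $\sum_\ell C_\ell^Z = 0$, each $C_i^Z$ is a $2\times 2$ matrix of rank at most one (a minor of a rank-one matrix), and $X_i^Z = P^Z + C_1^Z+\dots+C_{i-1}^Z + k_i C_i^Z$ for the \emph{same} coefficients $k_i$. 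Hence $\{X_1^Z,\dots,X_N^Z\}$ induces a $T_N$ configuration in $\R^{2\times 2}$ (in the generalized sense of Definition \ref{Def_TN}, allowing degeneracies), so by Corollary \ref{c:converse}, with $\mu$ and $\lambda$ determined by formulas \eqref{e:recursive} and \eqref{e:formula_mu} from the $k_i$'s \emph{alone}, we get $A^\mu_Z\lambda = 0$. Since $\mu,\lambda$ do not depend on $Z$, this proves the ``only if'' direction and simultaneously pins down $\mu$ and $\lambda$.

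For the converse, suppose $\mu>1$ and $\lambda$ with positive components satisfy $A^\mu_Z\lambda = 0$ for all $Z\in\mathcal A_2$. Define $t^i$ by \eqref{defnt}, the $k_i$ by \eqref{defnk}, and then $P$ and $C_1,\dots,C_{N-1}$ by solving the recursion \eqref{sum} in $\R^{n\times m}$, setting $C_N := -C_1-\dots-C_{N-1}$. By linearity of \eqref{sum}, restricting to a minor $Z$ gives exactly the recursion \eqref{sum} for the $2\times 2$ data $\{X_i^Z\}$ with the same $t^i$; so by Proposition \ref{PRO}/Corollary \ref{coranalog} applied in $\R^{2\times 2}$, the data $P^Z, C_1^Z,\dots,C_N^Z,k_i$ form a $T_N$ configuration inducing $\{X_i^Z\}$. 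In particular each $C_i^Z$ has $\det C_i^Z = 0$. The algebraic identities \eqref{form} for the full matrices hold by construction (they are just \eqref{sum} rearranged), and $\sum_\ell C_\ell = 0$ by definition of $C_N$; the only thing left to check is condition (a), namely $\rank C_i \le 1$ for each $i$. But a matrix all of whose $2\times 2$ minors vanish has rank at most one, and we have just shown $\det C_i^Z = 0$ for every $Z\in\mathcal A_2$. This closes the argument.

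The step that requires the most care — and which I expect to be the main (albeit modest) obstacle — is verifying that the restriction operation $M\mapsto M^Z$ genuinely commutes with all the structure: that $(u\otimes n)^Z$ is again rank $\le 1$ (immediate, it is $u_I\otimes n_J$), that the linear recursion \eqref{sum} restricts correctly (immediate by linearity of each entry), and crucially that the coefficients $k_i$, hence $\mu$ and $\lambda$ via \eqref{e:recursive}–\eqref{e:formula_mu}, are the \emph{same} for every minor $Z$ — this is what makes a single $\lambda$ work across all $Z\in\mathcal A_2$ and is exactly why the statement is phrased with one common $\mu,\lambda$. One should also note that Corollary \ref{coranalog} and Corollary \ref{c:converse} are stated for $\R^{2\times 2}$ in the generalized (possibly degenerate) sense matching Definition \ref{Def_TN}, so no nondegeneracy is needed anywhere; the rank-one conclusion $\rank C_i\le 1$ is obtained from the vanishing of \emph{all} $2\times2$ minors rather than assumed. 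Finally, one remarks that $\mathcal A_2$ suffices (one need not look at $r\times r$ minors for $r>2$): rank $\le 1$ is already detected by $2\times 2$ minors, and the linear relations \eqref{form} are minor-agnostic.
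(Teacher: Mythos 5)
Your argument is correct and follows essentially the same route as the paper: reduce to $2\times 2$ minors and invoke Sz\'ekelyhidi's $\R^{2\times 2}$ characterization (Proposition \ref{PRO}, Corollaries \ref{coranalog} and \ref{c:converse}), using that the $k_i$'s, hence $\mu$ and $\lambda$, are the same for every $Z$. The only (harmless, and arguably cleaner) presentational difference is in the ``if'' direction, where you define $P,C_i$ once in $\R^{n\times m}$ via \eqref{sum} and then restrict, whereas the paper defines $P(Z),C_i(Z)$ per minor and verifies compatibility of entries across different $Z$.
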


For this reason and in view of Remark \ref{r:invert}, we can introduce the following terminology:

\begin{Def}\label{d:defining_vector}
Given a $T_N$-configuration $P, C_1, \ldots , C_N$ and $k_1, \ldots, k_N$ we let $\mu$ and $\lambda$ be given by 
\eqref{e:recursive} and \eqref{e:formula_mu} and we call $(\lambda, \mu)\in \mathbb R^{N+1}$ the \emph{defining vector} of the $T_N$ configuration. 
\end{Def}

\begin{proof}[Proof of Proposition \ref{p:laszlo1}]
{\bf Direction $\Longleftarrow$.} Fix a set $\{X_1, \ldots, X_N\}$ of matrices with the property that there is a common $\mu>1$ and a common $\lambda$ with positive entries such that $A^\mu_Z \lambda =0$ for every $Z\in \mathcal{A}_2$. For each $Z$ we consider the corresponding set $\{X_1^Z, \ldots , Z_N^Z\}$ and we use the formulas \eqref{defnt}, \eqref{defnk} and \eqref{sum} to find $k_1, \ldots , k_N$, $P (Z)$ and $C_i (Z)$ such that
\[
X_i^Z = P (Z) + C_1 (Z) + \ldots C_{i-1} (Z) + k_i C_i (Z)\, .
\]
Since the coefficients $k_i$ are independent of $Z$, the formulas give that the matrices $C_i (Z)$ (and $P(Z)$) are compactible, in the sense that, if $j\ell$ is an entry common to $Z$ and $Z'$, then $(C_i (Z))_{j\ell} = (C_i (Z'))_{j\ell}$. In particular there are matrics $C_i$'s and $P$ such that $C_i (Z) = C_i^Z$ and $P (Z) = P^Z$ and thus \eqref{form} holds. Moreover, we also know from Proposition \ref{PRO} that $\rank (C_i^Z)\leq 1$ for every $Z$ and thus $\rank (C_i)\leq 1$. We also know that $C_1^Z+ \ldots + C_N^Z =0$ for every $Z$ and thus $C_1 + \ldots + C_N=0$. 

{\bf Direction $\Longrightarrow$}. Assume $X_1, \ldots , X_N$ induce a $T_N$ configuration as in \eqref{form}. Then $X_1^Z, \ldots , X_N^Z$ induce a $T_N$ configuration with corresponding $P^Z, C_1^Z, \ldots, C_N^Z$ and $k_1, \ldots , k_N$, where the latter coefficients are independent of $Z$. Thus, by Corollary \ref{c:converse}, $A^\mu_Z \lambda =0$. 
\end{proof}

\subsection{Computing minors} We end this section with a further generalization, this time of 
\eqref{sumdet}: we want to extend the validity of it to any minor.
  
\begin{prop}\label{analogm}
Let $\{X_1, \ldots, X_N\}\subset \R^{n\times m}$ induce a $T_N$ configuration as in \eqref{form} with defining vector 
$(\lambda, \mu)$. Define the vectors $t^1,\dots,t^N$ as in \eqref{defnt} and for every $Z\in \mathcal{A}_r$ of order $r \leq \min \{n,m\}$ define the 
minor $\mathcal{S} : \R^{n\times m} \ni X \mapsto \mathcal{S} (X) := \det (X^Z)\in \R$. Then 
\begin{equation}\label{e:sum_minor}
\sum_{j = 1}^Nt_j^i \mathcal{S}(X_j) = \mathcal{S}\left(\sum_{j = 1}^Nt_j^iX_j\right) = \mathcal{S}(P + C_1 + \dots + C_{i - 1})\, .
\end{equation}
and $A^\mu_Z \lambda = 0$.
\end{prop}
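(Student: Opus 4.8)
The plan is to reduce everything to the $2\times 2$ situation handled by Corollary \ref{coranalog} together with Proposition \ref{p:laszlo1}, and then to leverage the multilinearity and alternating properties of the determinant. First, the assertion $A^\mu_Z\lambda = 0$ for every $Z\in\mathcal A_2$ is exactly the content of the $\Longrightarrow$ direction of Proposition \ref{p:laszlo1} (via Corollary \ref{c:converse}), so this part is free. For the main identity \eqref{e:sum_minor}, the second equality $\mathcal S(\sum_j t^i_j X_j) = \mathcal S(P + C_1 + \dots + C_{i-1})$ is immediate from \eqref{sum}, since that equation says precisely $\sum_j t^i_j X_j = P + C_1 + \dots + C_{i-1}$ (recall $\|t^i\|_1 = 1$ and the recursive definition of the $C_j$). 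So the whole content is the first equality $\sum_j t^i_j\mathcal S(X_j) = \mathcal S(\sum_j t^i_j X_j)$, i.e. that the minor $\mathcal S$ commutes with the particular convex-like combination $\sum_j t^i_j(\cdot)$.

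The key step is an induction on the order $r$ of the minor $\mathcal S$. The base case $r=2$ is precisely \eqref{sumdet} from Corollary \ref{coranalog}, applied to the $2\times 2$ matrices $X_j^Z$ (using that the $t^i$ defined here agree with those of the corollary, and that $A^\mu_Z\lambda=0$). For the inductive step, fix $Z=(I,J)$ with $|I|=|J|=r$ and expand $\det(X^Z)$ along, say, the first column of $X^Z$ (Laplace expansion): $\mathcal S(X) = \sum_{k=1}^r (-1)^{k+1} X_{i_k j_1}\,\mathcal S_k(X)$, where $\mathcal S_k$ is an $(r-1)\times(r-1)$ minor (the one obtained by deleting row $i_k$ and column $j_1$). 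The point is to write $X_j = P + C_1 + \dots + C_{i-1} + (X_j - (P+C_1+\dots+C_{i-1})) =: W_i + R_j$, where $W_i := \sum_\ell t^i_\ell X_\ell$ and each $R_j$ lies in the rank-one direction spanned by the relevant $C$'s along the arm. Actually the cleaner route: since $X_j - W_i$ is, for each fixed $i$, a scalar multiple of a single rank-one matrix direction (this is where the $T_N$ structure \eqref{form} enters — along the $i$-th arm all the $X_j - (P+C_1+\dots+C_{i-1})$ are rank-one multiples coming from the $C$'s, and one checks the needed proportionality), one can hope to run the argument columnwise.

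Here is the step I expect to be the main obstacle, and how I would handle it. The naive approach — "determinant is multilinear, so expand" — does not directly work, because $\sum_j t^i_j X_j$ is a genuine affine combination, not something that linearizes the full $r\times r$ determinant. The right mechanism is: for the fixed index $i$, the $N$ matrices $X_1^Z, \dots, X_N^Z$ together with $P^Z + C_1^Z + \dots + C_{i-1}^Z$ form (a truncation of) a $T_N$ configuration in $\R^{r\times r}$, and one uses that the $r\times r$ determinant, when restricted to an affine line in the direction of a rank-one matrix, is an affine function of the line parameter — more generally, along the codimension-one affine structure dictated by the arms, $\det$ behaves multilinearly in the "new" rank-one directions. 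Concretely I would: (i) reduce to showing $\mathcal S$ is affine along each segment $[P + C_1 + \dots + C_{i-1},\, X_i]$ and, crucially, that the "slopes" combine correctly; (ii) use Proposition \ref{p:laszlo1} to know $A^\mu_Z\lambda = 0$ for all $Z\in\mathcal A_2$, which by \eqref{sumdet} gives the $r=2$ minor identity for every pair of rows/columns; (iii) Laplace-expand the $r\times r$ minor along one column and apply the inductive hypothesis to each $(r-1)\times(r-1)$ cofactor, while handling the single column entry by the linearity of that entry in the combination $\sum_j t^i_j(\cdot)$; the cross terms that would obstruct a clean split are exactly the $2\times 2$ sub-minor relations, which vanish by the base case. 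The bookkeeping in (iii) — making sure that expanding a Laplace cofactor and then taking the $t^i$-combination commutes, given that the $t^i$-combination already equals the single matrix $P + C_1 + \dots + C_{i-1}$ — is the delicate part, but it is ultimately a finite linear-algebra identity driven entirely by the already-established $2\times 2$ case and the defining relations \eqref{form}, \eqref{sum}.
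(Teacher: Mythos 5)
There is a genuine gap, and I would flag two separate problems.

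First, the inductive step you propose rests on a false assertion. You claim that for a fixed $i$, each $X_j - (P + C_1 + \cdots + C_{i-1})$ is a scalar multiple of a single rank-one direction. That is true only for $j=i$, where the difference is $k_iC_i$. For other $j$ the difference is a sum such as $C_i + C_{i+1} + \cdots + k_jC_j$, which is generically of rank $\min(j-i+1, n, m)>1$. So the columnwise/rank-one structure you invoke to control the Laplace cofactors is not available, and the ``cross terms'' that you hope vanish ``by the base case'' are not controlled by $2\times 2$ minor identities: the determinant is multilinear in columns, not linear, and the error between $\det\left(\sum_j t^i_jX_j^Z\right)$ and $\sum_j t^i_j\det(X_j^Z)$ is a genuine higher-order covariance-type expression that does not reduce to $\mathcal A_2$ relations. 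You acknowledge the bookkeeping is delicate; I do not see how it closes as stated. Second, the statement asserts $A^\mu_Z\lambda=0$ for \emph{every} $Z\in\mathcal A_r$, not only $Z\in\mathcal A_2$. Your argument (via Corollary \ref{c:converse}/Proposition \ref{p:laszlo1}) only gives the $r=2$ case, and you never return to $r>2$.

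The paper's own proof avoids both issues by not doing any Laplace expansion or induction on $r$. After reducing to a single minor $\det(\,\cdot\,)$ by passing to $X_j^Z$, it uses the Matrix Determinant Lemma
$\det(A+B)=\det(A)+\langle\cof(A)^T,B\rangle$ for $\rank B\le 1$, applied telescopically along the polygon: with $P^{(\ell)}=P+C_1+\cdots+C_{\ell-1}$, one sets $v=\det(P)$, $v_\ell=\langle\cof(P^{(\ell)})^T,C_\ell\rangle=\det(P^{(\ell+1)})-\det(P^{(\ell)})$, $w_\ell=\det(X_\ell)$. This produces exactly the hypotheses $w_i = v + v_1 + \cdots + v_{i-1} + k_iv_i$ and $\sum_\ell v_\ell=0$ of the abstract Lemma \ref{l:linear}, which then delivers $\sum_j t^i_j\det(X_j)=\det(P^{(i)})$ in one shot. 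The rank-one hypothesis is used only on each individual arm $C_\ell$ (where it is true), never on the sums $X_j-P^{(i)}$. Finally, $A^\mu_Z\lambda=0$ for general $r$ is deduced \emph{from} \eqref{e:sum_minor} by translating to $Y_j=X_j-X_i$: then $\sum_j t^i_j\det(Y_j)=\det\bigl(-k_iC_i\bigr)=0$ since $C_i$ has rank $\le 1<r$. This is the piece you are missing, and it requires the minor identity you were trying to prove, so it cannot be disposed of ``for free'' at the outset. I would encourage you to keep your good observation that the determinant is affine along rank-one lines --- that is precisely what the Matrix Determinant Lemma encodes --- and to develop that directly via Lemma \ref{l:linear} rather than via Laplace expansion.
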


Fix any matrix $A\in \R^{m\times m}$. In the following we will denote by $\cof(A)$ the $m\times m$ matrix defined\footnote{Note that sometimes in the literature one refers to what we called $\cof(A)$ as the \emph{adjoint} of $A$, and the adjoint of $A$ would be $\cof(A)^T$. } as $$\cof(A)_{ij}:= (-1)^{i + j}\det(A^{j,i}),$$ where $A^{j,i}$ is the $m-1\times m - 1$ matrix obtained by eliminating from $A$ the $j$-th row and the $i$-th column. It is well-known that
\[
A\cof(A) = \cof(A)A = \det(A)\Id_{m}.
\] 
We will need the following elementary linear algebra fact, which in the literature is sometimes called Matrix Determinant Lemma:
\begin{lemma}\label{MDL}
Let $A,B$ be matrices in $\R^{m\times m}$, and let $\rank(B) \le 1$. Then,
\[
\det(A + B) = \det(A) + \langle\cof(A)^T,B\rangle
\]
\end{lemma}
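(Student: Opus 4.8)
\textbf{Proof plan for Lemma \ref{MDL} (Matrix Determinant Lemma).}

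The plan is to reduce the statement to the rank-one case $B = u \otimes v$ (i.e. $B_{ij} = u_i v_j$ for column vectors $u, v \in \R^m$), which covers all matrices of rank $\le 1$: if $B = 0$ the identity is trivial, and otherwise $B$ has exactly rank one and hence such a representation. So fix $B = u\otimes v$. I would first dispose of the degenerate case where $A$ is not invertible by a density/continuity argument: the set of invertible matrices is dense in $\R^{m\times m}$, both sides of the claimed identity $\det(A+B) = \det(A) + \langle \cof(A)^T, B\rangle$ are polynomial (hence continuous) in the entries of $A$ for fixed $B$, so it suffices to prove the identity when $A$ is invertible. (Alternatively one can run a single uniform argument with the characteristic-polynomial trick below, but splitting off the invertible case is cleaner.)

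Assuming $A$ invertible, the key computation is
\[
\det(A + u\otimes v) = \det(A)\det\bigl(\Id_m + A^{-1}(u\otimes v)\bigr) = \det(A)\,\bigl(1 + v^T A^{-1} u\bigr),
\]
where the last equality is the standard fact that $\det(\Id_m + p\otimes q) = 1 + q^T p$ (the matrix $\Id_m + p\otimes q$ acts as the identity on the hyperplane $q^\perp$ and multiplies the vector $p$-direction by $1 + q^T p$; or one expands the determinant directly). Now using $A^{-1} = \det(A)^{-1}\cof(A)$ (the adjugate formula $A\,\cof(A) = \cof(A)\,A = \det(A)\Id_m$, recalled just above the statement), we get
\[
\det(A)\bigl(1 + v^T A^{-1} u\bigr) = \det(A) + v^T \cof(A)\, u.
\]
It remains to identify $v^T \cof(A)\, u$ with $\langle \cof(A)^T, B\rangle = \tr\bigl((\cof(A)^T)^T B\bigr) = \tr(\cof(A)\, B)$. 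Since $B = u\otimes v$ we have $\tr(\cof(A)(u\otimes v)) = \sum_{i,j}\cof(A)_{ij} u_j v_i = v^T \cof(A)\, u$, which closes the chain of equalities.

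There is essentially no serious obstacle here: the whole proof is three short identities (multiplicativity of the determinant, $\det(\Id + p\otimes q) = 1 + q^T p$, and the adjugate identity) plus a bookkeeping check that the Hilbert–Schmidt pairing $\langle \cof(A)^T, B\rangle$ unwinds to $v^T\cof(A)u$. The only mild care needed is (i) the reduction to rank exactly one and the representation $B = u\otimes v$, and (ii) extending from invertible $A$ to all $A$ by the polynomial/density argument; I would write both explicitly but neither is deep. If one prefers to avoid the density step entirely, one can instead prove $\det(\Id_m + N) = 1 + \tr N$ for any rank-one $N$ directly (choosing a basis adapted to the image line of $N$) and apply it with $N = \cof(A)B/\det(A)$ only after the invertible reduction — so the density argument is the more economical route.
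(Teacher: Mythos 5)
The paper does not actually prove this lemma: it is stated as a ``well-known elementary fact'' immediately after the adjugate identity $A\cof(A)=\cof(A)A=\det(A)\Id_m$, with no argument given. So there is no paper proof to compare against.

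Your proof is correct and is the standard one. The reduction to $B=u\otimes v$ is fine, the density-and-polynomiality argument cleanly removes the invertibility assumption on $A$, and the chain $\det(A+u\otimes v)=\det(A)\det(\Id_m+A^{-1}(u\otimes v))=\det(A)(1+v^TA^{-1}u)=\det(A)+v^T\cof(A)u$ is correct given the paper's convention that $\cof(A)$ denotes the adjugate (so $A^{-1}=\det(A)^{-1}\cof(A)$). The final bookkeeping step $\langle\cof(A)^T,B\rangle=\tr(\cof(A)B)=\tr(\cof(A)(u\otimes v))=v^T\cof(A)u$ checks out as well. An equally short alternative, avoiding even the density step, is to observe that $t\mapsto\det(A+tB)$ is affine when $\rank(B)\le 1$ (by multilinearity in the columns of $A+tB$, any term quadratic or higher in $t$ involves two proportional columns and vanishes), and then read off the linear coefficient from Jacobi's formula; but your route is just as economical and fully rigorous.
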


Moreover, we need another elementary computation, which is essentially contained in \cite{LSR} and for which we report the proof at the end of the section for the reader's convenience.  

\begin{lemma}\label{l:linear}
Assume the real numbers $\mu>1$, $\lambda_1, \ldots , \lambda_N >0$ and $k_1, \ldots , k_N >1$ are linked by the formulas \eqref{defnk}. Assume $v, v_1, \ldots, v_N, w_1, \ldots , w_N$ are elements of a vector space satisfying the  relations
\begin{align}
w_i &= v + v_1 + \ldots + v_{i-1} + k_i v_i\\
0 &= v_1+ \ldots + v_N\, .
\end{align}
If we define the vectors $t^i$ as in \eqref{defnt}, then
\begin{equation}\label{e:sumfinale}
\sum_j t^i_j w_j = v + v_1 + \ldots + v_{i-1}\, .
\end{equation}
\end{lemma}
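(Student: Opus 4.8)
The statement is a purely scalar/linear algebra identity, so the plan is to substitute the recursion for the $w_j$'s into the left-hand side of \eqref{e:sumfinale} and bookkeep coefficients. Writing $w_j = v + v_1 + \cdots + v_{j-1} + k_j v_j$ and using that $\|t^i\|_1 = \sum_j t^i_j = 1$ (all entries of $t^i$ in \eqref{defnt} are positive, so the normalization forces $\xi_i = \mu(\lambda_1 + \cdots + \lambda_{i-1}) + (\lambda_i + \cdots + \lambda_N)$), the term $v$ comes out with coefficient $1$, and the whole problem reduces to understanding, for each $\ell \in \{1,\dots,N\}$, the scalar
\[
a_\ell := \sum_{j=\ell+1}^{N} t^i_j + k_\ell\, t^i_\ell,
\]
which is the coefficient of $v_\ell$ in $\sum_j t^i_j w_j$. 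Concretely I must show that $\sum_\ell a_\ell v_\ell = v_1 + \cdots + v_{i-1}$.

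\textbf{The algebraic core.} First I would extract from \eqref{defnk} two elementary identities: (1) for $1 \le \ell \le N-1$, clearing denominators in the formulas for $k_\ell$ and $k_{\ell+1}-1$ gives $(\mu-1)\lambda_\ell k_\ell = \mu(\lambda_1+\cdots+\lambda_\ell) + (\lambda_{\ell+1}+\cdots+\lambda_N) = (\mu-1)\lambda_{\ell+1}(k_{\ell+1}-1)$, hence $\lambda_\ell k_\ell = \lambda_{\ell+1}(k_{\ell+1}-1)$; and (2) $(\mu-1)\lambda_i(k_i-1) = \mu(\lambda_1+\cdots+\lambda_{i-1}) + (\lambda_i+\cdots+\lambda_N) = \xi_i$. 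Recalling from \eqref{defnt} that $t^i_j = \mu\lambda_j/\xi_i$ for $j < i$ and $t^i_j = \lambda_j/\xi_i$ for $j \ge i$, identity (1) shows at once that $a_\ell = a_{\ell+1}$ whenever $\ell$ and $\ell+1$ lie in the same regime (both $< i$ or both $\ge i$), since $a_\ell - a_{\ell+1} = t^i_{\ell+1} + k_\ell t^i_\ell - k_{\ell+1}t^i_{\ell+1} = k_\ell t^i_\ell - (k_{\ell+1}-1)t^i_{\ell+1}$ and the relevant $t^i$'s differ from $\lambda$'s by a common factor. Thus $a_\ell \equiv A$ on $\{1,\dots,i-1\}$ and $a_\ell \equiv B$ on $\{i,\dots,N\}$. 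Finally, at the jump $\ell = i-1 \to \ell = i$ one computes $a_{i-1} - a_i = t^i_i + k_{i-1}t^i_{i-1} - k_i t^i_i = \xi_i^{-1}\big(\lambda_i + \mu k_{i-1}\lambda_{i-1} - k_i\lambda_i\big)$, and substituting $k_{i-1}\lambda_{i-1} = \lambda_i(k_i-1)$ from (1) and then $(\mu-1)\lambda_i(k_i-1) = \xi_i$ from (2) collapses this to exactly $A - B = 1$.

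\textbf{Conclusion.} Having shown $a_\ell = B$ for $\ell \ge i$ and $a_\ell = B+1$ for $\ell < i$, the constraint $\sum_\ell v_\ell = 0$ gives
\[
\sum_{j} t^i_j w_j = v + \sum_{\ell} a_\ell v_\ell = v + B\sum_{\ell=1}^N v_\ell + \sum_{\ell=1}^{i-1} v_\ell = v + v_1 + \cdots + v_{i-1},
\]
which is \eqref{e:sumfinale}. The only real point of care — and the place where the bookkeeping could go wrong — is the two-regime structure of $t^i$ and getting the crossing term $a_{i-1}-a_i$ right; everything else is a direct manipulation of \eqref{defnk}. (This is, as noted, essentially the computation in \cite{LSR}.)
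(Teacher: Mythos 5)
Your proof is correct, and it is organized differently from the paper's. The paper first decomposes $\sum_j t^i_j w_j = \frac{1}{\xi_i}\bigl[\sum_j \lambda_j w_j + (\mu-1)\sum_{j<i}\lambda_j w_j\bigr]$ and then, for each piece separately, substitutes the formula \eqref{defnk} into the coefficient of $v_\ell$ and simplifies to an explicit constant (finding $a_\ell \equiv \mu/(\mu-1)$ for the full sum and $b_\ell \equiv b$ for the partial one, together with the bookkeeping identity $\xi_i = (\mu-1)b = 1+(\mu-1)c$). You instead work with $t^i$ directly, show by consecutive differences that the coefficient $a_\ell$ of $v_\ell$ in $\sum_j t^i_j w_j$ is constant on each of the two index regimes $\{\ell<i\}$ and $\{\ell\ge i\}$, and compute only the jump $a_{i-1}-a_i=1$, never the constants themselves; the constraint $\sum_\ell v_\ell = 0$ then absorbs the unknown constant. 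The engine of your argument is the telescoping relation $\lambda_\ell k_\ell = \lambda_{\ell+1}(k_{\ell+1}-1)$, which the paper never isolates — it re-expands \eqref{defnk} from scratch each time instead. Your route is leaner (only one sum, one jump, no explicit constants), while the paper's is more explicit about the values of the coefficients and is closer in form to the companion computation at the start of the proof of Theorem \ref{t:main}, which reuses the quantity $\mu/(\mu-1)$. Both proofs use the same information from \eqref{defnk} and the same final cancellation via $\sum_\ell v_\ell = 0$.
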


\begin{proof}[Proof of Proposition \ref{analogm}] 
Fix the $Z$ of the statement of the proposition. $X_1^Z, \ldots , X_N^Z$ induces $T_N$ with the same coefficients $k_1, \ldots k_N$.  This reduces therefore the statement to the case in which $m=n$, $Z = ((1, \ldots n), (1, \ldots , n))$ and the minor $\mathcal{S}$ is the usual determinant.  

We first prove \eqref{e:sum_minor}.
In order to do this we specialize \eqref{e:sumfinale} to $w_\ell = \det(X_\ell)$, $v = \det(P)$, $v_\ell =\langle \cof^T(P + C_1 + \dots + C_{\ell - 1}),C_\ell\rangle$. To simplify the notation set
\[
P^{(1)} = P, \text{ and } P^{(\ell)} =  P + C_1 + \dots + C_{\ell - 1}\qquad \forall\ell \in \{1,\dots, N + 1\}.
\]
We want to show that
\[
v + v_1 + \dots + v_{i - 1} = \det(P^{(i)}) \text{ and } v_1 + \dots + v_N = 0,
\]
and this would conclude the proof of \eqref{e:sum_minor} because of Lemma \ref{l:linear}. A repeated application of Lemma \ref{MDL} yields: 
\begin{align*}
&v + v_1 + \dots + v_{i - 1} = \underbrace{\underbrace{\det(P) + \langle \cof^T(P),C_1\rangle}_{\det(P^{(2)})} + \langle \cof^T(P^{(2)}),C_2\rangle}_{\det(P^{(3)})} +\\
& + \dots + \langle \cof^T(P^{(i)}),C_{i - 1}\rangle= \det(P^{(i)}) = \det(P + C_1 + \dots + C_{i - 1}).
\end{align*}
As a consequence of Lemma \ref{MDL}, we also have $v_\ell = \det(P^{(\ell + 1)}) - \det(P^{(\ell)})$. Therefore:
\begin{align*}
v_1 + \dots + v_N  = \sum_{\ell = 1}^N\left(\det(P^{(\ell + 1)}) - \det(P^{(\ell)})\right) = \det(P^{(N + 1)}) - \det(P^{(1)}).
\end{align*}
Since $\sum_\ell C_\ell =0$ and $\det(P^{(N +1)}) = \det(P + \sum_\ell C_\ell)$, the following holds
\[
\det(P^{(N + 1)}) - \det(P^{(1)}) = \det(P + \sum_\ell C_\ell) -\det(P) = \det(P) - \det(P) = 0,
\]
and the conclusion is thus reached.

To prove the second part of the statement notice that $A^\mu_Z\lambda = 0$ is equivalent to the following $N$ equations:
\[
\sum_{j = 1}^Nt_j^i\det(X_j - X_i) = 0\qquad\qquad\forall i \in \{1,\dots,N\}.
\]
Fix $i \in \{1,\dots,N\}$ and define matrices $Y_j := X_j - X_i, \forall j$. $\{Y_1,\dots,Y_N\}$ is still a $T_N$ configuration of the form 
\[
Y_i = P' + \sum_{\ell = 1}^{i - 1}C_\ell + k_iC_i,
\]
and $P' = -X_i$ (recall that $P = 0$). Apply now \eqref{e:sum_minor} to find that
\[
\sum_jt_j^i\det(X_j - X_i) = \sum_jt_j^i\det(Y_j) = \det\left(P' + \sum_{\ell = 1}^{i - 1}C_\ell\right) = \det\left(-X_i + \sum_{\ell = 1}^{i - 1}C_\ell\right) = \det(-k_iC_i) = 0
\]
and conclude the proof.
\end{proof}

\subsection{Proof of Lemma \ref{l:linear}} It is sufficient to compute separately $\sum_{j= 1}^Nt_j^1w_j=\sum_{j = 1}^N\lambda_jw_j$ and $\sum_{j=1}^{i - 1}\lambda_jw_j$. In fact,
\begin{equation}\label{goal}
\sum_j^Nt_j^iw_j =\frac{1}{\xi_i}\left[ \sum_{j = 1}^N\lambda_jw_j + (\mu - 1)\sum_{j=1}^{i - 1}\lambda_jw_j\right].
\end{equation}
We can write
\[
\sum_j\lambda_jw_j = v + a_1v_1 + \dots + a_Nv_N,
\]
being, $\forall \ell \in \{1,\dots,N\}, a_\ell = k_\ell\lambda_\ell + \dots + \lambda_N$. Recalling that the defining vector and the numbers $k_i$ are related through \eqref{defnk}, we compute
\begin{equation}\label{a}
\begin{split}
&a_\ell = k_\ell\lambda_\ell + \dots + \lambda_N = \frac{\mu \lambda_1 + \dots + \mu\lambda_{\ell} + \lambda_{\ell + 1} + \dots \lambda_N}{\mu - 1} + \lambda_{\ell + 1} +\dots + \lambda_N\\
& = \frac{\mu(\lambda_1 + \dots + \lambda_N)}{\mu - 1} = \frac{\mu}{\mu - 1} =: a.
\end{split}
\end{equation}
Hence
\[
\sum_{j = 1}^N\lambda_jw_j = v + \frac{\mu}{\mu - 1}(v_1 + \dots + v_N).
\]
On the other hand,
\[
\sum_{j = 1}^{i - 1}\lambda_jw_j = b_1v + b_2v_1 + \dots + b_iv_{i - 1},
\]
and
\begin{align*}
&b_1 = \lambda_1 + \dots + \lambda_{i - 1} =: c,\\
&b_\ell = k_\ell\lambda_\ell + \dots + \lambda_{i - 1} =\frac{\mu(\lambda_1 + \dots + \lambda_{\ell}) + \sum_{j = \ell + 1}^{N}\lambda_j + (\mu - 1) \sum_{j = \ell + 1}^{i - 1}\lambda_j }{\mu - 1}=\\
&=\frac{\mu( \sum_{j = 1}^{i - 1}\lambda_j) + \sum_{j = i}^{N}\lambda_j }{\mu - 1} =: b, \forall \ell\in\{2,\dots,i\}.
\end{align*}
Also, $$\xi_i = \|(\mu\lambda_1,\dots,\mu\lambda_{i - 1},\lambda_i,\dots,\lambda_N)\|_1= (\mu - 1)(\lambda_1 + \dots + \lambda_{i -1}) + 1 = (\mu - 1)b = 1 +(\mu - 1)c.$$ We can now compute \eqref{goal}:
\begin{align*}
&\frac{1}{\xi_i}\left[ \sum_{j = 1}^N\lambda_jw_j + (\mu - 1)\sum_{j=1}^{i - 1}\lambda_jw_j\right] =\\
&\frac{1}{\xi_i}\left[ v + a_1v_1 + \dots + a_Nv_N + (\mu - 1)(b_1v + b_2v_1 +\dots + b_iv_{i - 1})\right] =\\
&\frac{1}{\xi_i}\left[(\mu - 1)b(v + v_1 + \dots + v_{i - 1}) + a(v_1 + \dots + v_N)\right]=\\
&v + v_1 + \dots + v_{i - 1} + \frac{a}{(\mu - 1)b}(v_1 + \dots + v_N)
\end{align*}
We use the just obtained identity
\begin{equation}\label{sumfinale}
\sum_{j = 1}^{N}t_j^iw_j = v + v_1 + \dots + v_{i - 1} + \frac{a}{(\mu - 1)b}(v_1 + \dots + v_N)
\end{equation}
Using that $v_1+\ldots + v_N =0$ we conclude the desired identity.

\section{Inclusions sets relative to polyconvex functions}\label{PF}

In this section we consider the following question. Given a set of distinct matrices $A_i \in \R^{2n}\times \R^m$
\begin{equation}\label{inclusion}
A_i:=\left(
\begin{array}{c}
X_i\\
Y_i
\end{array}
\right)\, ,
\end{equation}
do they belong to a set of the form
\begin{equation}\label{e:inclusion2}
K'_f :=\left(
\begin{array}{c}
X_i\\
Df(X_i)\\
\end{array}
\right)
\end{equation}
for some strictly polyconvex function $f:\mathbb{R}^{n\times m}\to \mathbb{R}$?
Observe that $A_i \neq A_j,$ for $i \neq j$ if and only if $X_i\neq X_j$, for $i \neq j$. Below we will prove the following

\begin{prop}\label{p:convexity}
Let $f:\R^{n\times m}\to \R$ be a strictly polyconvex function of the form $f(X) =g(\Phi(X))$, where
$g \in C^1$ is strictly convex and $\Phi$ is the vector of all the subdeterminants of $X$, i.e.
\[
\Phi(X) = (X,v_1(X),\dots,v_{\min(n,m)}(X)),
\]
and $$v_s(X) = (\det(X_{Z_1}),\dots, \det(X_{Z_{\#\mathcal{A}_s}}))$$ for some fixed (but arbitrary) ordering of all the elements $Z\in\mathcal{A}_s$. If $A_i \in K'_f$ and $A_i \neq A_j$ for $i \neq j$, then $X_i$, $Y_i = D f(X_i)$ 
and $c_i = f (X_i)$ fulfill the following inequalities for every $i\neq j$:
\begin{multline}\label{finalemdim}
c_i - c_j +\langle Y_i,X_j - X_i\rangle
\\
 -\sum_{r = 2}^{\min(m,n)}\sum_{Z\in\mathcal{A}_r}d^i_{Z}\left(\langle{\cof}(X_i^Z)^T,X^Z_j - X^Z_i\rangle -\det(X_j^Z) + \det(X_i^Z)\right)<0, 
\end{multline}
where $d^i_Z = \partial_Zg(\Phi(X_i))$. 
\end{prop}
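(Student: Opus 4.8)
The plan is to exploit strict convexity of $g$ directly. Since $A_i \in K'_f$ with $f = g \circ \Phi$, the chain rule gives $Y_i = Df(X_i) = \sum_{\alpha} \partial_\alpha g(\Phi(X_i))\, D(\Phi_\alpha)(X_i)$, where the index $\alpha$ runs over the components of $\Phi$: the $nm$ linear entries of $X$ (whose differential is the identity, contributing the term $\langle Y_i, \cdot\rangle$ only through the linear part) together with, for each $r \geq 2$ and each $Z \in \mathcal{A}_r$, the minor $X \mapsto \det(X^Z)$, whose differential applied to an increment $H$ is $\langle \cof(X_i^Z)^T, H^Z\rangle$. So I would first record the explicit identity for $Df(X_i)$ in terms of the $d^i_Z := \partial_Z g(\Phi(X_i))$ and the cofactor matrices, so that for any matrix $M$,
\[
\langle Y_i, M - X_i\rangle = \langle (\text{linear part}), M - X_i\rangle + \sum_{r\geq 2}\sum_{Z}d^i_Z \langle \cof(X_i^Z)^T, M^Z - X_i^Z\rangle.
\]

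Second, I would write down strict convexity of $g$ at the point $\Phi(X_i)$ evaluated against $\Phi(X_j)$: since $g$ is strictly convex and $C^1$ and the points $\Phi(X_i) \neq \Phi(X_j)$ (which holds because $X_i \neq X_j$ already forces the first block to differ), we have the strict gradient inequality
\[
g(\Phi(X_j)) > g(\Phi(X_i)) + \langle Dg(\Phi(X_i)), \Phi(X_j) - \Phi(X_i)\rangle.
\]
Now $g(\Phi(X_i)) = f(X_i) = c_i$ and likewise $c_j = g(\Phi(X_j))$, and $\langle Dg(\Phi(X_i)), \Phi(X_j) - \Phi(X_i)\rangle$ expands, block by block, as $\langle Y_i^{lin}, X_j - X_i\rangle$ from the linear block plus $\sum_{r\geq 2}\sum_Z d^i_Z(\det(X_j^Z) - \det(X_i^Z))$ from the minor blocks. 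Here $Y_i^{lin}$ denotes the linear-block part of $Dg(\Phi(X_i))$; but note $\langle Y_i, X_j - X_i\rangle = \langle Y_i^{lin}, X_j - X_i\rangle + \sum_{r\geq2}\sum_Z d^i_Z\langle \cof(X_i^Z)^T, X_j^Z - X_i^Z\rangle$ by the first step. Substituting to eliminate $Y_i^{lin}$ in favour of $Y_i$ and rearranging the strict inequality $c_j > c_i + \langle Dg(\Phi(X_i)), \Phi(X_j)-\Phi(X_i)\rangle$ is exactly \eqref{finalemdim}.

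The third step is purely bookkeeping: carefully match the sign of each term against the claimed inequality, in particular checking that moving $c_j$ to the left and negating produces the stated $c_i - c_j + \dots < 0$, and that the cofactor terms combine into the grouped expression $\langle \cof(X_i^Z)^T, X_j^Z - X_i^Z\rangle - \det(X_j^Z) + \det(X_i^Z)$ with the correct overall minus sign in front of the $r$-sum. I would also note explicitly that no degeneracy issue arises: strictness of the inequality only requires $\Phi(X_i) \neq \Phi(X_j)$, which is automatic, and does not require the minors themselves to differ.

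The only mildly delicate point — and the one I'd treat as the "main obstacle," though it is more a matter of care than of difficulty — is organizing the indexing of $\Phi$ and its differential consistently, so that the identity $Df(X) = \sum_\alpha \partial_\alpha g(\Phi(X)) D\Phi_\alpha(X)$ is transcribed into the cofactor language without sign slips, using the classical fact $D(\det(\cdot^Z))(X_i)[H] = \langle \cof(X_i^Z)^T, H^Z\rangle$ (equivalently $\operatorname{tr}(\cof(X_i^Z) H^Z)$). Once that identity is fixed, the proposition is just the strict gradient inequality for $g$ rewritten in coordinates, and there is no further analytic content.
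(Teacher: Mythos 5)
Your proposal is correct and follows essentially the same route as the paper's proof: apply the strict gradient inequality for $g$ at $\Phi(X_i)$ against $\Phi(X_j)$, expand $Dg(\Phi(X_i))\cdot(\Phi(X_j)-\Phi(X_i))$ block-by-block using the chain rule $Df = D_Xg + \sum_{r,Z} d^i_Z\,\overline{\cof(X_i^Z)^T}$ to trade the linear block $D_Xg$ for $Y_i$ minus the cofactor terms, and rearrange. The only detail worth making explicit in a write-up is the observation you already flag — that $X_i\neq X_j$ forces $\Phi(X_i)\neq\Phi(X_j)$ via the first block, so strictness holds — which the paper leaves implicit.
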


The expressions in \eqref{finalemdim} can be considerably simplied when the matrices $X_1, \ldots , X_N$ induce a $T_N$ configuration.

\begin{lemma}\label{l:minors}
Assume $X_1,\dots, X_N$ induces a $T_N$ configuration of the form $\eqref{form}$ and associated vectors $t^i$, $i\in \{1,\dots,N\}$. Then, $\forall i\in\{1,\dots,N\}$, $\forall r \in\{2,\dots,\min(m,n)\}$, $\forall Z\in\mathcal{A}_r$,
\begin{equation}\label{fzero}
\sum_jt_j^i\left(\langle{\cof}(X_i^Z)^T,X_j^Z - X_i^Z\rangle -\det(X_j^Z) + \det(X_i^Z)\right) = 0.
\end{equation}
\end{lemma}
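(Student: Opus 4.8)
The plan is to reduce \eqref{fzero} to an application of Proposition \ref{analogm}. Fix $i$, $r$, and $Z \in \mathcal{A}_r$, and set $\mathcal{S}(X) := \det(X^Z)$, the corresponding $r\times r$ minor. The expression inside the sum in \eqref{fzero} is, term by term, a quantity attached to each $X_j$, so the sum splits into three pieces: $\langle \cof(X_i^Z)^T, \sum_j t_j^i X_j^Z\rangle - \langle \cof(X_i^Z)^T, (\sum_j t_j^i) X_i^Z\rangle$, then $-\sum_j t_j^i \mathcal{S}(X_j)$, and finally $(\sum_j t_j^i)\mathcal{S}(X_i)$. Since $\|t^i\|_1 = 1$ with all entries of $t^i$ of one sign (they are positive multiples of the positive $\lambda$'s), in fact $\sum_j t_j^i = 1$; I would record this normalization first.

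Next I would invoke Proposition \ref{analogm}, which gives both $\sum_j t_j^i \mathcal{S}(X_j) = \mathcal{S}(P + C_1 + \dots + C_{i-1})$ and $\sum_j t_j^i X_j = P + C_1 + \dots + C_{i-1}$ (the latter is \eqref{sum}); passing to the submatrix indexed by $Z$, also $\sum_j t_j^i X_j^Z = (P + C_1 + \dots + C_{i-1})^Z =: M^Z$, where I abbreviate $M := P + C_1 + \dots + C_{i-1}$. Substituting, the left side of \eqref{fzero} becomes
\[
\langle \cof(X_i^Z)^T, M^Z - X_i^Z\rangle - \det(M^Z) + \det(X_i^Z).
\]
Now I would use that $M = X_i - k_i C_i$ from \eqref{form}, so $M^Z - X_i^Z = -k_i C_i^Z$, and since $\rank(C_i) \le 1$ we have $\rank(C_i^Z) \le 1$; hence Lemma \ref{MDL} (the Matrix Determinant Lemma) applies to $M^Z = X_i^Z + (-k_i C_i^Z)$, giving $\det(M^Z) = \det(X_i^Z) + \langle \cof(X_i^Z)^T, -k_i C_i^Z\rangle = \det(X_i^Z) + \langle \cof(X_i^Z)^T, M^Z - X_i^Z\rangle$. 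Plugging this into the displayed expression makes everything cancel, yielding $0$, which is exactly \eqref{fzero}.

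I do not anticipate a serious obstacle here; the statement is essentially a repackaging of Proposition \ref{analogm} together with the Matrix Determinant Lemma. The one point requiring a little care is the bookkeeping with submatrices: one must check that taking the $Z$-submatrix commutes with the affine relations \eqref{form} and \eqref{sum} (it does, since these are linear identities entry by entry) and that $\rank(C_i^Z)\le 1$ follows from $\rank(C_i)\le 1$ (a submatrix of a rank-one matrix has rank at most one). A secondary subtlety is that Proposition \ref{analogm} is stated for $T_N$ configurations with a given defining vector $(\lambda,\mu)$; by Proposition \ref{p:laszlo1} and Remark \ref{r:invert}, any $T_N$ configuration of the form \eqref{form} has such a defining vector, so Proposition \ref{analogm} is genuinely applicable, and the vectors $t^i$ appearing in the statement of Lemma \ref{l:minors} are precisely those of \eqref{defnt} associated with that $(\lambda,\mu)$.
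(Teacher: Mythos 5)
Your argument is correct and follows essentially the same route as the paper: split the sum in \eqref{fzero}, apply Proposition \ref{analogm} (both for the minor identity and for \eqref{sum}), observe that $\sum_j t^i_j X_j^Z - X_i^Z = -k_i C_i^Z$, and close with the Matrix Determinant Lemma applied to $X_i^Z$ and $-k_i C_i^Z$. The bookkeeping remarks you add (that $\sum_j t^i_j = 1$, that taking $Z$-submatrices commutes with the affine relations, and that $\rank(C_i^Z)\le 1$) are sound and are indeed used implicitly in the paper's proof.
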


In particular combining \eqref{finalemdim} and \eqref{fzero} we immediately get the following:

\begin{Cor}\label{absurd}
Let $f$ be a strictly polyconvex function and let $A_1, \ldots , A_N$ be distinct elements of $K'_f$ with the additional property that $\{X_1, \ldots , X_N\}_i$ induces a $T_N$ configuration of the form \eqref{form} with defining vector $(\mu, \lambda)$. Then,
\begin{equation}\label{e:absurd}
c_i - \sum_{j}t_j^ic_j -k_i\langle Y_i,C_i\rangle< 0 ,
\end{equation}
where the $t^i$'s are given by \eqref{defnt}. 
\end{Cor}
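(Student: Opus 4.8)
The plan is to combine the inequality \eqref{finalemdim} of Proposition \ref{p:convexity} with the vanishing identity \eqref{fzero} of Lemma \ref{l:minors} by taking the convex combination of \eqref{finalemdim} with weights $t^i_j$ over $j$. Concretely, fix $i\in\{1,\dots,N\}$. For each $j\neq i$, inequality \eqref{finalemdim} holds, and multiplying it by $t^i_j>0$ and summing over $j$ (the term $j=i$ contributes zero since all expressions vanish at $j=i$) gives, since $\sum_j t^i_j = 1$,
\[
c_i - \sum_j t^i_j c_j + \Big\langle Y_i, \sum_j t^i_j X_j - X_i\Big\rangle - \sum_{r\ge 2}\sum_{Z\in\mathcal A_r} d^i_Z \sum_j t^i_j\big(\langle \cof(X_i^Z)^T, X_j^Z - X_i^Z\rangle - \det(X_j^Z) + \det(X_i^Z)\big) < 0\, .
\]
By Lemma \ref{l:minors} each inner sum over $j$ in the last term vanishes, so the entire double sum over $r$ and $Z$ drops out. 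It remains to identify the linear term: by \eqref{sum} in Corollary \ref{coranalog} (equivalently Proposition \ref{p:laszlo1}) we have $\sum_j t^i_j X_j = P + C_1 + \dots + C_{i-1}$, and from the $T_N$ relation \eqref{form}, $X_i = P + C_1 + \dots + C_{i-1} + k_i C_i$, hence $\sum_j t^i_j X_j - X_i = -k_i C_i$. Therefore $\langle Y_i, \sum_j t^i_j X_j - X_i\rangle = -k_i\langle Y_i, C_i\rangle$, which yields exactly \eqref{e:absurd}.

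The only subtlety worth checking is the treatment of the diagonal term $j=i$: inequality \eqref{finalemdim} is only asserted for $i\neq j$, but the expression on its left-hand side is identically zero when $j=i$ (each bracket vanishes), so including or excluding that index in the summation $\sum_j t^i_j(\cdots)$ makes no difference; this is why we may freely use $\sum_j t^i_j = 1$ and apply \eqref{fzero} and \eqref{sum}, both of which are stated as full sums over $j\in\{1,\dots,N\}$. Since $t^i_j>0$ for all $j$, the strict inequality is preserved under the convex combination. This is entirely routine; there is no real obstacle, the statement being an immediate algebraic consequence of the two preceding results once the linear term is rewritten via the $T_N$ structure.

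It is perhaps worth noting that the definition of $d^i_Z$ and the precise form of $\Phi$ in Proposition \ref{p:convexity} play no role here beyond guaranteeing that the coefficients $d^i_Z$ are well-defined scalars, so that multiplying \eqref{fzero} by $d^i_Z$ and summing over $Z\in\mathcal A_r$ and $r\in\{2,\dots,\min(m,n)\}$ is legitimate. The content of the corollary is thus the clean statement that, on a $T_N$ configuration lying in an inclusion set $K'_f$ of a strictly polyconvex $f$, the ``affine defect'' $c_i - \sum_j t^i_j c_j$ of the values $c_\ell = f(X_\ell)$ is strictly dominated by $k_i\langle Y_i, C_i\rangle$; this is the inequality that will later be shown to be incompatible with the extra structure coming from the div-curl part of a $T_N'$ configuration, namely with $\langle C_i, D_i\rangle = 0$ from Proposition \ref{p:T_N'_easy}(iii).
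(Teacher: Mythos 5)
Your proof is correct and follows the same route as the paper's: multiply \eqref{finalemdim} by $t^i_j$, sum over $j$, kill the minor sums with Lemma \ref{l:minors}, and rewrite the remaining linear term via \eqref{sum} and \eqref{form}. Your explicit handling of the diagonal term $j=i$ (which the paper passes over silently) is a welcome clarification; otherwise the argument coincides with the one given.
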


\subsection{Proof of Proposition \ref{p:convexity}}
The strict convexity of $g$ yields, for $i \neq j$,
\begin{equation}\label{mdim}
\langle Dg(\Phi(X_i)), \Phi(X_j) -\Phi(X_i)\rangle < g(\Phi(X_j)) - g(\Phi(X_i)).
\end{equation}
A simple computation shows that for the function $\det(\cdot):\R^{r\times r} \to \R$:
\[
D(\det(X))|_{X = Y} = \cof(Y)^T.
\]
In the following equation, we will write, for an $n\times m$ matrix $M$ and for $Z\in \mathcal{A}_r$, $\overline{{\cof}(M^Z)^T}$ to denote the $n\times m$ matrix with $0$ in every entry, except for the rows and columns corresponding to the multiindex $Z= (I, J)$, which will be filled with the entries of the matrix $\cof(M^Z)^T \in \R^{r\times r}$, namely, if $i\notin I$ or $j\not\in J$, then 
$(\overline{{\cof}(M^Z)^T})_{ij} = 0$ and, if we eliminate all such coefficients, the remaining $r\times r$ matrix equals $\cof (M^Z)^T$. Moreover, we will identify the differential of a map from $\R^{n\times m}$ to $\mathbb{R}$ with the obvious associated matrix.
We thus have the formula
\begin{align*}
Df(X) = D(g(\Phi(X))) = D_Xg(\Phi(X)) +\sum_{r = 2}^{\min(m,n)}\sum_{Z\in\mathcal{A}_r} \partial_Zg(\Phi(X))\overline{{\cof}(X^Z)^T}
\end{align*}
When evaluated on $X=X_i$, 
\[
Y_i = D_Xg(\Phi(X_i)) + \sum_{r = 2}^{\min(m,n)}\sum_{Z\in\mathcal{A}_r} \partial_Zg(\Phi(X_i))\overline{{\cof}(X_i^Z)}
\]
In order to simplify the notation set now $d^i_Z := \partial_Zg(\Phi(X_i))$. The previous expression yields:
\[
\begin{split}
\langle Dg(\Phi(X_i)),& \Phi(X_j) -\Phi(X_i)\rangle\nonumber\\ 
= & \langle D_Xg(\Phi(X_i)), X_j - X_i\rangle  + \sum_{r = 2}^{\min(m,n)}\sum_{Z\in\mathcal{A}_r}d^i_{Z}\left(\det(X_j^Z) - \det(X_i^Z)\right) \\
= &\left\langle Y_i - \sum_{r = 2}^{\min(m,n)}\sum_{Z\in\mathcal{A}_r} d^i_Z\overline{{\cof}(X_i^Z)^T},X_j - X_i\right\rangle + \sum_{r = 2}^{\min(m,n)}\sum_{Z\in\mathcal{A}_r}d^i_{Z}\left(\det(X_j^Z) - \det(X_i^Z)\right).
\end{split}
\]
Since
\[
g(\Phi(X_j)) - g(\Phi(X_i)) = f(X_j) - f(X_i) = c_j - c_i,
\]
\eqref{mdim} becomes:
\begin{align*}
\langle Y_i,X_j - X_i\rangle - \sum_{r = 2}^{\min(m,n)}\sum_{Z\in\mathcal{A}_r}d^i_{Z}\left(\langle\overline{{\cof}(X_i^Z)^T},X_j - X_i\rangle -\det(X_j^Z) + \det(X_i^Z)\right) < c_j - c_i.
\end{align*}
Finally, summing $c_i - c_j$ on both sides:
\begin{equation}
\begin{split}
& c_i - c_j +\langle Y_i,X_j - X_i\rangle  - \sum_{r = 2}^{\min(m,n)}\sum_{Z\in\mathcal{A}_r}d^i_{Z}\left(\langle\overline{{\cof}(X_i^Z)^T},X_j - X_i\rangle -\det(X_j^Z) - \det(X_i^Z)\right) < 0
\end{split}
\end{equation}
Using the fact that $\langle\overline{{\cof}(X_i^Z)^T},X_j - X_i\rangle = \langle {\cof}(X_i^Z)^T,X_j^Z - X_i^Z\rangle$, we see that the previous inequality implies the conclusion
\begin{equation*}
\begin{split}
&\forall i \neq j,\\
&c_i - c_j +\langle Y_i,X_j - X_i\rangle  -\sum_{r = 2}^{\min(m,n)}\sum_{Z\in\mathcal{A}_r}d^i_{Z}\left(\langle{\cof}(X_i^Z)^T,X_j^Z - X_i^Z\rangle -\det(X_j^Z) + \det(X_i^Z)\right) <0.
\end{split}
\end{equation*}

\subsection{Proof of Lemma \ref{l:minors}}
The result is a direct consequence of  Lemma \ref{MDL} and Proposition \ref{analogm}. 
First of all, by Proposition \ref{analogm} we have
\begin{equation}\label{e:fzerohilfe1}
\sum_jt_j^i\det(X_j^Z) = \det\left(\sum_jt_j^iX_j^Z\right) = \det\left(P_1^Z + \dots + C_{i - 1}^Z\right)
\end{equation}
Moreover, by \eqref{sum}, we get
\begin{equation}\label{e:fzerohilfe2}
 \sum_jt_j^i\langle {\cof}(X_i^Z)^T,X_j^Z - X_i^Z\rangle = \langle {\cof}(X_i^Z)^T,P^Z +C^Z_1 +\dots + C^Z_{i - 1} - X_i^Z\rangle = -k_i\langle{\cof}(X_i^Z)^T,C_i^Z\rangle\, .
 \end{equation}
Finally, apply Lemma \ref{MDL} to $A= X_i^Z$ and $B= - k_i C_i^Z$ to get
\begin{equation}\label{e:fzerohilfe3}
\det\left(P^Z + \dots + C_{i - 1}^Z\right) = \det (X_i^Z) - k_i\langle\cof(X_i^Z)^T, C_i^Z \rangle\, .
\end{equation}
These three equalities together give \eqref{fzero}. 

\subsection{Proof of Corollary \ref{absurd}} Multiply \eqref{finalemdim} by $t^i_j$ and sum over $j$. Using Lemma \ref{l:minors} and taking into account $\sum_j t^i_j=1$ we get
\[
c_i - \sum_j t^i_j  c_j + \left\langle Y_i, \sum_j t^i_j X_j - X_i \right\rangle < 0\, . 
\]
Since
\[
\sum_j  t^i_j X_j = P + C_1 + \ldots + C_{i-1}
\]
and
\[
X_i = P + C_1 + \ldots + C_{i-1} + k_i C_i\, ,
\]
we easily conclude \eqref{e:absurd}.

\section{Proof of Theorem \ref{t:main}}\label{MT}

In this section we prove the main theorem of this paper. 

\subsection{Gauge invariance} In the first part we state a corollary of some obvious invariance of polyconvex functions under certain groups of transformations. This invariance will then be used in the proof of Theorem \ref{t:main} to bring an hypothetical $T'_N$ configuration into a ``canonical form''.

\begin{lemma}\label{simplification}
Let $f: \R^{n\times m}$ be strictly polyconvex and assume that $K_f$ contains a set of matrices $\{A_1, \ldots , A_N\}$ which induces a nondegenerate $T'_N$ configuration, denoted by 
\[
A_i :=
\left(
\begin{array}{c}
\bar{X}_i\\
\bar{Y}_i\\
\bar{Z}_i
\end{array}
\right),
\]
where
\begin{align*}
\bar{X}_i = P + \bar{C}_1 + \dots + k_i\bar{C}_i,\\
\bar{Y}_i = Q + \bar{D}_1 + \dots + k_i\bar{D}_i,\\
\bar{Z}_i = R + \bar{E}_1 + \dots + k_i\bar{E}_i.
\end{align*}
Then, for every $S,T \in \R^{n\times m}, a\in \R$, there exists another strictly polyconvex function $\bar f$ such that the family of matrices 
\[
B_i :=
\left(
\begin{array}{c}
X_i\\
Y_i\\
Z_i
\end{array}
\right)
\]
lie in $K_{\bar f}, \forall i$, and they have the following properties: 
\begin{itemize}
\item The matrices $X_i$, $Y_i$ have the form
\begin{align*}
&X_i = S +\bar{C}_1 + \dots + k_i\bar{C}_i,\\
&Y_i = T + \bar{D}_1 + \dots + k_i\bar{D}_i.
\end{align*}
\item the matrices $Z_i$ are of the form
\[
Z_i = U + E_1 + \dots + k_iE_i,
\]
where $$U = R - P^T( Q - T) + (S - P)^TN T+ (\langle P, Q - T\rangle + a)\id.$$ Moreover, if $n_i \in \R^m$ is the unit vector of Proposition \ref{p:T_N'_easy}, then we have
\begin{align}
&\sum_i E_i = 0, \label{firstreq}\\
&\sum_jt_j^i Z_i = U + E_1 + \dots + E_{i - 1}, \forall i\in\{1,\dots,  N\}\label{secreq},\\
&E_in_i = 0,\forall i \label{thirreq}.
\end{align}
\end{itemize}
\end{lemma}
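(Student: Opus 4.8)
The plan is to exhibit the new polyconvex function $\bar f$ explicitly as a composition of the original $f$ with an affine change of variables in the domain, plus the addition of a suitable affine function. Concretely, I would set $\bar f(X) := f(X + (P - S)) + \langle L, X\rangle + a'$ for an appropriate linear form $L \in \R^{n\times m}$ and constant $a'$, chosen so that the ``outer variation'' block $Df$ and the constant term $f$ transform in the prescribed way. Polyconvexity is preserved because $X \mapsto X + (P-S)$ maps the vector of subdeterminants $\Phi$ to an affine image of itself (the minors of $X + \text{const}$ are polynomials in the minors of $X$ and in the constant matrix, but more to the point $\Phi$ composed with a translation is of the form $\mathcal{L}\circ\Phi + \text{affine}$ for a suitable affine map respecting convexity), and adding an affine function of $X$ keeps $g$ convex; strictness is untouched. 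I would then read off that if $A_i \in K_f$ then $B_i := (X_i, Y_i, Z_i) \in K_{\bar f}$ where $X_i = \bar X_i + (S - P)$, which gives exactly $X_i = S + \bar C_1 + \dots + k_i \bar C_i$ since the translation only shifts $P$ to $S$ and leaves the arms $\bar C_j$ and coefficients $k_i$ alone. The same bookkeeping for $Y_i = Df(X_i) = Df(\bar X_i) + L = \bar Y_i + L$ forces the choice $L = T - Q$, yielding $Y_i = T + \bar D_1 + \dots + k_i \bar D_i$ with $D_i = \bar D_i$.

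The main work is then the third block. By definition of $K_{\bar f}$ we have $Z_i = X_i^T D\bar f(X_i) - \bar f(X_i)\,\id$. I would substitute $X_i = \bar X_i + (S-P)$, $D\bar f(X_i) = \bar Y_i + (T - Q)$, and $\bar f(X_i) = f(\bar X_i) + \langle T - Q, \bar X_i\rangle + a'$, and expand. Using that $\bar Z_i = \bar X_i^T \bar Y_i - f(\bar X_i)\,\id$ (since $A_i \in K_f$), the expansion should collect into $\bar Z_i$ plus terms that are affine in the data $\bar X_i, \bar Y_i$. Grouping the constant (in $i$) part versus the part depending on $i$ through $\bar X_i, \bar Y_i$, one defines $U$ as the constant part and $E_i$ as the remaining increments; matching constants pins down $a'$ in terms of $a$. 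The claimed formula $U = R - P^T(Q-T) + (S-P)^T N T + (\langle P, Q - T\rangle + a)\id$ should drop out of this computation (I suspect the ``$N$'' in the stated formula is a typo for a transpose or an identity and will be clarified by the algebra; I would present the honest result of the expansion).

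For the three displayed properties (\ref{firstreq})--(\ref{thirreq}): since $Z_i$ is an affine function of the triple $(\bar X_i, \bar Y_i)$ — say $Z_i = \Psi(\bar X_i, \bar Y_i)$ for an affine $\Psi$ whose linear part I will have written down — and since the arm vectors $(\bar C_j, \bar D_j, \bar E_j)$ lie in $\Lambda_{dc}$, I would define $E_j$ as the image under the linear part of $\Psi$ of $(\bar C_j, \bar D_j)$, suitably symmetrized so that $Z_i = U + E_1 + \dots + k_i E_i$ holds exactly; the key point is that the geometric arrangement (the coefficients $k_i$ and the partial-sum structure) is inherited verbatim from the arrangement of $(\bar X_i, \bar Y_i)$ because affine maps send $T_N$-type arrangements to $T_N$-type arrangements. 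Then $\sum_i E_i = 0$ follows from $\sum \bar C_j = \sum \bar D_j = 0$ and linearity, which is (\ref{firstreq}); equation (\ref{secreq}) follows by applying Lemma \ref{l:linear} (or directly Corollary \ref{coranalog}/\eqref{sum}) with $v = U$, $v_j = E_j$, $w_j = Z_j$, exactly as in the proof of Proposition \ref{analogm}; and (\ref{thirreq}), $E_i n_i = 0$, follows because $E_i$ is built from $\bar C_i = u_i \otimes n_i$, $\bar D_i$ (with $\bar D_i n_i = 0$), $\bar E_i$ (with $\bar E_i n_i = 0$) via an expression of the form $E_i = (\text{const})^T \bar D_i + \bar C_i^T(\text{const}) + \bar E_i + (\text{scalar involving } \langle\cdot,\bar C_i\rangle + \langle \cdot, \bar D_i\rangle)\id$, and each summand kills $n_i$ — the rank-one term $\bar C_i^T(\cdot)$ does because $\bar C_i^T = n_i \otimes u_i$ would not kill $n_i$, so here I must be careful: the correct statement is that $E_i n_i = 0$ because the combination is arranged so that $\bar C_i$ enters only through $\bar C_i^T$ acting on vectors orthogonal to... — this is precisely the delicate point.

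The step I expect to be the main obstacle is verifying $E_i n_i = 0$: one has to track exactly how the rank-one matrix $\bar C_i = u_i\otimes n_i$ enters the transformed third block and confirm that, after the dust settles, every occurrence is of a form annihilated on the right by $n_i$ — this uses crucially both $\bar D_i n_i = 0$, $\bar E_i n_i = 0$, and the orthogonality relation $\langle \bar C_i, \bar D_i\rangle = 0$ from Proposition \ref{p:T_N'_easy}(iii). The rest is careful but routine affine bookkeeping.
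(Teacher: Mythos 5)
Your plan is the same one the paper uses: replace $f$ by a conjugate $\bar f$ obtained from $f$ by a translation of the argument (by $P-S$), the addition of a linear form $\langle T-Q,\cdot\rangle$ and a constant, then track how each of the three blocks of $K_f$ transforms. The bookkeeping you sketch for $X_i$ and $Y_i$ is exactly right, and the claim that the arrangement $(k_i, \text{partial sums})$ is inherited because the transformation is affine is also the paper's argument for \eqref{firstreq}--\eqref{secreq} (via Lemma~\ref{l:linear}/identity \eqref{sumfinale}). Your suspicion that the ``$N$'' in the displayed formula for $U$ is a typo is correct: with $V=Q-T$ and $O=S-P$ the paper arrives at $U = R - P^T V + O^T T + (\langle P,V\rangle + a)\id$.

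Where you go astray is at the step you yourself flag as delicate. The cancellation that gives $E_i n_i = 0$ does \emph{not} use $\langle \bar C_i,\bar D_i\rangle = 0$ at all, and you should not need it. After simplification the paper's formula for the new arm is
\[
E_i \;=\; \bar E_i \;-\; \bar C_i^{\,T} V \;+\; \langle \bar C_i, V\rangle \,\id \;+\; O^T \bar D_i\,,
\qquad V = Q-T,\ O = S-P\,.
\]
Applying this to $n_i$: the terms $\bar E_i n_i$ and $O^T \bar D_i n_i$ vanish by Proposition~\ref{p:T_N'_easy}(ii). For the remaining two terms, write $\bar C_i = u_i \otimes n_i$, so $\bar C_i^T = n_i u_i^T$ and, with $|n_i|=1$,
\[
\bar C_i^{\,T} V\, n_i \;=\; n_i\,(u_i^T V n_i) \;=\; \langle \bar C_i, V\rangle\, n_i\,,
\]
using $\langle \bar C_i, V\rangle = \tr(n_i u_i^T V) = u_i^T V n_i$. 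Hence $(-\bar C_i^T V + \langle\bar C_i,V\rangle\id)\,n_i = 0$ for \emph{any} matrix $V$, purely from the rank-one structure of $\bar C_i$. The source of the compensating $\langle\bar C_i,V\rangle\id$ term is the $-\bar f(X_i)\id$ piece in the definition of $Z_i$: when you expand $X_i^T Y_i - \bar f(X_i)\id$ in terms of the barred quantities, the scalar correction to $\bar f(X_i)$ produces exactly the trace term that restores $E_i n_i = 0$. So the ``delicate point'' resolves cleanly once you keep that $\id$-term in play, and Proposition~\ref{p:T_N'_easy}(iii) is not invoked.
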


\begin{proof}
We consider $\bar f$ of the form
\[
f(X) = \bar f(X + O) + \langle X, V\rangle + a.
\]
We want $X_i - \bar{X}_i = S - P$ and $Y_i - \bar{Y}_i = T - Q$, therefore the natural choice for $O$ is $O := -P + S$. In this way,
\[
f(\bar{X_i}) = \bar f(\bar{X}_i - P + S) + \langle\bar{X}_i,V\rangle + a = \bar f(X_i) + \langle\bar{X}_i,V\rangle + a.
\]
Moreover, we have
\[
Df(\bar{X}_i) = \bar{Y}_i,
\]
hence $D\bar f({X}_i) = Y_i $ if and only if
\[
Df(\bar{X}_i) - V = \bar{Y}_i - V = Y_i,
\]
i.e. $V := Q - T$. We now show that the modification of $\bar{Z}_i$ into $Z_i$ with the properties listed in the statement of the present proposition will let us fulfill also the last requirement, namely that
\[
Z_i = X_i^TY_i - c'_i\id,
\]
where $c'_i = \bar f(X_i)$. Analogously, we denote with $c_i = f(\bar{X}_i)$. We write
\begin{align*}
&X_i^TY_i - c'_i\id = (X_i - O)^TY_i + O^TY_i - (c_i - \langle \bar{X}_i,V\rangle - a)\id =\\
&\bar{X}_i^T(Y_i + V) - \bar{X}_i^TV + O^TY_i - (c_i - \langle \bar{X}_i,V\rangle - a)\id =\\
&\underbrace{\bar{X}_i^T\bar{Y}_i - c_i\id}_{\bar{Z_i}} - \bar{X}_i^TV + O^TY_i + (\langle \bar{X}_i,V\rangle + a)\id.
\end{align*}

We can thus rewrite
\[
\bar{X}_i^TV = P^TV + \sum_{j = 1}^{i - 1}C_j^TV + k_iC_i^TV.
\]
For every fixed $j$, we decompose in a unique way $V = V_j + V^\perp_j$, where $V_j^\perp  = (Vn_j)\otimes n_j$ and \(V_j=V-V_j^\perp\). Note that, since \(C_j=u_j\otimes v_j\),  this implies that $C_j^TV_j-\langle C_j,V_j\rangle\id $ is a scalar multiple of the orthogonal projection on $\spann(n_j)^\perp$. Therefore,
\[
C_j^TV = C_j^TV_j + C_j^TV_j^\perp = \underbrace{C_j^TV_j - \langle C_j,V_j\rangle\id + C_j^TV_j^\perp}_{=: R_j} + \langle C_j,V_j\rangle\id.
\]
Consequently, $\bar{X}_i^TV$ has the following form:
\[
\bar{X}_i^TV = P^TV + \sum_{j = 1}^{i - 1}R_j + k_iR_i + \left(\sum_{j=1}^{i - 1}\langle C_j,V_j\rangle + k_i\langle C_i,V_i\rangle\right)\id.
\]
Finally, we define, $Z_i' := P^TV + \sum_{j = 1}^{i - 1}R_j + k_iR_i$. Resuming the main computations, we have obtained that:
\[
X_i^TY_i - c'_i\id  = \bar{Z_i} - Z_i' + O^TY_i + (-\sum_{j = 1}^{i - 1}\langle C_j,V_j\rangle - k_i\langle C_i,V_i\rangle + \langle \bar{X}_i,V\rangle + a)\id.
\]
Since  $$-\sum_{j = 1}^{i - 1}\langle C_j,V_j\rangle - k_i\langle C_i,V_i\rangle + \langle \bar{X}_i,V\rangle =-\sum_{j = 1}^{i - 1}\langle C_j,V\rangle - k_i\langle C_i,V\rangle + \langle \bar{X}_i,V\rangle= \langle P,V \rangle,$$ we are finally able to say that the first part of the Proposition is proved provided that
\begin{align*}
&Z_i := \bar{Z_i} - Z_i' + O^TY_i + (\langle P,V\rangle + a)\id,\\
&E_i := \bar{E_i} - C_i^TV_i + \langle C_i,V_i\rangle\id - C_i^TV_i^\perp + O^TD_i,\\
&U:= R - P^TV + O^TT + (\langle P,V\rangle + a)\id.
\end{align*}
To simplify future computations, let us use the identities $V_i + V_i^\perp = V$ and $ \langle C_i,V_i\rangle\ =  \langle C_i,V\rangle$:
\begin{align*}
&Z_i := \bar{Z_i} - Z_i' + O^TY_i + (\langle P,V\rangle + a)\id,\\
&E_i := \bar{E_i} - C_i^TV + \langle C_i,V\rangle\id + O^TD_i,\\
&U:= R - P^TV +  O^TT + (\langle P,V\rangle + a)\id.
\end{align*}
Properties \eqref{firstreq}-\eqref{secreq}-\eqref{thirreq} are easily checked by the linearity of the previous expressions and the identity \eqref{sumfinale}.
\end{proof}

\subsection{Proof of Theorem \ref{t:main}}
Assume by contradiction the existence of a $T_N'$ configuration induced by matrices $\{A_1, \ldots , A_N\}$ which belong to the inclusion set $K_f$ of some stictly polyconvex function $f\in C^1 (\R^{n\times m})$. Note that the corresponding $\{X_i\}$ must be all distinct, because $Y_i = Df (X_i)$ and $Z_i = X_i^T Df (X_i) - f(X_i) {\rm id}$. Thus $\{X_1, \ldots , X_N\}$ induce a $T_N$ configuration.

We consider coefficients $k_1, \ldots , k_N$ and matrices $P, Q, R$, $C_i, D_i, E_i$ as in Definition \ref{Def_TN'}. By Lemma \ref{simplification} we can assume, without loss of generality, that 
\[
P = 0 = Q \text{ and } \tr(R) = 0\, .
\]
We are now going to prove that the system of inequalities
\begin{equation}\label{maininequalities}
-\nu_i := c_i - \sum_{j}t_j^ic_j -k_i\langle Y_i,C_i\rangle< 0, \forall i \, ,
\end{equation}
where $c_i$ and $t_j^i$ are as in Corollary \ref{absurd}, cannot be fulfilled at the same time. This will then give a contradiction. In order to follow our strategy, we need to compute the following sums:

\begin{equation}\label{finalsum?}
\sum_jt_j^iZ_j = \sum_jt_j^iX^T_jY_j - \sum_jt_j^ic_j\id.
\end{equation}

Let us start computing the sum for $i = 1$, $\sum_j\lambda_jX_j^TY_j.$ We rewrite it in the following way:

\begin{equation}\label{quadsum}
\begin{split}
 \sum_j\lambda_j X_j^TY_j =& \sum_{j = 1}^N\lambda_j\left(\sum_{1\le a,b\le j - 1}C_a^TD_b + k_j\sum_{1\le a \le j - 1} C_a^TD_j + k_j\sum_{1\le b \le j - 1} C_j^TD_b + k_j^2C_j^TD_j\right) \\
= & \sum_{i,j}g_{ij}C_i^TD_j,
\end{split}
\end{equation}
where we collected in the coefficients $g_{ij}$ the following quantities:
\[
g_{ij}=
\begin{cases}
 \lambda_ik_i + \sum_{r = i + 1}^N\lambda_r,\text{ if } i \neq j\\
\lambda_ik_i^2 + \sum_{r = i + 1}^N\lambda_r,\text{ if } i = j.
\end{cases}
\]
As already computed in \eqref{a}, we have:
\[
g_{ij} = g_{ji} = \lambda_ik_i + \sum_{r = i + 1}^N\lambda_r = \frac{\mu}{\mu - 1},
\]
On the other hand,
\[
g_{ii} = k_i^2\lambda_i + \sum_{r = i + 1}^N\lambda_r = k_i(k_i - 1)\lambda_i + \frac{\mu}{\mu - 1}.
\]
Using the equalities $\sum_\ell C_\ell = 0 = \sum_\ell D_\ell$, then also $\sum_{i,j}C_i^TD_j = 0$, and so $\sum_{i\neq j}C_i^TD_j = -\sum_iC_i^TD_i$. Hence, \eqref{quadsum} becomes
\[
 \sum_{i,j}g_{ij}C_i^TD_j = \frac{\mu}{\mu - 1}\sum_{i\neq j}C_i^TD_j + \sum_i\left( k_i(k_i - 1)\lambda_i + \frac{\mu}{\mu - 1}\right)C_i^TD_i =  \sum_i k_i(k_i - 1)\lambda_iC_i^TD_i.
\]
We just proved that

\begin{equation}\label{b}
\sum_j\lambda_jX_j^TY_j =  \sum_i k_i(k_i - 1)\lambda_iC_i^TD_i.
\end{equation}

In particular,
\begin{equation}\label{c}
\sum_j\lambda_j\langle X_j,Y_j\rangle = 0,
\end{equation}
since $C_i^TD_i$ is trace-free for every $i$. We also have:
\[
\sum_j\lambda_jZ_j = \sum_j\lambda_jX^T_jY_j - \sum_j\lambda_jc_j\id \Rightarrow \sum_i k_i(k_i - 1)\lambda_iC_i^TD_i = R + \sum_j\lambda_jc_j\id.
\]
Since both $\tr(R)$ and  $\tr\left(\sum_i k_i(k_i - 1)\lambda_iC_i^TD_i\right) =0$, then $ \sum_j\lambda_jc_j = 0$ and we get
\[
\sum_i k_i(k_i - 1)\lambda_iC_i^TD_i = R.
\]
Recall the definition of $t^i$, namely 
$$t^i= \frac{1}{\xi_i}(\mu\lambda_1,\dots,\mu\lambda_{i - 1},\lambda_i,\dots,\lambda_N)\, .$$
By the previous computation ($i = 1$) and \eqref{secreq}, it is convenient to rewrite \eqref{finalsum?} as
\begin{equation}\label{rew}
R + \sum_j^{i - 1}E_j = \frac{1}{\xi_i}\left(R + (\mu - 1)\sum_{j = 1}^{i - 1}\lambda_jX_j^TY_j\right) - \sum_jt_j^ic_j\id.
\end{equation}
Once again, let us express the sum up to $i - 1$ in the following way:
\[
\sum_{j = 1}^{i - 1}\lambda_jX_j^TY_j = \sum_{k,j}^{i - 1}s_{kj}C_k^TD_j.
\]
A combinatorial argument analogous to the one in the previous case gives
\begin{align*}
&
s_{\ell\ell}= k_\ell^2\lambda_\ell + \dots + \lambda_{i - 1}
 \\
&\phantom{s_{\ell\ell}}= (k_\ell^2 - k_\ell)\lambda_\ell + k_\ell\lambda_\ell  + \dots + \lambda_{i - 1},
\\
& s_{\alpha\beta} = k_\alpha\lambda_\alpha + \dots + \lambda_{i - 1},\qquad\alpha>\beta\\
& s_{\beta\alpha} = k_\beta\lambda_\beta + \dots + \lambda_{i - 1},\qquad \alpha<\beta.
\end{align*}
Now
\[
 k_r\lambda_r + \dots + \lambda_{i - 1}=\frac{\mu( \sum_{j = 1}^{i - 1}\lambda_j) + \sum_{j = i}^{N}\lambda_j }{\mu - 1}
\]
and so
\[
 k_r\lambda_r + \dots + \lambda_{i - 1}=\frac{(\mu -1)( \sum_{j = 1}^{i - 1}\lambda_j) + 1 }{\mu - 1} = \frac{\xi_i}{\mu - 1} =: b_{i - 1}
\]
Hence
\[
\sum_{j = 1}^{i - 1}\lambda_jX_j^TY_j = \sum_{k,j}^{i - 1}s_{kj}C_k^TD_j = b_{i - 1}\sum_{k,j}^{i - 1}C_k^TD_j + \sum_{\alpha = 1}^{i - 1} k_\alpha(k_\alpha - 1)\lambda_\alpha C_\alpha^TD_\alpha
\]

We rewrite \eqref{rew} as

\begin{equation}\label{useful}
R + \sum_{j = 1}^{i - 1}E_j = \frac{1}{\xi_i}\left(R + \xi_i\sum_{k,j}^{i - 1}C_k^TD_j + (\mu - 1)\sum_{\alpha = 1}^{i - 1} k_\alpha(k_\alpha - 1)\lambda_\alpha C_\alpha^TD_\alpha\right) - \sum_jt_j^ic_j\id
\end{equation}

$E_i$ is readily computed using \eqref{useful} and the definition of $Z_i$:
\[
k_iE_i + \frac{1}{\xi_i}\left(R + \xi_i\sum_{k,j}^{i - 1}C_k^TD_j + (\mu - 1)\sum_{\alpha = 1}^{i - 1} k_\alpha(k_\alpha - 1)\lambda_\alpha C_\alpha^TD_\alpha\right) - \sum_jt_j^ic_j\id = X_i^TY_i - c_i\id
\]
then
\begin{align*}
&k_iE_i + \frac{1}{\xi_i}\left(R + (\mu - 1)\sum_{\alpha = 1}^{i - 1} k_\alpha(k_\alpha - 1)\lambda_\alpha C_\alpha^TD_\alpha\right) =\\
&k_i\sum_j^{i - 1}C_i^TD_j +  k_i\sum_{j}^{i - 1}C_j^TD_i + k_i^2C_i^TD_i - c_i\id + \sum_jt_j^ic_j\id.
\end{align*}

The evaluation of the previous expression at the vectors $n_i$ of Proposition \ref{p:T_N'_easy}(ii) yields
\begin{equation}\label{finaleq}
\frac{1}{\xi_i}\left(Rn_i + (\mu - 1)\sum_{\alpha = 1}^{i - 1} k_\alpha(k_\alpha - 1)\lambda_\alpha C_\alpha^TD_\alpha n_i\right) =
k_i\sum_j^{i - 1}C_i^TD_jn_i  - c_in_i + \sum_jt_j^ic_jn_i.
\end{equation}
Now, since $C_iv = 0, \forall v\perp n_i$, we must have
\[
C_i^TD_jn_i = \langle C_i,D_j\rangle n_i.
\]
The last equality implies that the right hand side of \eqref{finaleq} is exactly $\nu_i n_i$, where $\nu_i$ has been defined in \eqref{maininequalities}. We will now prove that there exists a nontrivial subset $A\subset \{1,\dots, N\}$ such that
\[
\sum_{j \in A}\xi_j\nu_j =0,
\]
and this will conclude the proof, being $\xi_j > 0,\forall j$. Since $C_\alpha^TD_\alpha n_i = b_{\alpha i}n_\alpha$, if we define 
\[
a_{\alpha i} := -(\mu - 1)\xi_ik_\alpha(k_\alpha - 1)\lambda_\alpha b_{\alpha i}\, ,
\] 
then we can rewrite \eqref{finaleq} as
\[
Rn_i =\xi_i\nu_i n_i + \sum_{\alpha = 1}^{i - 1}a_{\alpha i}n_\alpha.
\]
Now, consider the set $A \subset \{1,\dots,N\}$ defined as
\[
A = \{1\}\cup\{j: n_j \text{ cannot be written as a linear combination of vectors } n_\ell, \text{ for any }\ell\le j\}.
\]
Clearly 
\[
\spann (\{n_s: s\in A\}) = \spann(n_1,\dots,n_N) \subset \mathbb{R}^m
\] 
and moreover $\{n_s: s\in A\}$ are linearly independent.
Define $S:= \spann(\{n_s:s\in A\})$, and consider the relation
\[
Rn_i =\xi_i\nu_i n_i + \sum_{\alpha = 1}^{i - 1}a_{\alpha i}n_\alpha.
\]
for $i \in A$. This can be rewritten as
\begin{equation}\label{triang}
Rn_i =\xi_i\nu_i n_i + \sum_{\alpha \in A,\alpha\le{i - 1}}d_{\alpha i}n_\alpha,
\end{equation}
for some coefficients $d_{\alpha i}$.
 Recall that
\[
R = \sum_i k_i(k_i - 1)\lambda_iC_i^TD_i.
\]
By the properties of the matrices $C_i$'s, we see that $\im(R) \subseteq S$. Now complete (if necessary) $\{n_s: s \in A\}$ to a basis $\mathcal{B}$ of $\mathbb{R}^m$ adding vectors $\gamma_j$ with the property that $(\gamma_j,\gamma_k) = (\gamma_j,n_s) = 0,\forall j\neq k, s \in A$ and $\|\gamma_j\| = 1, \forall j$. By the previous observation about the image of $R$ and \eqref{triang}, we are able to write the matrix of the linear map associated to $R$ for the basis $\mathcal{B}$ as
\[
\left[
\begin{array}{c|c}
\begin{matrix}
    \xi_{i_1}\nu_{i_1} &* &* & \dots  &* \\
    0 & \xi_{i_2}\nu_{i_2} &* & \dots  &* \\
    \vdots & \vdots & \vdots & \ddots & \vdots \\
    0 & 0 &0 & \dots  &  \xi_{i_{\dim(S)}}\nu_{i_{\dim(S)}}
\end{matrix}

& \mathbf{T}\\ \hline
\mathbf{0}_{m - \dim(S),\dim(S)} & \mathbf{0}_{m - \dim(S),m - \dim(S)}.
\end{array}
\right]
\] 
We denoted with $\mathbf{0}_{a,b}$ the zero matrix with $a$ rows and $b$ columns, with $\mathbf{T}$ the $\dim(S)\times (m - \dim(S))$ matrix of the coefficients of $R\gamma_j$ with respect to $\{n_s:s\in A\}$, and with $\mathbf{*}$ numbers we are not interested in computing explicitely. Moreover, we chose an enumeration of $A$ with $1 = i_1 <i_2< \dots < i_{\dim(S)}$. The previous matrix must have the same trace as $R$, so
\[
0 = \tr(R) = \sum_{j = 1}^{\dim (S)}\xi_{i_j}\nu_{i_j},
\] 
and the proof is finished.

\section{Stationary graphs and stationary varifolds}\label{s:gmt}

The aim of this section is to provide the link between stationary points for energies defined on functions (or graphs) and stationary varifolds for "geometric" energies. 

\subsection{Notation and preliminary definitions}

Recall that general $m$-dimensional varifolds in $\R^{m+n}$ (introduced by L.C. Young in \cite{LCY} and pioneered in geometric measure theory by Almgren \cite{Alm} and Allard \cite{ALLARD}) are nonnegative Radon measures on the Grassmaniann of $\mathbb G (m, m + n)$ of (unoriented) $m$-dimensional planes of $\R^{m+n}$. In our specific case we are interested on a subclass, namely integer rectifiable varifolds, for which we can take the simpler Definition \ref{d:varifolds} below.
A quick reference for the terminology used in this section is \cite{CAM}, whereas comprehensive introductions can be found in the foundational paper \cite{ALLARD} and in the book \cite{SIM}. 

\begin{Def}\label{d:varifolds}
An \emph{integer rectifiable varifold} $V$ of dimension $m$ is a couple $(\Gamma,\theta)$, where $\Gamma\subset \R^{m + n}$ is a $m$-rectifiable set in $\R^N$, and $\theta:\Gamma \to \mathbb{N}\setminus\{0\}$ is a Borel map.
\end{Def}

It is customary to denote $(\Gamma , \theta)$ as $\theta \llbracket \Gamma \rrbracket$ and to call $\theta$ the multiplicity of the varifold. 

\begin{Def}
Let $U$ be an open set of $\R^{m + n}$, and let $\Phi: \R^{m + n} \to U$ be a diffeomorphism. The \emph{pushforward} of an integer rectifiable varifold $V = \theta \llbracket \Gamma \rrbracket$ through $\Phi$ is defined as $\Phi_\#V = \theta\circ \Phi^{-1} \llbracket \Phi(\Gamma) \rrbracket$.
\end{Def}

For an integer rectifiable varifold $\theta\llbracket\Gamma\rrbracket$, it is customary to introduce a notion of approximate tangent plane, which exists for $\mathcal{H}^m$-a.e. point of $\Gamma$,  we refer to \cite[Theorem 3.1.8]{SIM} for the relevant details. Provided it exists, the tangent plane at the point $y \in \Gamma$ will be denoted with $T_y\Gamma$ and it is an element of  
$\mathbb{G}(m,m+n)$. In the following, we will identify the Grassmanian manifold with a suitable subset of orthogonal projections, i.e. for every $L\in \mathbb{G} (m, m+n)$ we consider the linear map $P: \R^{m+n} \to \R^{m+n}$ which is the orthogonal projection onto $L$. With this identification we have
\[
\mathbb{G}(m,m+n) \sim \left\{P \in \R^{(m + n) \times (m + n)}: P =P^T, P^2 = P, \rank(P) = \tr(P) = m \right\}.
\]
Since we are interested in graphs, we introduce the following useful notation

\begin{Def} The set $\mathbb{G}_0 (m, m+n)$ is given by those $m$-dimensional planes $L$ which are the graphs of a linear map $X:\R^m \to \R^n$. Namely, if we regard $X$ as an element of $\R^{n\times m}$, $L = \{(x, X x): x\in \mathbb R^m\}\in \mathbb{G} (m, m+n)$.
\end{Def}

Regarding $X$ as an element of $\R^{n\times m}$, the orthogonal projection onto $L$, regarded as an element $h (X)$ of $\R^{(m+n)\times (m+n)}$, is then given by the formula 
\[
h(X):= M(X)S(X)M(X)^T
\]
where
\[
 M(X):=
\left(
\begin{array}{c}
\id_m\\
X
\end{array}
\right) \qquad \mbox{and}\qquad S(X):=(M(X)^TM(X))^{-1},
\]
or, more explicitely,
\begin{equation}\label{hform}
h(X) = \left[\begin{array}{c|c}
h_1(X) &h_3(X)\\ \hline
h_2(X)& h_4(X)
\end{array}\right] = \left[\begin{array}{c|c}
S(X)&S(X)X^T\\ \hline
XS(X)&XS(X)X^T
\end{array}\right].
\end{equation}
The map $h$ is a smooth diffeomorphism between $\R^{n\times m}$ and the open subset $\mathbb{G}_0$. We will use in general, i.e. for any matrix $M \in \R^{(m+n)\times (m+n)}$ the same splitting as in \eqref{hform}:
\begin{equation}\label{splitting}
M =
\left[
\begin{array}{c|c}
M_1 & M_3\\ \hline
M_2 & M_4
\end{array}
\right]
\end{equation}
with $M_1 \in \R^{m\times  m}$, $M_4 \in \R^{n\times n}$. In this section, we will use freely the following fact. Recall that, by \eqref{e:area}, for every $X\in \R^{n\times m}$ the area element is given by 
\[
\mathcal{A}(X) = \sqrt{\det(\id_m + X^TX)}.
\]
By the Cauchy-Binet formula, \cite[Proposition 2.69]{AFP},
\[
\mathcal{A}(X) = \sqrt{1 + \|X\|^2 + \sum_{r = 2}^{\min\{m,n\}}\sum_{Z \in \mathcal{A}_r}\det(X^Z)^2},
\]
where we  used the  notation  introduced in Definition \ref{multiind}.

Finally, throughout the section, we use the following notation:
\begin{itemize}
\item if $z \in \R^m\times\R^n$, then we will write $z = (x,y), x \in \R^m, y \in \R^n$;
\item $\pi :\R^m\times\R^n \to \R^m$ denotes the projection on the first factor, i.e. $\pi(z) = \pi((x,y)) = x$.
\end{itemize}

\subsection{Graphs and varifolds} If $u\in W^{1,p} (\Omega, \R^n)$, $\Omega\subset \R^m$ and $p>m$, Morrey's embedding theorem shows the existence of a precise representative of $u$ which is H\"older continuous. In what follows we will always assume that the map $u$ is given pointwise by such (H\"older) continuous precise representative. Correspondingly we introduce the notation $\Gamma_u$ for the (set-theoretic) graph $\{(x, u(x)):x\in \Omega\}$, which is a relatively closed subset of $\Omega\times \R^n$. The classical area formula (see for instance  \cite[Cor. 2, Ch. 3]{MGS1}) implies that $\Gamma_u$ is $m$-rectifiable and its $\mathcal{H}^m$ measure is given by
\[
\int_\Omega \mathcal{A} (Du)\, .
\]
We can thus consider the corresponding varifold $\llbracket \Gamma_u \rrbracket$.
  
If $u \in W^{1,m}(\Omega,\R^n)$, then $u$ has a precise representative which is however defined only up to a set of $m$-capacity $0$ (but not {\em everywhere}). Moreover, if for maps $u\in W^{1,m} \cap C (\Omega ,\R^n)$, for which the set-theoretic graph $\Gamma_u$ could be defined classically, it can be proven that $\Gamma_u$ does not necessarily have locally finite $\mathcal{H}^m$-measure, in spite of the fact that $\mathcal{A} (Du)$ belongs to $L^1_{loc}$. In particular the area formula fails. For this reason, following the notation and terminology of \cite[Sec. 1.5, 2.1]{MGS1}, we introduce the ``rectifiable part of the graph of $u$'', which will be denoted by $\mathcal{G}_u$ (the notation in \cite{MGS1} is in fact $\mathcal{G}_{u, \Omega}$: we will omit the domain $\Omega$ since in our case it is always clear from the context). 

First we denote the set of Lebesgue points of $u$ by $\mathcal{L}_u$ and we introduce the set
\[
A_D(u):=\{x \in \Omega: u \text{ is approximately differentiable at }x\}.
\] 
For the definition of approximate differentiability, see \cite[Sec. 1.4, Def. 3]{MGS1}. We also set
\[
\mathcal{R}_u := A_D(u)\cap \mathcal{L}_u.
\]
Notice that, since $u \in W^{1,m}(\Omega,\R^n)$, then $|\Omega\setminus \mathcal{R}_u| = 0$. From now on, we always assume that $u$ so that $u(x)$ is the Lebesgue value at every point $x\in \mathcal{R}_u$. The \emph{rectifiable part of the graph} of $u$ is then
\[
\mathcal{G}_u := \{(x,u(x)): x \in \mathcal{R}_u\}\, .
\]
By \cite[Sec. 1.5, Th. 4]{MGS1}, $\mathcal{G}_u$ is $m$-rectifiable and 
\[
\mathcal{H}^m(\mathcal{G}_{u}) = \int_{\Omega}\mathcal{A}(Du(x))\dx\, .
\]
Since $\mathcal{A} (Du)\in L^1_{loc}$, this allows us to introduce the integer rectifiable varifold \footnote{In fact the $\mathcal{G}_{u}$ can be oriented to give an integer rectifiable current of multiplicity $1$ and without boundary in $\Omega \times \R^n$, see \cite[Pr. 1, Sec 2.1]{MGS1}. The varifold that we consider is then the one induced by the current in the usual sense.} $\llbracket \mathcal{G}_u \rrbracket$. When $u\in W^{1,p}$ for $p>m$, the Lusin property (namely the fact that $v (x):= (x, u(x))$ maps sets of Lebesgue measure zero in sets of $\mathcal{H}^m$-measure zero, cf. again \cite{MGS1}) and Morrey's embedding imply $\mathcal{G}_u \subset \Gamma_u$ and  $\mathcal{H}^m (\Gamma_u \setminus \mathcal{G}_u) =0$. In particular $\llbracket \mathcal{G}_u\rrbracket = \llbracket \Gamma_u \rrbracket$. 

By \cite[Sec. 1.5, Th. 5]{MGS1}, the approximate tangent plane $T_{y}\mathcal{G}_{u}$ coincides for $\mathcal{H}^m$-a.e. $z_0 = (x_0, u (x_0)) \in \mathcal{G}_u$ or, with
\[
T_z\mathcal{G}_{u} = \{(x,Du(x_0)x): x \in \mathbb R^m\} \in \mathbb{G}_0 (m, m+n)\, .
\]
The following proposition allows then to pass from functionals defined on varifolds to classical functionals in the vectorial calculus of the variations (and viceversa). 

\begin{prop}\label{areaformula}
Let $u \in W^{1,m}(\Omega,\R^n)$, and define $v(x) := (x,u(x))$. Denote with $C_b(\Omega\times \R^n\times \mathbb{G}_0)$ the space of continuous and bounded functions on $\Omega\times \R^n\times \mathbb{G}_0$. Then, for every $\varphi \in C_b(\Omega\times \R^n\times \mathbb{G}_0)$, the following holds
\begin{equation}\label{areaweak}
\llbracket \mathcal{G}_{u}\rrbracket(\varphi) : = \int_{\mathcal{G}_u} \varphi(z,T_z\mathcal{G}_{u})d\mathcal{H}^m(z) = \int_{\Omega}\varphi(v(x),h(Du(x)))\mathcal{A}(Du(x))\dx.
\end{equation}
\end{prop}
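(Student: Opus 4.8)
The plan is to obtain \eqref{areaweak} as an instance of the area formula applied to the map $v(x) := (x,u(x))$. First I would record that, for every $x$ in the set $\mathcal{R}_u = A_D(u)\cap\mathcal{L}_u$, the map $v$ is approximately differentiable with approximate differential the matrix $M(Du(x)) = \big(\begin{smallmatrix}\id_m\\ Du(x)\end{smallmatrix}\big)$ appearing before \eqref{hform}, and that by the Cauchy--Binet formula its $m$-dimensional Jacobian is
\[
\sqrt{\det\big(M(Du(x))^T M(Du(x))\big)} = \sqrt{\det(\id_m + Du(x)^T Du(x))} = \mathcal{A}(Du(x))\,.
\]
Since $v$ is injective (its first $m$ components are the identity), the area formula for approximately differentiable maps, in the form underlying \cite[Sec.\ 1.5, Th.\ 4]{MGS1} (see also \cite{FED}), gives, for every nonnegative Borel function $\psi$ on $\mathcal{G}_u$,
\[
\int_{\mathcal{G}_u} \psi(z)\, d\mathcal{H}^m(z) = \int_{\mathcal{R}_u} \psi\big(v(x)\big)\, \mathcal{A}(Du(x))\, \dx\,.
\]
This identity extends to bounded Borel $\psi$ by splitting into positive and negative parts (all integrals being finite after restricting to a subdomain compactly contained in $\Omega$, and directly when $\mathcal{A}(Du)\in L^1(\Omega)$).

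The next step is to specialize the previous identity to $\psi(z) := \varphi(z, T_z\mathcal{G}_u)$, which is defined for $\mathcal{H}^m$-a.e.\ $z\in\mathcal{G}_u$. Since the approximate tangent map $z\mapsto T_z\mathcal{G}_u\in\mathbb{G}(m,m+n)$ is $\mathcal{H}^m$-measurable on the rectifiable set $\mathcal{G}_u$, and $\varphi$ is continuous and bounded, $\psi$ is a bounded Borel function and the area formula above applies. By \cite[Sec.\ 1.5, Th.\ 5]{MGS1}, for $\mathcal{H}^m$-a.e.\ $z = (x,u(x))\in\mathcal{G}_u$ one has $T_z\mathcal{G}_u = \{(\xi, Du(x)\xi):\xi\in\R^m\}\in\mathbb{G}_0(m,m+n)$; under the identification of a plane in $\mathbb{G}_0$ with the orthogonal projection onto it, this plane is exactly $h(Du(x))$ by the definition of $h$ in \eqref{hform}. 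Hence $\psi(v(x)) = \varphi\big(v(x), h(Du(x))\big)$ for a.e.\ $x\in\mathcal{R}_u$, and substituting into the area formula, together with $|\Omega\setminus\mathcal{R}_u| = 0$, yields precisely \eqref{areaweak}.

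The substance of the argument is thus entirely contained in the two facts imported from \cite{MGS1}: that the rectifiable part of the graph $\mathcal{G}_u$ obeys the area formula even though $u$ is merely $W^{1,m}$ — equivalently, that $\Omega$ decomposes, up to a Lebesgue-null set, into countably many pieces on which $u$ (hence $v$) is Lipschitz, and that $v$ enjoys the Lusin $(N)$ property with respect to $\mathcal{H}^m$ on $\mathcal{R}_u$, so that the exceptional null set contributes nothing on either side — and that the approximate tangent plane to the graph is the graph of the approximate differential. I expect the first of these to be the only genuinely delicate point; once it is invoked in the form stated in \cite[Sec.\ 1.5]{MGS1}, no further estimate is required and the remainder is bookkeeping. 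A self-contained alternative would carry out the Lipschitz decomposition of $u$ explicitly, apply the classical area formula on each Lipschitz piece, sum, and verify directly that the exceptional null set is harmless on both sides of \eqref{areaweak}.
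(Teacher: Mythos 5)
Your proposal is correct and follows essentially the same route as the paper's proof: both reduce \eqref{areaweak} to the area formula for approximately differentiable $W^{1,m}$ maps from \cite[Sec.~1.5]{MGS1}, use $v(x)=(x,u(x))$ with Jacobian $\mathcal{A}(Du(x))$, invoke injectivity, and then identify $T_z\mathcal{G}_u = h(Du(x))$ via \cite[Sec.~1.5, Th.~5]{MGS1}. The only cosmetic difference is that the paper invokes the version of the area formula with an explicit multiplicity function $N(v,\mathcal{L}_u,\cdot)$ (\cite[Sec.~1.5, Th.~1]{MGS1}) and then observes $N\equiv 1$ on $\mathcal{G}_u$ by injectivity, while you cite the equivalent formulation directly in terms of the rectifiable set $\mathcal{G}_u$; the substance is identical.
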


 Consider therefore a functional
\[
\mathds{E}(u) := \int_{\Omega}f(Du(x))\dx\, ,
\]
for some $f: \R^{n\times m}\to \R$ with $$\frac{f(X)}{\mathcal{A}(X)} \in C_b(\R^{n\times m}).$$ Define moreover $F,G: \mathbb{G}_0 \to \R$ as
\[
F(M) := f(h^{-1}(M)),\; G(M):= \mathcal{A}(h^{-1}(M)).
\]
For any map $u \in W^{1,m}(\Omega,\R^n)$, we can apply \eqref{areaweak} to write:
\[
\int_{\Omega}f(D u(x))\dx= \int_{\Omega}F(h(D u(x)))\dx =  \int_{\Omega}\frac{F(h(D u(x)))}{G(h(Du(x)))}\mathcal{A}(Du(x))\dx =  \int_{\mathcal{G}_{u}}\Psi(T_z\mathcal{G}_{u})\di\mathcal{H}^m(z)\, ,
\]
where we have defined the map $\Psi$ on the open subset $\mathbb{G}_0$ of the Grassmanian  $\mathbb{G}(m,m + n)$ as $$\Psi (h(X)) := \frac{F(h(X))}{G(h(X))} = \frac{f(X)}{\mathcal{A}(X)}.$$
We are thus ready to introduce the following functional

\begin{Def}\label{d:functional_varifolds}
Let $V = \theta \llbracket \Gamma \rrbracket$ be an $m$-dimensional integer rectifiable varifold in $\R^{m+n}$ with the property that the approximate tangent $T_x \Gamma$ belongs to $\mathbb{G}_0$ for $\mathcal{H}^m$-a.e. $x\in \Gamma$. Then
\[
\Sigma(V) = \int_{\Gamma} \Psi(T_x\Gamma)\theta(x)d\mathcal{H}^m(x)\, .
\]
\end{Def}

The above discussion then proves the following

\begin{prop}
If $\Omega \subset \R^m$ and $u\in W^{1,m} (\Omega, \R^n)$, then $\Sigma (\llbracket \mathcal{G}_{u} \rrbracket)= \mathds E (u)$. Moreover, if $u\in W^{1,p} (\Omega, \R^n)$ with $p>m$, then $\Sigma (\llbracket \Gamma_u \rrbracket)= \mathds E (u)$.  
\end{prop}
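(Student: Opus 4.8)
The plan is to obtain the statement as an immediate consequence of Proposition~\ref{areaformula}, so essentially no new computation is required; the work is to pick the right test function and to unwind the definitions of $\Sigma$, $\Psi$, $F$ and $G$. First I would observe that, since $h\colon \R^{n\times m}\to \mathbb{G}_0$ is a smooth diffeomorphism and $\Psi\circ h = f/\mathcal{A}$ on $\R^{n\times m}$, the standing hypothesis $f/\mathcal{A}\in C_b(\R^{n\times m})$ forces $\Psi\in C_b(\mathbb{G}_0)$. Hence $\varphi(z,M):=\Psi(M)$, constant in the first slot, is an admissible test function in \eqref{areaweak}, i.e.\ $\varphi\in C_b(\Omega\times\R^n\times\mathbb{G}_0)$.

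Next I would apply Proposition~\ref{areaformula} with this $\varphi$ to $u\in W^{1,m}(\Omega,\R^n)$. Since $T_z\mathcal{G}_u\in\mathbb{G}_0$ for $\mathcal{H}^m$-a.e.\ $z\in\mathcal{G}_u$, the left-hand side of \eqref{areaweak} is exactly $\int_{\mathcal{G}_u}\Psi(T_z\mathcal{G}_u)\,d\mathcal{H}^m(z)$, which by Definition~\ref{d:functional_varifolds} (with multiplicity $\theta\equiv 1$ and $\Gamma=\mathcal{G}_u$) is precisely $\Sigma(\llbracket\mathcal{G}_u\rrbracket)$. The right-hand side equals $\int_\Omega \Psi(h(Du))\,\mathcal{A}(Du)\dx=\int_\Omega \tfrac{f(Du)}{\mathcal{A}(Du)}\,\mathcal{A}(Du)\dx=\int_\Omega f(Du)\dx=\mathds{E}(u)$, using $\Psi(h(X))=f(X)/\mathcal{A}(X)$ and $\mathcal{A}(X)>0$. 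This gives the first identity. For the second, when $p>m$ Morrey's embedding yields a continuous precise representative, so the set-theoretic graph $\Gamma_u$ is well defined, and the Lusin $(N)$ property of $v(x)=(x,u(x))$ (recalled in the text, cf.\ \cite{MGS1}) gives $\mathcal{G}_u\subset\Gamma_u$ with $\mathcal{H}^m(\Gamma_u\setminus\mathcal{G}_u)=0$; hence $\llbracket\mathcal{G}_u\rrbracket=\llbracket\Gamma_u\rrbracket$ as varifolds, and therefore $\Sigma(\llbracket\Gamma_u\rrbracket)=\Sigma(\llbracket\mathcal{G}_u\rrbracket)=\mathds{E}(u)$.

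I do not expect any serious obstacle here: the only points requiring a little care are (i) verifying that $\Psi$ is genuinely bounded and continuous on the \emph{open} subset $\mathbb{G}_0$ of the Grassmannian, which is exactly where the assumption $f/\mathcal{A}\in C_b$ enters and is the reason one cannot allow an arbitrary continuous integrand $f$; and (ii) the identification $\llbracket\mathcal{G}_u\rrbracket=\llbracket\Gamma_u\rrbracket$ in the supercritical regime $p>m$, which rests on the Lusin property above and genuinely fails for general $u\in W^{1,m}$. Everything else is a direct substitution into Proposition~\ref{areaformula} and the definitions.
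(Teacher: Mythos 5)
Your proposal is correct and follows the paper's own argument essentially verbatim: the paper likewise obtains the identity $\Sigma(\llbracket\mathcal{G}_u\rrbracket)=\mathds{E}(u)$ by substituting $\varphi(z,M)=\Psi(M)$ into \eqref{areaweak} of Proposition~\ref{areaformula}, after noting $\Psi\in C_b(\mathbb{G}_0)$ because $\Psi\circ h = f/\mathcal{A}\in C_b(\R^{n\times m})$, and then deduces the $p>m$ case from the already-established identification $\llbracket\mathcal{G}_u\rrbracket=\llbracket\Gamma_u\rrbracket$ via the Lusin property.
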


\subsection{First variations}
We do not address here the issue of extending the functional $\Sigma$ to general varifolds (namely of extending $\Psi$ to all of $\mathbb{G}(m,m+n)$). Rather, assuming that such an extension exists, we wish to show that the usual stationarity of varifolds with respect to the functional $\Sigma$ is equivalent to stationarity with respect to two particular classes of deformations, which reduce to inner and outer variations in the case of graphs. We start recalling the usual stationarity condition. 

\begin{Def}\label{d:stationary}
Let $\Psi:\mathbb{G}(m,m+n) \to [0, \infty]$ be a continuous function. Fix a vector field $g \in C^1_c(\R^{m + n}; \R^{m + n})$ and define $X_\varepsilon$ as the flow generated by $g$, namely $X_\varepsilon(x) = \gamma_x(\varepsilon)$, if $\gamma_x$ is the solution of the following system
\[
\begin{cases}
\gamma'(t) = g(\gamma(t))\\
\gamma(0) = x.
\end{cases}
\]
We define the variation of $V$ with respect to the vector field $g \in C^1_c(\R^{m + n}; \R^{m + n})$ as
\[
[\delta_\Psi V] (g) := \lim_{\varepsilon \to 0}\frac{\Sigma((X_\varepsilon)_\#V) - \Sigma(V)}{\varepsilon}.
\]
$V$ is said to be \emph{stationary} if $[\delta_\Psi V] (g)=0, \forall g \in C^1_c(\R^{m + n}; \R^{m + n})$.
\end{Def}

Given an orthogonal projection $P\in \mathbb{G}_{m, m+n)}$, we introduce the notation $P^\perp$ for $\id_{m+n}-P$ (note that, if we identify $P$ with the linear space $L$ which is the image of $P$, then $P^\perp$ is the projection onto the orthogonal complement of $L$). 
From \cite[Lemma A.2]{DDG}, we know that, for $V = \theta\llbracket\Gamma\rrbracket$,
\begin{equation}\label{B}
[\delta_{\Psi}(V)](g) = \int_{\Gamma}\langle B_\Psi(T_x\Gamma),Dg(x)\rangle \theta(x)d\mathcal{H}^m(x), \forall g \in C^1_c(\R^{m + n},\R^{m + n})
\end{equation}
where $B_\Psi(\cdot) : \mathbb{G}(m,m + n) \to \R^{(m + n) \times (m + n)}$ is defined through the relation
\begin{equation}\label{BPsi}
\begin{split}
\langle B_\Psi(P), L\rangle = \Psi(P)\langle P, L \rangle + \langle d\Psi(P), P^\perp L P + (P^\perp L P)^T\rangle, 
\\
\forall P \in \mathbb{G}(m,m + n), \forall L \in \R^{(m + n)\times (m + n)},
\end{split}
\end{equation}
We are now ready to state our desired equivalence between stationarity of the map $u$ for the energy $\mathds{E}$ and stationarity of the varifold $\llbracket \mathcal{G}_u\rrbracket$ for the corresponding functional $\Sigma$. In what follows, given a function $g$ on $\mathcal{G}_u$ we will use the shorthand notation $\|g\|_{q, \mathcal{G}_u}$ for the norm $\|g\|_{L^q (\mathcal{H}^m \res \mathcal{G}_u)}$. 

\begin{prop}\label{link}
Fix any $m \le p \le +\infty$, $1\le q < +\infty$ and a Lipschitz, bounded, open set $\Omega \subset \R^m$. If a map $u\in W^{1,p}(\Omega,\mathbb{R}^n)$ satisfies
\begin{equation}\label{graphs}
\begin{cases}
\displaystyle \left|\int_{\Omega}\langle Df(D u),D v\rangle\dx\right| \le C\|v\mathcal{A}^{\frac{1}{q}}(D u)\|_q &\forall v\in C^1_c(\Omega,\mathbb{R}^n) \vspace{1mm}\\ 
\displaystyle\left|\int_{\Omega}\langle Df(D u), D u D \Phi(x)\rangle \dx - \int_{\Omega}f(D u)\dv(\Phi) \dx\right| \le C\|\Phi\mathcal{A}^{\frac{1}{q}}(D u)\|_q\;\; &\forall \Phi\in C_c^1(\Omega,\mathbb{R}^m),
\end{cases}
\end{equation}
for some $C \ge 0$, then the integer rectifiable varifold $\llbracket\mathcal{G}_{u}\rrbracket$ in $\mathbb{R}^{m + n}$ satisfies
\begin{equation}\label{varifolds}
\left|\delta_\Psi(\llbracket\mathcal{G}_{u} \rrbracket)(g)\right| \le C'\|g\|_{q, \mathcal{G}_u}\qquad \forall g\in C_c^1(\Omega\times\mathbb{R}^n,\mathbb{R}^{m + n}),
\end{equation}
for some number $C' = C'(C, m, p, q) \ge 0$. Conversely, if \eqref{varifolds} holds for some $C'$, then \eqref{graphs} holds for some $C = C (C',m,p,q)$. 
Moreover, $C'=0$ if and only $C=0$, namely $u$ is stationary for the energy $\mathds{E}$ if and only if $\llbracket\mathcal{G}_u\rrbracket$ is stationary for the energy $\Sigma$.
\end{prop}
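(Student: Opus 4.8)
The plan is to reduce everything to the representation formula \eqref{B}. Combining it with the area formula of Proposition \ref{areaformula} (applied entrywise to $\varphi=(B_\Psi)_{ab}\circ h$), for $V=\llbracket\mathcal{G}_u\rrbracket$ and any $g\in C^1_c(\Omega\times\R^n,\R^{m+n})$ we get
\[
\delta_\Psi(\llbracket\mathcal{G}_u\rrbracket)(g)=\int_\Omega\big\langle B_\Psi\big(h(Du(x))\big),\;Dg(x,u(x))\big\rangle\,\mathcal{A}(Du(x))\,\dx ,
\]
where $Dg(x,u(x))\in\R^{(m+n)\times(m+n)}$ is the ambient Jacobian. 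Splitting $g=(\phi,\psi)$ into its $\R^m$-valued part $\phi$ and $\R^n$-valued part $\psi$, and writing $\bar\phi(x):=\phi(x,u(x))$, $\bar\psi(x):=\psi(x,u(x))$ (so $D\bar\phi=\partial_x\phi+\partial_y\phi\,Du$ and likewise $D\bar\psi=\partial_x\psi+\partial_y\psi\,Du$), the goal is to establish the pointwise identity
\[
\big\langle B_\Psi\big(h(Du)\big),\,Dg(x,u(x))\big\rangle\,\mathcal{A}(Du)=\big\langle Df(Du),D\bar\psi\big\rangle-\big\langle Du^TDf(Du)-f(Du)\,\id,\;D\bar\phi\big\rangle ,
\]
and then to read off both implications by comparison with \eqref{graphs}.

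First I would observe that only the tangential part of $Dg$ matters: rewriting \eqref{BPsi} via the symmetry of $d\Psi(P)$ gives $B_\Psi(P)=\Psi(P)\,P+2\,P^\perp d\Psi(P)\,P$, whence $P^\perp B_\Psi(P)^T=0$ and therefore $\langle B_\Psi(P),L\,P^\perp\rangle=0$ for every $L$. Since $P:=h(Du)=M(Du)S(Du)M(Du)^T$ (notation of Proposition \ref{areaformula}), and the columns of $Dg\cdot M(Du)$ equal $\partial_{x_i}g+\partial_yg\,\partial_{x_i}u=\partial_{x_i}\big(g(x,u(x))\big)$, one gets
\[
\big\langle B_\Psi(h(Du)),Dg\big\rangle=\big\langle B_\Psi(h(Du)),Dg\cdot h(Du)\big\rangle=\Big\langle B_\Psi(h(Du)),\ \begin{pmatrix}D\bar\phi\\ D\bar\psi\end{pmatrix}S(Du)\,M(Du)^T\Big\rangle ,
\]
so the left-hand side is a \emph{linear} function of $(D\bar\phi,D\bar\psi)$ with coefficients depending only on $Du$. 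The heart of the argument is then the explicit computation of $B_\Psi(h(X))$: starting from $\Psi\circ h=f/\mathcal{A}$ one computes $d(\Psi\circ h)$ using $D\mathcal{A}(X)=\mathcal{A}(X)\,X S(X)$ and the cofactor (Cauchy--Binet) identities behind the minors in $\mathcal{A}$, inverts through the diffeomorphism $h$ to obtain $d\Psi(h(X))$, substitutes into the formula for $B_\Psi$, and expands the contraction above block by block via \eqref{splitting}--\eqref{hform}. The factors $S(Du)$ and $M(Du)$ cancel and one is left with exactly the right-hand side of the claimed identity — the outer-variation integrand tested against $\bar\psi$ minus the inner-variation integrand tested against $\bar\phi$; note that the two matrices $Df(Du)$ and $Du^TDf(Du)-f(Du)\id$ are precisely the second and third blocks of $K_f$ in \eqref{e:inclusion}, so the div--curl structure reappears geometrically. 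This is the anisotropic counterpart of the computations in \cite{MGS1,MGS2}. Integrating one gets the exact identity
\[
\delta_\Psi(\llbracket\mathcal{G}_u\rrbracket)(g)=\int_\Omega\langle Df(Du),D\bar\psi\rangle\,\dx-\int_\Omega\big(\langle Df(Du),Du\,D\bar\phi\rangle-f(Du)\,\dv\bar\phi\big)\,\dx ,
\]
valid for all $g=(\phi,\psi)\in C^1_c(\Omega\times\R^n,\R^{m+n})$; both sides are finite because the integrand is pointwise bounded by a constant times $\mathcal{A}(Du)\in L^1(\Omega)$.

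Both implications then follow by bookkeeping. If \eqref{graphs} holds with constant $C$, apply it — after the routine approximation step extending \eqref{graphs} to test fields in $W^{1,p}_c\cap L^\infty$ (legitimate since $|Df(Du)|+|f(Du)|\le C'\mathcal{A}(Du)$) — to $v:=\bar\psi$ and to $\Phi:=\bar\phi$; the area formula gives $\|\bar\psi\,\mathcal{A}^{1/q}(Du)\|_q\le\|g\|_{q,\mathcal{G}_u}$ and likewise for $\bar\phi$, so the identity above yields \eqref{varifolds} with $C'=2C$. Conversely, suppose \eqref{varifolds} holds with constant $C'$ and fix $v\in C^1_c(\Omega,\R^n)$. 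When $p>m$, Morrey's embedding makes $u$ continuous, hence bounded on $\spt v$, so I would pick $\chi\in C^1_c(\R^n)$ with $\chi\equiv1$ on a neighbourhood of $u(\spt v)$ and test \eqref{varifolds} with $g:=(0,\chi\,v)$: then $\bar\phi=0$, $\bar\psi=v$, $\|g\|_{q,\mathcal{G}_u}=\|v\,\mathcal{A}^{1/q}(Du)\|_q$, and the identity above gives the first inequality of \eqref{graphs} with $C=C'$; testing with $g:=(\chi\,\Phi,0)$, $\Phi\in C^1_c(\Omega,\R^m)$, $\chi\equiv1$ near $u(\spt\Phi)$, gives the second. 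Since the two inequalities in \eqref{graphs} are instances of the identity above, $C=0$ if and only if $C'=0$. The borderline case $p=m$ (where $u$ need not be bounded) is recovered by the same scheme with cut-offs $\chi_k\uparrow1$, passing to the limit using $\mathcal{A}(Du)\in L^1(\Omega)$ and $|\{|u|\ge k\}|\to0$.

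The main obstacle is the computation in the third paragraph: producing $B_\Psi(h(X))$ explicitly from $\Psi\circ h=f/\mathcal{A}$ and verifying that, once contracted with $S(Du)\,M(Du)^T$, the resulting linear form in $(D\bar\phi,D\bar\psi)$ collapses precisely to the two Euler--Lagrange integrands appearing in \eqref{graphs}. The remaining ingredients — the vanishing of the normal part of $Dg$, the $W^{1,p}_c$-approximation, the construction of the cut-offs $\chi$, and the tracking of the constants — are routine.
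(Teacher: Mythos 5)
Your proposal follows the paper's own path: reduce to the pointwise identity relating the first variation of $\llbracket\mathcal{G}_u\rrbracket$ to the outer/inner Euler--Lagrange integrands (the paper's Lemmas \ref{equivalence} and \ref{equality}), then run an approximation argument for the forward implication and a cutoff argument for the converse. The ``tangential'' observation $\langle B_\Psi(P),L\,P^\perp\rangle=0$ is a nice conceptual shortcut, but you still need the block-by-block computation of $B_\Psi(h(X))$ to identify the coefficients as $A(X)=Df(X)$ and $B(X)=f(X)\id-X^TDf(X)$; you correctly flag this as the main remaining work, and it is precisely Lemma \ref{equivalence}.

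There is one genuine (if small) gap in the bookkeeping of the forward direction. You justify extending \eqref{graphs} from $C^1_c$ to $W^{1,m}_0\cap L^\infty$ test fields by the bound $|Df(Du)|\lesssim\mathcal{A}(Du)$, which indeed follows from the boundedness of $V_f=B_\Psi\circ h$. But this only gives $Df(Du)\in L^1(\Omega)$, which is not enough to pass to the limit in $\int\langle Df(Du),Dv_k\rangle$ when $v_k\to\bar\psi$ in $W^{1,m}$: by H\"older one needs $Df(Du)\in L^{m/(m-1)}(\Omega)$. In the borderline case $p=m$ with $n\ge m$, $\mathcal{A}(Du)\sim 1+|Du|^{m}$, so the crude bound only gives $Df(Du)\in L^1$, not $L^{m/(m-1)}$. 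What makes the approximation work is the sharper polynomial growth estimate $\|Df(X)\|\lesssim 1+\|X\|^{\min\{m,n\}-1}$ and $\|B(X)\|\lesssim 1+\|X\|^{\min\{m-1,n\}}$ of the paper's Lemma \ref{growth}, which combined with $Du\in L^m$ gives $Df(Du)\in L^{m/(m-1)}$. So you should replace the crude bound by this polynomial one (or prove it); it is not a consequence of the boundedness of $B_\Psi$ alone, and without it the ``routine approximation'' does not go through in the borderline case $p=m$.
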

\begin{rem}
As already noticed, when $p > m$ we can replace $\llbracket\mathcal{G}_{u}\rrbracket$ with $\llbracket \Gamma_u \rrbracket$. Moreover, under such stronger assumption, the proposition holds also for $q = \infty$, provided we set $\mathcal{A}(D u)^{\frac{1}{q}} := 1$ in that case.
\end{rem}

The proof of the previous proposition is a consequence of a few technical lemmas and will be given in the next section.

\section{Proof of Proposition \ref{link}}

 Let $f \in C^1(\R^{n \times m})$ be of the form $f(X) = \Psi(h(X))\mathcal{A}(X)$. In the next lemma we study the growth of the matrix-fields associated to the inner and the outer variations, i.e.
\begin{align}
A(X)&:= Df(X)\label{e:defA}\\
B(X)&:= f(X)\id_m- X^TDf(X).\label{e:defB}
\end{align}
Define also the matrix-field $V_f:\R^{n\times m} \to \R^{(m + n)\times (m + n)}$ to be
\begin{equation}\label{V}
V_f(X) := \frac{1}{\A(X)}
\left[\begin{array}{c|c}
B(X) & B(X)X^T \\ \hline
A(X) & A(X)X^T
\end{array}\right].
\end{equation}
In Lemma \ref{equivalence}, we will prove that
\[
B_\Psi(h(X)) = V_f(X),\; \forall X \in \R^{n \times m}.
\]
Combining Lemma \ref{growth} and \ref{equivalence} with the area formula we obtain Lemma \ref{equality},  from which we will infer Proposition \ref{link}.
\begin{lemma}\label{growth}
Let $\Psi \in C^1(\mathbb{G}(m,m+n))$ and let $f(X) = \Psi(h(X))\mathcal{A}(X)$, where $h$ is the map defined in $\eqref{hform}$. Then,
\begin{equation}\label{Fest}
\|A(X)\| \lesssim 1 +\|X\|^{\min\{m,n\} - 1}, \|B(X)\| \lesssim 1 + \|X\|^{\min\{n,m - 1\}}.
\end{equation}
\end{lemma}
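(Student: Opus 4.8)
The plan is to write $f=(\Psi\circ h)\,\mathcal A$ and reduce all the estimates, via the singular value decomposition of $X$, to an explicit computation for diagonal matrices. The first step is to record two elementary identities for the area element. Differentiating $\mathcal A(X)^2=\det(\id_m+X^TX)$ with Jacobi's formula gives $D\mathcal A(X)=\mathcal A(X)\,X\,S(X)$ with $S(X):=(\id_m+X^TX)^{-1}$, and, since $X^TX\,S(X)=\id_m-S(X)$, this yields $\mathcal A(X)\,\id_m-X^TD\mathcal A(X)=\mathcal A(X)\,S(X)$. Substituting $f=(\Psi\circ h)\mathcal A$ into \eqref{e:defA}--\eqref{e:defB} and applying the chain rule, these identities give
\[
A(X)=\Psi(h(X))\,D\mathcal A(X)+\mathcal A(X)\,\big[d\Psi(h(X))\circ Dh(X)\big],
\]
\[
B(X)=\Psi(h(X))\,\mathcal A(X)\,S(X)-\mathcal A(X)\,X^T\big[d\Psi(h(X))\circ Dh(X)\big],
\]
where $\big[d\Psi(h(X))\circ Dh(X)\big]\in\R^{n\times m}$ denotes the matrix representing the functional $\dot X\mapsto d\Psi(h(X))\big[Dh(X)\dot X\big]$. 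Since $\mathbb{G}(m,m+n)$ is compact and $\Psi\in C^1$, both $\|\Psi\|_\infty$ and $\|d\Psi\|_\infty$ are finite, so it remains to bound $\|D\mathcal A(X)\|$, $\mathcal A(X)\|S(X)\|$, $\mathcal A(X)\|Dh(X)\|$ and $\mathcal A(X)\,\|X^T[d\Psi(h(X))\circ Dh(X)]\|$ by the asserted powers of $\|X\|$.

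The second step is a reduction to diagonal $X$. From $M(U_1XV_1^T)=\diag(V_1,U_1)\,M(X)\,V_1^T$ for $U_1\in O(n)$, $V_1\in O(m)$ one reads off the equivariance $h(U_1XV_1^T)=R\,h(X)\,R^T$ with $R=\diag(V_1,U_1)\in O(m+n)$. Writing the SVD $X=U\Sigma V^T$, setting $\widetilde\Psi(P):=\Psi(\diag(V,U)\,P\,\diag(V,U)^T)$ and $\widetilde f:=(\widetilde\Psi\circ h)\mathcal A$, a short manipulation of the two formulas above shows $A(X)=U\,\widetilde A(\Sigma)\,V^T$ and $B(X)=V\,\widetilde B(\Sigma)\,V^T$, where $\widetilde A,\widetilde B$ are built from $\widetilde f$ exactly as $A,B$ from $f$. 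As $\|\widetilde\Psi\|_\infty=\|\Psi\|_\infty$, $\|d\widetilde\Psi\|_\infty=\|d\Psi\|_\infty$ and $\|\Sigma\|=\|X\|$, we may henceforth assume $X=\Sigma=\diag(\sigma_1,\dots,\sigma_k)$ (padded by zeroes), $k:=\min\{m,n\}$, $\sigma_i\ge0$, with $\Psi$ an arbitrary $C^1$ function on the Grassmannian.

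For diagonal $X$ everything is explicit: $\mathcal A(\Sigma)=\prod_{i=1}^k(1+\sigma_i^2)^{1/2}$, $S(\Sigma)=\diag\big((1+\sigma_1^2)^{-1},\dots,(1+\sigma_k^2)^{-1},1,\dots,1\big)$, $D\mathcal A(\Sigma)=\mathcal A(\Sigma)\,\Sigma\,S(\Sigma)$, and, by the standard formula for the differential of an orthogonal projection,
\[
Dh(X)[\dot X]=h^\perp(X)\binom{0}{\dot X}M(X)^{+}+\Big(h^\perp(X)\binom{0}{\dot X}M(X)^{+}\Big)^{\!T},
\]
with $M(X)^{+}=(S(X),\,S(X)X^T)$ the Moore--Penrose inverse and $h^\perp(X)=\id_{m+n}-h(X)$, whose blocks (in the notation of \eqref{splitting}) are $X^TT(X)X$, $-X^TT(X)$, $-T(X)X$, $T(X)$, where $T(X):=(\id_n+XX^T)^{-1}$. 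When $X=\Sigma$ all of $S,T,X^TT,TX$ are diagonal, so each block of $Dh(\Sigma)[E_{ab}]$ is supported on a single entry and equals a product of at most two of the numbers $\tfrac{\sigma_p}{1+\sigma_p^2}$, $\tfrac1{1+\sigma_p^2}$, $1$. All the required estimates then follow from the elementary inequalities $\tfrac{\sigma}{1+\sigma^2}\le\tfrac12$, $\tfrac{\sigma}{(1+\sigma^2)^{1/2}}\le1$, $\sigma_i\le\|X\|$, $1+\sigma_i^2\le1+\|X\|^2$, after multiplying through by $\mathcal A(\Sigma)=\prod(1+\sigma_i^2)^{1/2}$ and absorbing one factor $(1+\sigma_j^2)^{1/2}$ per distinguished index $j$: one gets $\|D\mathcal A(\Sigma)\|=\mathcal A(\Sigma)\max_i\tfrac{\sigma_i}{1+\sigma_i^2}\lesssim1+\|X\|^{k-1}$ and $\mathcal A(\Sigma)\|Dh(\Sigma)\|\lesssim1+\|X\|^{k-1}$, hence the bound on $\|A(X)\|$; and $\mathcal A(\Sigma)\|S(\Sigma)\|\lesssim1+\|X\|^{\min\{n,m-1\}}$, which together with the estimate for the second term gives the bound on $\|B(X)\|$.

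The one delicate point, and what I expect to be the main obstacle, is the exponent for $B$, namely $\min\{n,m-1\}$ rather than $\min\{m,n\}-1$. When $m>n$ we have $\|S(\Sigma)\|=1$ but $\mathcal A(\Sigma)$ carries only $n$ factors, so both summands of $B(\Sigma)$ are of order $\|X\|^{n}=\|X\|^{\min\{n,m-1\}}$. When $m\le n$ the factor $\|S(\Sigma)\|=(1+\sigma_{\min}^2)^{-1}$ cancels one factor of $\mathcal A(\Sigma)$ down to $\|X\|^{m-1}$; here the naive estimate $\mathcal A(\Sigma)\|\Sigma^T[\,\cdot\,]\|\le\|X\|\cdot\mathcal A(\Sigma)\|Dh(\Sigma)\|\lesssim\|X\|^{k}$ for the second term is one power too large, and the missing power is recovered by observing that multiplication by $\Sigma^T$ attaches the extra $\sigma_i$ to an index $i\le k$ at which the relevant block of $Dh(\Sigma)$ already carries a factor $(1+\sigma_i^2)^{-1}$ or $\sigma_i(1+\sigma_i^2)^{-1}$, and $\sigma_i(1+\sigma_i^2)^{1/2}(1+\sigma_i^2)^{-1}\le1$ while $\sigma_i(1+\sigma_i^2)^{1/2}\cdot\sigma_i(1+\sigma_i^2)^{-1}\le\sigma_i\le\|X\|$, so the degree never exceeds $k-1=\min\{n,m-1\}$. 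The remainder is bookkeeping over the four blocks of $Dh$ and the two summands of $A$ and $B$.
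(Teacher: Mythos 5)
Your argument is correct and reaches \eqref{Fest} by a genuinely shorter route than the paper's appendix proof. The backbone is shared: expand $A$ and $B$ via the chain rule, bound $\Psi$ and $d\Psi$ by compactness of $\mathbb{G}(m,m+n)$, reduce to a ``diagonal'' normal form using the $O(n)\times O(m)$--equivariance of $h$ and $\mathcal{A}$, and estimate explicitly. What differs is how far you push the reduction and, crucially, how you differentiate $h$. The paper computes $D\mathcal{A}$ through cofactors and $\partial_{ab}h_{ij}$ by hand from $h=MSM^{T}$, treating three index ranges for $(i,j)$ separately over several pages; its normal form (HP) only makes $X^{T}X$ diagonal, with an additional rotation in the target when $n<m$. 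You instead use the compact identities $D\mathcal{A}(X)=\mathcal{A}(X)XS(X)$ and $\mathcal{A}(X)\id_{m}-X^{T}D\mathcal{A}(X)=\mathcal{A}(X)S(X)$, reduce by a full SVD, and---this is the real shortcut---invoke the standard formula for the differential of an orthogonal projection onto a varying column space, $Dh(X)[\dot X]=h^{\perp}(X)\dot M\,M(X)^{+}+\bigl(h^{\perp}(X)\dot M\,M(X)^{+}\bigr)^{T}$, where $\dot M$ has vanishing top block and $\dot X$ as bottom block and $M(X)^{+}=S(X)M(X)^{T}$. This one formula replaces the paper's three index cases, and at a diagonal $\Sigma$ each block of $Dh(\Sigma)[E_{ab}]$ has at most two nonzero entries built from $\sigma_{p}(1+\sigma_{p}^{2})^{-1}$, $(1+\sigma_{p}^{2})^{-1}$ and $1$. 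Your bookkeeping for the exponent $\min\{n,m-1\}$ in the $B$-estimate---the naive $\|\Sigma^{T}\|\cdot\|Dh\|$ loses one power, recovered because the extra $\sigma_{i}$ coming from $\Sigma^{T}$ always lands on an index where $Dh(\Sigma)$ already carries a factor $(1+\sigma_{i}^{2})^{-1}$ or $\sigma_{i}(1+\sigma_{i}^{2})^{-1}$, together with $\sigma(1+\sigma^{2})^{-1/2}\le 1$ and $\sigma^{2}(1+\sigma^{2})^{-1/2}\le\sigma$---is exactly the cancellation the paper implements through its case analysis, and I checked it closes in both regimes $m\le n$ and $m>n$. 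The trade-off is that the paper's proof is entirely self-contained, while yours leans on the projection-derivative formula (standard, but deserving a one-line justification); what you gain is a substantially shorter and more conceptual argument with the same exponents.
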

In the statement of the Lemma and in the proof, the symbol $\Lambda \lesssim \Xi$ means that there exist a non-negative constant $C$ depending only on $n,m$ and on $\|\Psi\|_{C^1(\mathbb{G}(m,m+n)}$ such that
\[
\Lambda \le C\, \Xi\, .
\]
The lemma above is needed to get reach enough summability in order to justify the integral formulas in (the statement and the proof of) Lemma \ref{equality}. In some sense it is thus less crucial than the next lemma, which contains instead the core computations. For these reasons, the argument of Lemma \ref{growth}, which contains several lengthy computations is given in the appendix.

\begin{lemma}\label{equivalence}
For every $X \in \R^{n\times m}$,
\[
B_{\Psi}(h(X)) = V_f(X).
\]
\end{lemma}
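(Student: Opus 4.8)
The identity to prove is a pointwise statement in the variable $X\in\R^{n\times m}$, so I would fix $X$, set $P:=h(X)\in\mathbb{G}_0$, and compute both sides of \eqref{BPsi} tested against an arbitrary $L\in\R^{(m+n)\times(m+n)}$, using the block decomposition \eqref{splitting}. The left-hand side is $\langle B_\Psi(P),L\rangle$; what I want is to show it equals $\langle V_f(X),L\rangle$ with $V_f$ as in \eqref{V}. The natural route is: (i) express $\Psi(h(X))=f(X)/\mathcal{A}(X)$ and compute $d\Psi(h(X))$ via the chain rule through the diffeomorphism $h$, i.e. relate $d\Psi$ at $P=h(X)$ to $Df(X)$ and $D\mathcal{A}(X)$ and the differential $Dh(X)$; (ii) insert the explicit formula \eqref{hform} for $h$, so that $P$, $P^\perp$, and the quadratic expression $P^\perp L P+(P^\perp L P)^T$ are all written in terms of $X$, $S(X)=(\id_m+X^TX)^{-1}$, and the blocks $L_1,\dots,L_4$; (iii) collect terms and recognize the right-hand side as $\langle V_f(X),L\rangle$. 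Here $A(X)=Df(X)$ and $B(X)=f(X)\id_m-X^TDf(X)$ as in \eqref{e:defA}--\eqref{e:defB}.

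**Key algebraic simplifications.** The workhorse is the structure of $P=h(X)$: with $M(X)=\binom{\id_m}{X}$ one has $P=M S M^T$ with $S=(M^TM)^{-1}$, hence $PM=M$, $M^TP=M^T$, and $P^\perp M=0$. This makes $P^\perp L P$ very manageable: writing $L$ in blocks and $P$ in the block form of \eqref{hform}, the product $P^\perp L P$ lands in the "normal" directions, and its symmetrized version pairs against $d\Psi(P)$ in a way that, after the chain rule, produces exactly the off-diagonal/structured combination appearing in $V_f$. The term $\Psi(P)\langle P,L\rangle=\tfrac{f(X)}{\mathcal{A}(X)}\langle h(X),L\rangle$ contributes the "$f(X)\,\id_m$"-type pieces. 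A useful auxiliary computation is $Dh(X)[\dot X]$: differentiating $h(X)=M S M^T$ gives $Dh(X)[\dot X]=\dot M S M^T+M\,DS[\dot X]\,M^T+M S\dot M^T$ with $\dot M=\binom{0}{\dot X}$ and $DS[\dot X]=-S(\dot X^TX+X^T\dot X)S$; contracting $d\Psi(P)$ against $Dh(X)[\dot X]$ and using $D(\Psi\circ h)(X)=D(f/\mathcal{A})(X)$ converts everything into expressions in $Df(X)$, $f(X)$, $X$, and $S(X)$.

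**Main obstacle and how I would handle it.** The hard part is purely computational bookkeeping: one must carry the $2\times 2$ block structure of $(m+n)\times(m+n)$ matrices through several products ($P^\perp L P$, its transpose, and the contraction with $d\Psi$), keep track of where $S(X)$ and $X^T$ land, and then verify that the many resulting terms reorganize precisely into the four blocks $\tfrac1{\mathcal{A}}\!\begin{pmatrix}B & BX^T\\ A & AX^T\end{pmatrix}$ of \eqref{V}. There is genuine risk of sign and transpose errors. My strategy to keep this under control is to test against $L=e_a\otimes e_b$ for each pair of block-positions separately (i.e. verify the $L_1$, $L_2$, $L_3$, $L_4$ contributions independently), and to use the relations $PM=M$, $M^TP=M^T$, $P^\perp M=0$ aggressively to kill terms before expanding. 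A secondary subtlety is that \eqref{BPsi} is stated as a defining relation for $B_\Psi(P)$ "for all $L$"; since the right-hand side of \eqref{BPsi} is linear in $L$, this uniquely determines $B_\Psi(P)$, so it suffices to check that $V_f(X)$ satisfies the same relation — no separate well-posedness argument is needed. Once the block computation is completed and matched, the lemma follows, and I expect no conceptual difficulty beyond this.
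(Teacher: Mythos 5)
Your plan is essentially the paper's proof: fix $X$, use $\Psi = (f/\mathcal{A})\circ h^{-1}$ and the chain rule to convert $d\Psi$ at $P = h(X)$ into derivatives of $f$ and $\mathcal{A}$ at $X$, then expand $\Psi(P)\langle P,L\rangle$ and $\langle d\Psi(P),\,P^\perp L P + (P^\perp L P)^T\rangle$ in the block form \eqref{splitting} and collect. The only organizational difference is that the paper inverts the chain rule by computing $d(h^{-1})$ explicitly from the formula $h^{-1}(P)=P_2P_1^{-1}$ (which makes the vanishing of the $h(X)Lh(X)$- and $h(X)L^T$-contributions immediate), whereas you propose to differentiate $h = MSM^T$ forward and exploit $PM = M$, $P^\perp M=0$ to kill the same terms; these are dual ways to perform the identical computation, and your well-posedness remark about the linearity in $L$ is also correct.
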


\begin{lemma}\label{equality}
Let $f(X) = \Psi(h(X))\A(X)$ be a function of class $C^1(\R^{n\times m})$. Then, for every $g = (g^1,\dots, g^{m + n})\in C^1_c(\Omega\times\R^n)$, the following equality holds:
\begin{equation}\label{structure}
\delta_{\Psi}(\llbracket\mathcal{G}_{u}\rrbracket)(g) = \int_{\Omega}\langle B(Du(x)), D(g_1(x,u(x)))\rangle\dx +\int_{\Omega}\langle A(Du(x)), D(g_2(x,u(x)))\rangle\dx,
\end{equation}
where $g_1(x,y) := (g^1(x,y),\dots, g^m(x,y))$, $g_2(x,y) := (g^{m + 1}(x,y),\dots, g^{m + n}(x,y))$ and $A(X)$ and $B(X)$ are as in \eqref{e:defA} and \eqref{e:defB}.
\end{lemma}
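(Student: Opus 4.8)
The plan is to combine the abstract first--variation formula \eqref{B}, the identification $B_\Psi\circ h=V_f$ of Lemma \ref{equivalence}, the area formula of Proposition \ref{areaformula}, and the chain rule; the only genuine computation is an application of the cyclicity of the trace.

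First I would extend $g\in C^1_c(\Omega\times\R^n,\R^{m+n})$ by zero to $\R^{m+n}$ and apply \eqref{B} to $V=\llbracket\mathcal{G}_u\rrbracket$, which has multiplicity one. For $z=(x,u(x))\in\mathcal{G}_u$ the approximate tangent is $T_z\mathcal{G}_u=h(Du(x))\in\mathbb{G}_0$, a plane on which $B_\Psi$ is well defined since there $\Psi=(f/\A)\circ h^{-1}$ is $C^1$; hence
\[
\delta_\Psi(\llbracket\mathcal{G}_u\rrbracket)(g)=\int_{\mathcal{G}_u}\big\langle B_\Psi(T_z\mathcal{G}_u),Dg(z)\big\rangle\,d\mathcal{H}^m(z),
\]
where $Dg(z)$ is the full Jacobian of $g\colon\R^{m+n}\to\R^{m+n}$. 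Using Lemma \ref{equivalence} to replace $B_\Psi(h(Du(x)))$ by $V_f(Du(x))$ and then Proposition \ref{areaformula} with $\varphi(z,P)=\langle B_\Psi(P),Dg(z)\rangle$, this becomes
\[
\delta_\Psi(\llbracket\mathcal{G}_u\rrbracket)(g)=\int_\Omega\big\langle V_f(Du(x))\,\A(Du(x)),\,Dg(x,u(x))\big\rangle\dx,
\]
and by \eqref{V} the matrix $V_f(X)\A(X)$ equals $\left[\begin{array}{c|c}B(X)&B(X)X^T\\\hline A(X)&A(X)X^T\end{array}\right]$. The summability needed to pass from $\mathcal{G}_u$ to $\Omega$, and to give meaning to the right--hand side of \eqref{structure}, is exactly what the growth bounds of Lemma \ref{growth} provide, given $u\in W^{1,p}$ with $p\ge m$.

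Next I would decompose $Dg(x,u(x))$ into blocks as in \eqref{splitting}, with $D_xg_1$ of size $m\times m$, $D_yg_1$ of size $m\times n$, $D_xg_2$ of size $n\times m$, $D_yg_2$ of size $n\times n$, where $g_1=(g^1,\dots,g^m)$, $g_2=(g^{m+1},\dots,g^{m+n})$, all derivatives evaluated at $(x,u(x))$. Writing $A=A(Du(x))$, $B=B(Du(x))$, $X=Du(x)$ and expanding the Hilbert--Schmidt product block by block,
\[
\big\langle V_f(X)\A(X),Dg(x,u(x))\big\rangle=\langle B,D_xg_1\rangle+\langle BX^T,D_yg_1\rangle+\langle A,D_xg_2\rangle+\langle AX^T,D_yg_2\rangle .
\]
By cyclicity of the trace, $\langle BX^T,D_yg_1\rangle=\tr(XB^TD_yg_1)=\tr(B^TD_yg_1X)=\langle B,D_yg_1X\rangle$, and likewise $\langle AX^T,D_yg_2\rangle=\langle A,D_yg_2X\rangle$; the chain rule gives $D\big(g_1(x,u(x))\big)=D_xg_1+D_yg_1Du(x)$ and $D\big(g_2(x,u(x))\big)=D_xg_2+D_yg_2Du(x)$. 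Substituting, the integrand collapses to $\langle B(Du),D(g_1(x,u(x)))\rangle+\langle A(Du),D(g_2(x,u(x)))\rangle$, which is \eqref{structure}.

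There is no serious obstacle here; the heart of the matter is the block--matrix identity together with the trace manipulation just indicated. The points that demand some care are (i) keeping the ambient Jacobian $Dg(z)$ that enters \eqref{B} distinct from the derivative $D(g_i(x,u(x)))$ of the composition, the two being reconciled by the chain rule above, and (ii) the summability of the integrands, which is why Lemma \ref{growth} is established first. If one does not assume that $\Psi$ extends to a $C^1$ function on the compact Grassmannian (so that $B_\Psi$ need not be bounded on $\mathbb{G}_0$), Proposition \ref{areaformula} should be replaced in the step above by the underlying change--of--variables formula for $L^1$ integrands, with the same input from Lemma \ref{growth}.
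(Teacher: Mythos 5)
Your proposal is correct and follows essentially the same route as the paper: apply the first‐variation formula \eqref{B}, identify $B_\Psi\circ h=V_f$ via Lemma \ref{equivalence}, convert the surface integral to an integral over $\Omega$ with Proposition \ref{areaformula}, and conclude by the block decomposition of $V_f(X)\A(X)$ together with the chain rule. The explicit trace‐cyclicity manipulation you spell out is the same algebra the paper performs implicitly in its final display.
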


We next prove Lemma \ref{equivalence} and Lemma \ref{equality} and hence end the section showing how to use Lemma \ref{equality} to conclude the desired Proposition \ref{link}.

\subsection{Proof of Lemma \ref{equivalence}}

For a map $g: \mathbb{G}(m+n,m) \to \R^\ell$, $\ell \ge1$, of class $C^1$, we denote the differential at the point $P \in \mathbb{G}(m+n,m)$ with the symbol $d_Pg$. Moreover, for $H \in T_P\mathbb{G}(m+n,m)$, and for $\gamma: (-1,1)\to \mathbb{G}(m+n,m)$ with $\gamma(0) = P$, $\gamma'(0) = H$, we denote
\[
d_Pg(P)[H]:= \lim_{t \to 0}\frac{g(\gamma(t))-g(P)}{t}.
\]
If $\ell = 1$, we identify $d_Pg(P)$ with the $\R^{(m+n) \times (m + n)}$ associated matrix representing the differential, and we denote $d_Pg(P)[H]$ with $\langle d_Pg(P),H\rangle$. In this proof, we will use the following facts:

\begin{itemize}
\item The tangent plane of $\mathbb{G}(m,m+n)$ at the point $P$ is given by
\[
T_P\mathbb{G}(m,m+n) = \{M\in \R^{(m+n)\times(m+n)}: M = P^\perp L P +(P^\perp L P)^T, \text{for some }L \in \R^{(m + n)\times(m+n)}\},
\]
as proved in \cite[Appendix A]{DDG}.
\item Let $h: \R^{n\times m} \to \mathbb{G}_0$ be the map defined in \eqref{hform}. Then, it is simple to verify that
\begin{equation}\label{hh}
h^{-1}(P) = P_2P_1^{-1}.
\end{equation}
Moreover, for every $ H \in T_P\mathbb{G}(m,m + n)$, one has:
\begin{equation}\label{diff1}
d_P(h^{-1})(P)[H] = (H_2 - P_2P_1^{-1}H_1)P_1^{-1} \in \R^{n\times m}.
\end{equation}

\item Recall that the area functional is defined as
\[
\A(X) = \sqrt{\det(M(X)^TM(X))} \qquad \mbox{where}\;\; M(X) =
\left[
\begin{array}{c}
\id_m\\
X
\end{array}
\right].
\]
Hence, for every $X,Y \in \R^{n \times m}$, we have
\begin{equation}\label{darea}
\langle D\mathcal{A}(X),Y\rangle= \frac{1}{2}\mathcal{A}(X)\tr[(M(X)^TM(X))^{-1}(Y^TX+X^TY)].
\end{equation}
\end{itemize}

Recall the definition of $B_{\Psi}(P)$ given in \eqref{BPsi}. Since $$\Psi(P) = \frac{f(h^{-1}(P))}{\A(h^{-1}(P))},$$ for every $ H \in T_P\mathbb{G}(m,m + n)$ we have

\begin{align*}
\langle d_P\Psi(P), H\rangle =\frac{1}{\mathcal{A}(X)}\langle Df(h^{-1}(P)), d_P(h^{-1})(P)[H]\rangle - \frac{f(h^{-1}(P))}{\mathcal{A}^2(h^{-1}(P))}\langle D\mathcal{A}(h^{-1}(P)),d_P(h^{-1})(P)[H]\rangle.
\end{align*}
When evaluated at $P = h(X)$, the previous expression reads
\begin{equation}\label{firsteq}
\langle d_P\Psi(h(X)), H\rangle =\frac{1}{\mathcal{A}(X)}\langle Df(X), d_P(h^{-1})(h(X))[H]\rangle - \frac{f(X)}{\mathcal{A}^2(X)}\langle D\mathcal{A}(X),d_P(h^{-1})(h(X))[H]\rangle.
\end{equation}

By \eqref{BPsi}, we know that, for every $L \in \R^{(m + n)\times (m + n)}$,
\begin{align*}
\langle B_{\Psi}(h(X)),L\rangle = \Psi(h(X))\langle h(X),L\rangle + \langle d_P\Psi(h(X)), h(X)^\perp L h(X) + (h(X)^\perp L h(X))^T\rangle.
\end{align*}
Therefore, we want to compute \eqref{firsteq} when
\[
H = h(X)^\perp L h(X) + (h(X)^\perp L h(X))^T = Lh(X) - h(X)Lh(X) + h(X)L^T - h(X)L^Th(X).
\]
We wish to find an expression for
\[
d_P(h^{-1})(h(X))[h(X)^\perp L h(X) + h(X) L^T h(X)^\perp]\, .
\] 
Using the decomposition introduced in \eqref{splitting} of $L$ in 4 submatrices, we compute
\begin{equation}
Lh(X) =
\left[\begin{array}{c|c}
L_1 & L_3 \\ \hline
L_2 & L_4
\end{array}\right]
\left[\begin{array}{c|c}
S & SX^T \\ \hline
XS & XSX^T
\end{array}\right]=
\left[\begin{array}{c|c}
L_1S + L_3XS & L_1SX^T + L_3XSX^T \\ \hline
L_2S + L_4XS & L_2SX^T + L_4XSX^T
\end{array}\right]
\end{equation}
and
\begin{equation}\label{buona}
\begin{split}
&h(X) L h(X) = \\
&\left[\begin{array}{c|c}
S(L_1 + L_3X + X^TL_2 + X^TL_4X)S & S(L_1 + L_3X + X^TL_2 + X^TL_4X)SX^T  \\ \hline
XS(L_1 + L_3X + X^TL_2 + X^TL_4X)S &XS(L_1 + L_3X + X^TL_2 + X^TL_4X)SX^T  
\end{array}\right]
\end{split}
\end{equation}

Combining \eqref{diff1} with \eqref{buona}, we get
\begin{equation}\label{d1}
\begin{split}
d_P(h^{-1})(h(X))[L h(X)] &= (L_2S + L_4XS - XSS^{-1}L_1S - XSS^{-1}L_3XS) S^{-1} \\
&= L_2 + L_4X - XL_1 - XL_3X,
\end{split}
\end{equation}
\begin{equation}\label{d2}
\begin{split}
d_P(h^{-1})(h(X))&[h(X) L h(X)] \\
&= XS(L_1 +L_3X+ X^TL_2 + X^TL_4X - S^{-1}SL_1 - S^{-1}SL_3X \\
& - S^{-1}SX^TL_2 - S^{-1}SX^TL_4X)SS^{-1}\\
&= XS(L_1 + L_3X + X^TL_2 + X^TL_4X - L_1 - L_3X - X^TL_2 - X^TL_4X) = 0
\end{split}
\end{equation}
and
\begin{equation}\label{d3}
\begin{split}
d_P(h^{-1})(h(X))[h(X) L^T]& = d_P(h^{-1})(h(X))[(L\circ h(X))^T]\\
&= (XSL_1^T + XSX^TL_3^T - XSL_1^T - XSX^TL_3^T)S^{-1} = 0.
\end{split}
\end{equation}
Combining \eqref{d1}, \eqref{d2} and \eqref{d3}, we get that
\begin{align*}
d_P(h^{-1})(h(X))[h(X)^\perp L h(X) + h(X) L^T h(X)^\perp] &= 
d_P(h^{-1})(h(X))(L h(X)) \\&= L_2 + L_4X - XL_1 - XL_3X.
\end{align*}
Now define the matrix:
\[
C := L_2 + L_4X - XL_1 - XL_3X.
\]
To expand \eqref{firsteq}, we now need to rewrite $$\langle D\mathcal{A}(X),d_P(h^{-1})(h(X))[H]\rangle.$$ First, we must compute the trace part coming from \eqref{darea}:
\begin{align*}
\tr[S(C^TX&+X^TC)] = \tr[S(L_2^TX + X^TL_4^TX - L_1^TX^TX - X^TL_3^TX^TX)] \\
& + \tr[S(X^TL_2 + X^TL_4X - X^TXL_1 - X^TXL_3X)]\\
&= 2\tr(SX^TL_2) + 2\tr(SX^TL_4X) - 2\tr(SX^TXL_1) - 2\tr(SX^TXL_3X).
\end{align*}
Hence, if $H =h(X)^\perp L h(X) + h(X)L^T h(X)^\perp$, we have just proved that:
\begin{equation}\label{sum1}
\begin{split}
\langle& d_{P}\Psi(h(X)), H\rangle = \frac{1}{\mathcal{A}(X)}\langle Df(X),  L_2 + L_4X - XL_1 - XL_3X \rangle \\
&  -\frac{f(X)}{\mathcal{A}(X)}(\tr(SX^TL_2) + \tr(SX^TL_4X) - \tr(SX^TXL_1) - \tr(SX^TXL_3X)).
\end{split}
\end{equation}
To conclude, we also need to compute 
\begin{equation}\label{sum2}
\begin{split}
\Psi(h(X))\langle h(X),L\rangle &= \frac{f(X)}{\mathcal{A}(X)}(\langle L_1, S\rangle + \langle L_2, XS\rangle + \langle L_3, SX^T\rangle + \langle L_4, XSX^T\rangle)\\
&= \frac{f(X)}{\mathcal{A}(X)}(\tr(SL_1) + \tr(SX^TL_2) + \tr(XSL_3) + \tr(XSX^TL_4)).
\end{split}
\end{equation}
Now we sum \eqref{sum1} and \eqref{sum2} to get $\langle B_{\Psi}(h(X)), L\rangle$. Using that $S^{-1}(X) = X^TX + \id_{m}$ and the invariance of the trace under cyclic permutations, we rewrite
\begin{align*}
&\tr(SL_1) + \tr(SX^TL_2) + \tr(XSL_3) + \tr(XSX^TL_4) \\
& \quad -\tr(SX^TL_2) - \tr(SX^TL_4X) + \tr(SX^TXL_1) + \tr(SX^TXL_3X) =
\tr(L_1) + \tr(L_3X).
\end{align*}
Combining our previous computations, we find
\begin{align*}
\langle B_\Psi(h(X)),L\rangle &= \frac{f(X)}{\mathcal{A}(X)}(\tr(L_1) + \tr(L_3X)) + \frac{1}{\mathcal{A}(X)}\langle Df(X),  L_2 + L_4X - XL_1 - XL_3X \rangle\\
&= \frac{1}{\mathcal{A}(X)}[- \langle X^TDf(X) + f(X)\id_m,L_1\rangle + \langle Df(X),L_2\rangle\\
& \qquad +\langle f(X)X^T - X^TDf(X)X^T,L_3\rangle  + \langle Df(X)X^T,L_4 \rangle].
\end{align*}
Since $L$ was arbitrary, we conclude that
\[
B_{\Psi}(h(X)) = \frac{1}{\mathcal{A}(X)}
\left[\begin{array}{c|c}
B(X) & B(X)X^T \\ \hline
A(X) & A(X)X^T
\end{array}\right].
\]

\subsection{Proof of Lemma \ref{equality}}
Fix $g$ as in the statement of the Lemma. By \eqref{B}, we know that
\[
\delta_{\Psi}(\llbracket\mathcal{G}_{u}\rrbracket)(g)  = \int_{\mathcal{G}_{u}}\langle B_{\Psi}(T_z\mathcal{G}_{u}),Dg(z)\rangle d\mathcal{H}^m(z).
\]
Now define $F(z,T):=\langle B_{\Psi}(T),Dg(z)\rangle$ and $\bar F(x,u(x)):= \langle B_{\Psi}(h(Du (x))),Dg(x,u(x))\rangle$. We have $F \in C_c(\Omega\times \R^n\times \mathbb{G}(m,m+n))$ and we apply Proposition \ref{areaformula} to find the equality
\begin{equation}\label{areaf}
\int_{\mathcal{G}_{u}}\langle B_{\Psi}(T_z\Gamma_{u}),Dg(z)\rangle d\mathcal{H}^m(z) = \int_{\Omega} \A(Du(x))\bar F(x,u(x))\dx.
\end{equation}
By Lemma \ref{equivalence},
\[
\bar F(x,u(x)) = \langle V_f(Du(x)),Dg(x,u(x)) \rangle 
\]
a.e. in $\Omega$.
Moreover, since
\[
\mathcal{A}(Du(x)) V_f(Du(x)) = \left[\begin{array}{c|c}
B(Du(x)) & B(Du (x))Du(x)^T \\ \hline
A(Du(x)) & A(Du(x))Du(x)^T
\end{array}\right],
\]
we have
\begin{align*}
\A(Du(x))\bar F(x,u(x)) &= \langle D_xg^1(x,u(x)), B(Du(x))\rangle + \langle B(Du(x))Du^T(x),  D_yg^1(x,u(x))) \rangle \\
&+ \langle D_xg^2(x,u(x)), A(Du(x))\rangle + \langle A(Du(x))Du^T(x),  D_yg^2(x,u(x))) \rangle\\
& = \langle B(Du(x)), D(g^1(x,u(x))) \rangle + \langle A(Du(x)), D(g^2(x,u(x))) \rangle.
\end{align*}
The previous equality and \eqref{areaf} yield the conclusion.

\subsection{Proof of Proposition \ref{link}}
First, assume \eqref{graphs}, and fix any $g \in C_c^1(\Omega\times \R^n,\R^{m+n })$, $g = (g^1,\dots, g^{m +n})$. Define 
\begin{align*}
\bar\Phi(x) &:= (g^1(x,u(x)),\dots, g^m(x,u(x))\\
\bar v(x)&:= (g^{m+1}(x,u(x)),\dots, g^{m + n}(x,u(x))\, .
\end{align*} 
We have $\bar\Phi \in L^\infty \cap W_0^{1,m}(\Omega,\R^m)$ and $\bar v \in L^\infty \cap W_0^{1,m}(\Omega,\R^n)$. Notice that we require \eqref{graphs} to hold only for $C^1$ maps with compact support, but Lemma \ref{growth} implies through an approximation argument that
\begin{equation}\label{extended}
\begin{cases}
\displaystyle \left|\int_{\Omega}A(D u),D v\rangle\dx\right| \le C\|v\A^{\frac{1}{q}}(Du)\|_q, \forall v\in L^\infty\cap W^{1,m}_0(\Omega,\mathbb{R}^n) \vspace{1mm}\\ 
\displaystyle\left|\int_{\Omega}B(D u),D \Phi\rangle \dx\right| \le C\|\Phi\A^{\frac{1}{q}}(Du)\|_q, \forall \Phi\in L^\infty\cap W^{1,m}_0(\Omega,\mathbb{R}^m).
\end{cases}
\end{equation}
Indeed, to prove, for instance, that the first inequality holds for any $v \in L^\infty\cap W^{1,m}_0$, pick a sequence $v_k \in C^\infty_c(\Omega,\R^n)$ such that $\|v_k\|_{L^\infty}$ is equibounded and $v_{k} \to v$ in $W^{1,m}$, $L^q$ and pointwise a.e.. The fact that
\[
\int_{\Omega}\langle A(D u),D v_k\rangle\dx \to \int_{\Omega}\langle A(D u),D v\rangle\dx
\]
is an easy consequence of the $W^{1,m}$ convergence of $v_k$ to $v$ and the fact that $A(Du) \in W^{\frac{m}{m - 1}}(\Omega,\R^{n\times m})$ by Lemma \ref{growth}. Moreover, the quantity
\[
\|v_k\A^{\frac{1}{q}}(Du)\|_q\to \|v\A^{\frac{1}{q}}(Du)\|_q
\]
by the dominated convergence theorem. Indeed, we required the pointwise convergence of $v_k$ to $v$ and moreover we can bound for every $k$ and almost every $x \in \Omega$:
\[
\|v_k\A^{\frac{1}{q}}(Du)\|^q(x) \le \sup_{k}\|v_k\|^q_{L^\infty}\mathcal{A}(Du(x)) \in L^1(\Omega). 
\]
Hence \eqref{extended} with $v_k$ instead of $v$ implies the same inequality for $v$ by taking the limit as $k \to \infty$. The proof of the second inequality of \eqref{extended} is analogous. We combine \eqref{extended} with \eqref{structure} to write
\[
|\delta_{\Psi}(\llbracket\mathcal{G}_{u}\rrbracket)(g)| \le \left|\int_{\Omega}\langle A(D u),D \bar v\rangle\dx\right| + \left|\int_{\Omega}\langle B(D u),D \bar \Phi\rangle \dx\right| \le C(\|\bar v\A^{\frac{1}{q}}(Du)\|_q + \|\bar \Phi\A^{\frac{1}{q}}(Du)\|_q).
\]
Notice that, since $\bar v(\cdot,u(\cdot)) \in L^\infty(\Omega,\R^n)$ and $\bar \Phi(\cdot,u(\cdot)) \in L^\infty(\Omega,\R^n)$, we have 
$$\|\bar v(\cdot,u(\cdot))\|^q\mathcal{A}(D u(\cdot)) + \|\bar \Phi(\cdot,u(\cdot))\|^q\A(D u(\cdot)) \in L^1(\Omega).$$ 
Now we use the trivial estimate $\|\bar v(x,y)\|\le \|g(x,y)\|$ for all $ x \in \Omega, y \in \R^n$, and area formula \eqref{areaweak} to conclude
\begin{align*}
\|\bar v\mathcal{A}^{\frac{1}{q}}(Du)\|^q_q &= \int_{\Omega}\|\bar v(x,u(x))\|^q\mathcal{A}(D u(x))\dx \le \int_{\Omega}\|g(x,u(x))\|^q\mathcal{A}(D u(x))\dx \\
&= \int_{\mathcal{G}_{u}}\|g\|^q(z)d\mathcal{H}^m(z) = \|g\|^q_{L^q (\mathcal{G}_{u})}.
\end{align*}
With analogous estimates, we also find
\[
\|\bar \Phi\mathcal{A}^{\frac{1}{q}}(Du)\|^q_q\le \|g\|^q_{L^q(\mathcal{G}_{u})}.
\]
Therefore, \eqref{varifolds} holds with constant $C' = 2C$. Now assume \eqref{varifolds}. Choose the following sequence $g_k \in C_c^1(\Omega \times \R^n)$:
\[
g_k(x,y) := G(x)\chi_k(y),
\]
where $G \in C_c^1(\Omega,\R^{n + m})$, and $\chi_k \in C^\infty_c(\R^n)$ with $0 \le \chi_k(y) \le 1, \forall y \in \R^n$, $\chi_k \equiv 1$ on $B_k(0)$, $\chi_k \equiv 0$ on $B^c_{2k}(0)$ and
$\|D \chi_k(y)\| \le \frac{1}{k}$,  for all $y \in \R^n$. Using again area formula \eqref{areaweak}, we write
\[
\|g_k\|^q_{L^q( \mathcal{G}_{u})} = \int_{\Omega}\|g_k(x,u(x))\|^q\A(D u(x))\dx.
\]
Monotone convergence theorem implies
\[
\lim_k\|g_k\|^q_{L^q(\mathcal{G}_{u})} = \|G\mathcal{A}^\frac{1}{q}(Du)\|^q_q.
\]
Now we want to use \eqref{structure}. Using the same notation as in the statement of Lemma \ref{equality}, i.e. splitting $G$ into $G_1 = (G^1,\dots, G^m)$ and $G_2 = (G^{n + 1},\dots, G^{n + m})$, we have
\begin{align*}
&\int_{\Omega}\langle B(D u(x)), D((g_k)_1(x,u(x)))\rangle\dx = \int_{\Omega}\langle B(D u(x)), D(\chi_k(u(x))G_1(x))\rangle\dx\\
&=\int_{\Omega}\chi_k(u(x))\langle B(D u(x)), DG_1(x)\rangle\dx + \int_{\Omega}\langle B(D u(x)),G_1(x)\otimes (D\chi_k(u(x))Du(x))\rangle\dx
\end{align*}
By Lemma \ref{growth} and the regularity of $G_1$, we have that
\begin{equation}\label{domin}
\|DG_1\|\|B(Du)\| \in L^1(\Omega) \text{ and } \|G_1\|\|B(Du)\|\|Du\| \in L^1(\Omega).
\end{equation}
Since
\[
\chi_k(u(x))\langle B(D u(x)), DG_1(x)\rangle \to \langle B(D u(x)), DG_1(x)\rangle
\]
pointwise a.e. as $k \to \infty$, \eqref{domin} tells us that we can apply dominated convergence theorem to infer
\[
\lim_{k\to \infty}\int_{\Omega}\chi_k(u(x))\langle B(D u(x)), DG_1(x)\rangle\dx = \int_{\Omega}\langle B(D u(x)), DG_1(x)\rangle.
\]
Moreover using the pointwise bound $\|D\chi_k(u(x))\| \le \frac{1}{k}$,
\[
\left| \int_{\Omega}\langle B(D u(x)),G_1(x)\otimes (D\chi_k(x)Du(x))\rangle\dx\right| \le  \frac{1}{k}\int_{\Omega}\|B(D u(x))\|\|G_1(x)\|Du(x)\|\dx.
\]
Again through \eqref{domin}, we infer that the last term converges to $0$. This implies that
\[
\int_{\Omega}\langle B(D u(x)), D((g_k)_1(x,u(x)))\rangle\dx \to \int_{\Omega}\langle B(D u(x)), DG_1(x)\rangle\dx \text{ as }k\to \infty.
\]
In a completely analogous way,
\[
\int_{\Omega}\langle A(D u(x)), D((g_k)_2(x,u(x)))\rangle\dx \to \int_{\Omega}\langle A(D u(x)), DG_2(x)\rangle\dx \text{ as }k\to \infty.
\]
Now \eqref{structure} and the previous computations yield
\begin{align*}
&\int_{\Omega}\langle A(D u(x)), DG_2(x)\rangle\dx + \int_{\Omega}\langle B(D u(x)), DG_1(x)\rangle\dx \\
&=\lim_{k \to \infty}\left[\int_{\Omega}\langle A(D u(x)), D(g_k)_2(x)\rangle\dx + \int_{\Omega}\langle B(D u(x)), D(g_k)_1(x)\rangle\right]\dx\\
&= \lim_{k\to \infty}\delta_\Psi(\llbracket\mathcal{G}_{u}\rrbracket)(g_k) \le C'\lim_{k}\|g_k\|_{L^q( \mathcal{G}_{u})} = C'\|G\mathcal{A}^{\frac{1}{q}}(Du)\|_q ,
\end{align*}
and it is immediate to see that this implies \eqref{graphs} with constant $\bar C' = C'$.

\section{Some open questions}\label{questions}

We list here a series of questions related to the topic of the present paper. Firstly, as already explained in the introduction, the main question which motivated the investigations of this paper is the following widely open question. 

\begin{Q}\label{Q1}
Is it possible to prove an analog of W. Allard's celebrated regularity theorem \cite{ALLARD} if we consider strongly elliptic integrands (in the sense of Almgren) $\Psi$ on Grassmanian?
\end{Q}

The answer to this question is far from being immediate. A major obstacle is the lack of the monotonicity formula,   \cite{ALLMON}. Actually most of the proof in \cite{ALLARD} can be carried over if one know the validity of a Michael-Simon inequality. More precisely, consider a rectifiable varifold $V =  \theta\llbracket\Gamma\rrbracket$ with density bounded below (e.g. \(\theta \ge 1\)) and anisotropic variation $\delta_\Psi  V$ which is bounded in $L^1 (\theta \mathcal{H}^m \res \Gamma)$, i.e.
\[
\delta_\Psi (g) = \int _\Gamma H_\Psi \cdot g\, \theta\, d\mathcal{H}^m 
\]
for some $H_\Psi \in L^1$. The anisotropic Michael-Simon inequality would then take the conjectural form 
\begin{equation}\label{e:MS}
\left(\int_\Gamma h^{\frac{m}{m-1}} \theta\, d\mathcal{H}^m \right)^{\frac{m-1}{m}} \le C \int_\Gamma \theta |\nabla^\Gamma h| + C \int_\Gamma |h|\, |H_\Psi|\, \theta\,  d \mathcal{H}^m\,
\qquad \forall h \in C^1_c \, .
\end{equation}

\begin{Q}\label{Q2}
Is it possible to prove a Michael-Simon inequality as \eqref{e:MS}  for (at least some) anisotropic energies?
\end{Q}

Of course, Question \ref{Q1} has its counterpart on graphs, which amounts to extend the partial regularity of Evans for minimizers to stationary points. 

\begin{Q}\label{Q3}
Is it possible to extend the partial regularity theorem of \cite{EVA} to Lipschitz graphs that are stationary with respect to strongly polyconvex (or quasiconvex) energies?
\end{Q}

Answering these questions in this generality seems out of reach at the moment. It is however possible to formulate several interesting intermediate questions, many of which are related to the ``differential inclusions point of view'' adopted in the present paper. 

First of all we could consider stronger assumptions on the integrand $\Psi$. 
In the recent paper \cite{DDG}, A. De Rosa, the second named author and F. Ghiraldin introduce the so-called Atomic Condition. Such condition characterizes those energies for which (the appropriate extension of) Allard's rectifiability result holds. The following question is thus natural (see the forthcoming paper \cite{AT} for results in this direction):

\begin{Q}\label{Qadd1}
What is the counterpart of the Atomic Condition for functionals on graphs and what can be concluded from it in the graphical case?
\end{Q}

Secondly, a possible approach to Question \ref{Q1} is a continuation-type argument on the space of all energies. Since the area functional has a particular status, the following question is particularly relevant.

\begin{Q}\label{Qadd2}
Does an Allard type result holds for integrands which are sufficiently close to the area?
\end{Q}

In the forthcoming paper, \cite{AREA}, the fourth named author proves a partial result in the above direction. Using methods coming from the theory of differential inclusion, \cite{AREA} shows that graphs with small Lipschitz constant that are stationary with respect to functions sufficiently close to the area are regular. These results, other than the one in the present paper, seem to point to partial regularity for stationary varifolds (or graphs), as opposed to the situation of \cite{SMVS,LSP}.  

We note that a key step in the proof of Evans' partial regularity theorem is the so called Caccioppoli inequality which, roughly, reads as follows: for a minimizer \(u\) defined on \(B_2\)
\[
\int_{B_1} |Du-Da|^{2}\le C \int_{B_2} |u-a|^2 
\] 
for all affine functions \(a(x)=b+A x\). The geometric counterpart of this estimate  is used by Almgren in its partial regularity theorem for currents minimizing  anisotropic energies, \cite{Almgren68}. These inequalities  are obtained by direct comparison with suitable competitors. A similar estimate is obtained, by purely PDE techniques, by Allard in the case of stationary varifolds and it is one of the key step in establishing his regularity theorem for stationary varifolds. For co-dimension one stationary varifolds which are stationary with respect to anisotropic \emph{convex} integrands, a similar inequality is known to hold true, \cite{Allard86}. However, in general co-dimension, no condition on the integrand it is known to ensure its validity, not even in  neighbourhoods of the area integrand, \(\Psi=1\).

\begin{Q}
Which conditions on the integrands \(f\) or \(\Psi\) ensures the validity of a Cacciopoli type inequality for stationary points? 
\end{Q}

\medskip

In \cite{LASLOC}, it is proved that for differential inclusions of the form
\[
D u \in K,
\]
where $K$ is a compact set of $\R^{2\times 2}$ that does not contain $T_4$ configurations, compactness properties hold. In particular, if $\sup_{j}\|u_j\|_{W^{1,p}(B_1(0))}<+\infty$ for some $p > 1$, then there exists a subsequence $u_{j_k}$ such that $u_{j_k}$ converges strongly in $W^{1,q}(B_1(0))$ for every $q < p$. This kind of compactness property can actually be used to prove partial regularity of solutions to elliptic systems of PDEs. The strategy of \cite{LASLOC} does not apply directly to the higher dimensional case and motivates the following question

\begin{Q}
Let $f\in C^1 (\R^{n\times m})$ be a strictly polyconvex function and $K_f\subset \R^{(2n+m)\times m}$. Suppose $W_j : \Omega \to \R^{(2n + m) \times m}$ is a sequence of maps such that $\sup_j \|W_j\|_{\infty} < + \infty$ and
\[
\dist(W_j(x), K_f) \rightharpoonup 0
\]
in the weak topology of $L^p$. Then, up to subsequences, does $W_j$ converge strongly in $W^{1,p}$?
\end{Q}

 To formulate the next questions, let us recall the following definitions, see, for instance, \cite{MSK}, or \cite[Section 4.4]{DMU}. A function
\[
F: \R^{n\times m} \to \R
\]
is said to be rank-one convex if $h(t) := F(X + tY)$ is a convex function, for every $X, Y \in \R^{n\times m}$ and $\rank(Y) = 1$. It is said to be quasiconvex if $\forall \Phi \in C^{\infty}_c(B_1(0),\R^n), B_1(0) \subset \R^m$ and $X \in \R^{n\times m}$, one has
\[
\fint_{B_1(0)} F(X + D\Phi(x))\dx \ge F(X).
\]
For compact sets $K \subset \R^{n\times m}$, we define
\[
K^{rc} = \{X \in K: F(X) \le 0, \forall F:\R^{n\times m}\to \R \text{ rank-one convex s.t. } F(Z) \le 0, \forall Z \in K\}
\]
and analogously $K^{qc}$ (resp. $K^{pc}$) where one uses quasiconvex (resp. polyconvex) functions instead of rank-one convex functions. Moveover, one has the following chain of inclusions
\begin{equation}\label{chain}
K \subseteq K^{rc} \subseteq K^{qc} \subseteq K^{pc}.
\end{equation}
A necessary condition for compactness to hold is that $K^{qc} = K$ so that in particular $K^{rc} = K$. These results are consequence of the theory of Young Measures and in particular of the abstract result of \cite{KPE}. For a thorough explanation, we refer once again the reader to \cite[Section 4]{DMU}.

As discussed at the beginning of Section \ref{TpN}, in dimension 2 the stationarity of a graph is equivalent to solve
\begin{equation}\label{2dim}
D W \in K_f := \left\{A \in \R^{(2n + 2)\times 2}:
A =
\left(
\begin{array}{c}
X\\
Df(X)J\\
X^TDf(X)J - f(X)J
\end{array}
\right)
\right\},
\end{equation}
for some $W\in W^{1,\infty}(\Omega,\R^{2n + 2})$, where $J$ is the symplectic matrix
\[
\left( 
\begin{array}{ll}
0 & 1\\
-1 & 0
\end{array}\right)\, .
\]
Therefore we can ask
\begin{Q}\label{2dimq}
Let $f\in C^1 (\R^{n\times 2})$ be a strictly polyconvex function.
Is it true that $(K_f\cap \bar{B}_R(0))^{rc} = K_f\cap \bar{B}_R(0)$ for every $R > 0$?
\end{Q}

The same question can be generalized to $m > 2$ using the wave cone $\Lambda_{dc}$ of Definition \ref{d:cone_dc}. In analogy with rank-one convex functions, we can introduce $\Lambda_{dc}$-convex functions

\begin{Def} A function $F:\R^{(2n +m)\times m} \to \R$ is $\Lambda_{dc}$-convex if $h(t) := F(X + tY)$ is a convex function for every $X\in \R^{(2n+m)\times m}$ and $Y \in \Lambda_{dc}$. We also define, for a compact set $K \subset \R^{(2n+m) \times m}$ the $\Lambda_{dc}$-convex hull
\[
K^{dc} := \{X \in K: F(X) \le 0, \forall F:\R^{(2n+m_\times m)} \Lambda_{dc}\text{-convex s.t. } F(Z) \le 0, \forall Z \in K\}.
\]
\end{Def}
The multi-dimensional analogue of Question \ref{2dimq} is then the following:
\begin{Q}
Let $f\in C^1 (\R^{n\times m})$ be a strictly polyconvex function and $R>0$. Does 
\[
(K_f\cap \bar{B}_R(0))^{dc} = K_f\cap \bar{B}_R(0) ?
\]
\end{Q}

\appendix

\section{Proof of Proposition \ref{areaformula}}

First, by \cite[Sec. 1.5,Th. 1]{MGS1}, one has that if $w \in W^{1,m}(\Omega, \R^{m + n})$, then for every measurable set $A \subset \Omega$ and every measurable function $g: \R^{m + n} \to \R$ for which
\begin{equation}\label{request}
g(w(\cdot))J_w(\cdot) \in L^1(A),
\end{equation}
it holds
\[
\int_{A}g(w(x))J_w(x)\dx = \int_{\R^{m + n}}g(z)N(w,A,z)d\mathcal{H}^m(z),
\]
where
\[
J_w(x) = \sqrt{\det(Dw(x)^TDw(x))} \text{ and } N(w,A,z) := \#\{x: x \in A\cap A_D(w), w(x) = z\}.
\]
We want to apply this result with
\[
A = \mathcal{L}_u, \; w(x) = v(x), \; g(x,y) := F(v(x),T_{v(x)}\mathcal{G}_{u}), \forall x \in \Omega, y \in \R^m.
\]
In this way, it is straightforward by the fact that $\mathcal{R}_u = \mathcal{L}_u\cap A_D(u)$ and the definition of $v(x)$ that $N(v,\mathcal{L}_u,z) = 1$ for $\mathcal{H}^m\res \mathcal{G}_{u}$ and $N(v,\mathcal{L}_u,z) = 0$ if $z \notin \mathcal{G}_{u}$. Hence:
\[
\int_{\R^{m + n}}g(z)N(w,A,z)d\mathcal{H}^m(z) = \int_{\mathcal{G}_{u}}F(v(x),T_{v(x)}\mathcal{G}_{u})d\mathcal{H}^m(z).
\]
Moreover, since $|\Omega\setminus \mathcal{L}_u| = 0$ and $J_w(x) = \mathcal{A}(Du(x))$, we also find
\[
\int_{\mathcal{L}_u}g(w(x))J_w(x)\dx = \int_{\Omega}F(v(x),T_{v(x)}\mathcal{G}_{u})\mathcal{A}(Du(x))\dx.
\]
Since $u \in W^{1,m}(\Omega,\R^n)$ and $F \in C_b(\Omega\times\R^n\times\mathbb{G}_0)$, \eqref{request} is fulfilled and we can apply the aforementioned result to obtain the desired equality \eqref{areaweak}.

\section{Proof of Lemma \ref{growth}}
First of all we compute $D\mathcal{A}(X)$. Recall the notation on multi-indices introduced in Definition \ref{multiind} and the definition of the matrix $\overline{{\cof}(X^Z)^T}$ in the proof of Proposition \ref{p:convexity}. Then, since
\[
\mathcal{A}(X) = \sqrt{1 +\|X\|^2 + \sum_{2\le r \le \min\{m,n\}}\sum_{Z \in \mathcal{A}_r}\det(X_Z)^2},
\]
we have
\begin{equation}\label{diffarea}
D\mathcal{A}(X) = \frac{X + \sum_{2\le r \le \min\{m,n\}}\sum_{Z \in \mathcal{A}_r}\det(X_Z)\overline{{\cof}(X^Z)^T}}{\mathcal{A}(X)},\; \forall X \in \R^{n\times m}.
\end{equation}
Next, we observe that by the chain rule
\[
D(\Psi(h(X))_{ij} = \sum_{1\le \alpha,\beta\le m + n}(\partial_{\alpha\beta}\Psi)(h(X))\partial_{ij}h_{\alpha\beta}(X),
\]
hence
\begin{equation}\label{chain}
D(\Psi(h(X)) =\sum_{1\le \alpha,\beta\le m + n}(\partial_{\alpha\beta}\Psi)(h(X))Dh_{\alpha\beta}(X).
\end{equation}
We can therefore write
\begin{align*}
A(X) &= \Psi(h(X))D\mathcal{A}(X) + \mathcal{A}(X)D(\Psi(h(X)) \\
&= \Psi(h(X))D\mathcal{A}(X) + \mathcal{A}(X)\sum_{1\le \alpha,\beta\le m + n}(\partial_{\alpha\beta}\Psi)(h(X))Dh_{\alpha\beta}(X)
\end{align*}
and
\[
B(X) = \Psi(h(X))(-X^TD\mathcal{A}(X) +\mathcal{A}(X)\id_m) + \mathcal{A}(X)\sum_{1\le \alpha,\beta\le m + n}(\partial_{\alpha\beta}\Psi)(h(X))X^TDh_{\alpha\beta}(X).
\]
Since $\mathbb{G}(m,m + n)$ is compact, we have that both $\Psi(h(X))$ and $(D\Psi)(h(X))$ are bounded in $L^\infty(\R^{n\times m})$ by a constant $c > 0$ and using \eqref{diffarea}, we can bound
\[
\Psi(h(X))\|D\mathcal{A}(X)\| \lesssim \|X\|^{\min\{m,n\} - 1}.
\]
Moreover, for every $X \in \R^{n \times m}$, $2 \le r \le \min\{m,n\}$ and $Z \in \mathcal{A}_r$, we have
\[
X^T\overline{{\cof}(X^Z)^T} = \det(X_Z)I_Z,
\]
where, if $\delta_{ab}$ denotes Kronecker's delta and $Z$ has the form $Z = (i_1,\dots, i_r,j_1,\dots, j_r)$, $I_Z$ is the $m\times m$ matrix defined as
\[
(I_Z)_{ij}=
\begin{cases}
0, &\text{ if $i \neq i_a$} \text{ or } j \neq j_b, \forall a,b,\\
\delta_{ab}, &\text{ if }i = i_a, j=j_b.
\end{cases}
\]
Therefore
\begin{equation}\label{almost}
-X^TD\mathcal{A}(X) + \mathcal{A}(X)\id_m = -\frac{X^TX + \sum_{2\le r \le \min\{m,n\}}\sum_{Z \in \mathcal{A}_r}\det^2(X_Z)I_Z - \mathcal{A}^2(X)\id_{m}}{\mathcal{A}(X)}.
\end{equation}
If $n \le m - 1$, then the best way to estimate the previous expression is simply
\[
\|X^TD\mathcal{A}(X) - \mathcal{A}(X)\id_m\| \lesssim 1 + \|X\|^n.
\]
On the other hand, if $n \ge m$, then for $Z \in \mathcal{A}_m$ we have $I_Z = \id_m$, hence \eqref{almost} becomes
\begin{align*}
-X^TD\mathcal{A}(X) + \mathcal{A}(X)\id_m &= -\frac{X^TX + \sum_{2\le r \le \min\{m,n\}}\sum_{Z \in \mathcal{A}_r}\det^2(X_Z)I_Z - \mathcal{A}^2(X)\id_{m}}{\mathcal{A}(X)}\\
& = - \frac{X^TX - (1 + \|X\|^2)\id_m + \sum_{2\le r \le m - 1}\sum_{Z \in \mathcal{A}_r}\det^2(X_Z)(I_Z-\id_{m})}{\mathcal{A}(X)}.
\end{align*}
In this case
\[
\|X^TD\mathcal{A}(X) - \mathcal{A}(X)\id_m\| \lesssim 1 + \|X\|^{m - 1}.
\]
To conclude the proof of the Lemma, we still need to prove that for every $1\le i,j \le m + n$,
\begin{equation}\label{boilsdown}
\mathcal{A}(X)\|Dh_{ij}(X)\| \lesssim 1 + \|X\|^{\min\{n,m\} - 1}, \; \mathcal{A}(X)\|X^TDh_{ij}(X)\| \lesssim 1 + \|X\|^{\min\{m-1,n\}}.
\end{equation}
To perform the computation, we need to divide it into cases corresponding to the four blocks of the matrix $h(X)$ as written in \eqref{hform}. To this end, recall the notation
\[
S(X) = (\id_m + X^TX)^{-1},
\]
and moreover notice that $h(X)$ is symmetric, therefore we just need to prove \eqref{boilsdown} in the case $i \le j$. Another useful fact is the following. First notice that for every matrices $N \in O(n)$, $M \in O(m)$ ($O(k)$ is the group of orthogonal matrices of order $k$), one has
\[
S(NXM) = M^TS(X)M.
\]
From an easy computation we then conclude that, for every $1 \le i,j \le m + n$ and for every $X \in \R^{n\times m}, N \in O(n), M \in O(m)$,
\begin{align}
\|Dh_{ij}(X)\| &\lesssim \sum_{1\le a,b \le m + n} \|Dh_{ab}(NXM)\|\label{simp}\\
\|X^TDh_{ij}(X)\| &\lesssim \sum_{1\le a,b \le m + n} \|(NXM)^TDh_{ab}(NXM)\|\label{simp2}.
\end{align}
Since also $\mathcal{A}(X) = \mathcal{A}(NXM)$, $\forall X\in \R^{n\times m}, M \in O(m), N \in O(n)$, \eqref{simp}-\eqref{simp2} tell us that we can check estimates \eqref{boilsdown} just on matrices $Y:=NXM$ with two additional hypotheses. Fix $X \in \R^{n\times m}$, define $Z = XM$ and denote the $j$-th column of a matrix $A \in \R^{n\times m}$ with $A^j$. First, by a suitable choice of $M$, we can make sure that $Y^TY = Z^TZ = M^TX^TXM$ is diagonal. Once this choice is made, if $n \ge m$, then we choose $N = \id_n$. Otherwise, if $n < m$, then we observe that at most $n$ of the columns of $Z$ are non-zero, let these be $Z^{j_1},\dots, Z^{j_n}$ and let us define $J:= \{j_1,\dots, j_n\}$ with $1 \le j_1 < j_2<\dots < j_n \le m $. If for some $j_k$ we have $Z^{j_k} = 0$, then we set $N = \id_n$. Otherwise, the $n \times n$ matrix $V$ formed using $Z^{j_1},\dots,Z^{j_n}$ has columns that are pairwise orthogonal and nonzero, hence there exists $O \in O(n)$ such that
\[
V = OD,
\]
with $D$ diagonal. In this case, we choose $N = O^T$, so that the resulting $Y$ has the property that
\[
Y^j =
\begin{cases}
y_\ell e_\ell &\text{ if } j = j_\ell, j_\ell \in \{j_1,\dots, j_n\}, \\
0 &\text{ otherwise},
\end{cases}
\]
where $y_j \in \R$ and $e_\ell$ are the vectors of the canonical basis of $\R^n$. Notice that this choice of $M$ and $N$ also implies that
\[
\mathcal{A}(Y) = \sqrt{\prod_{i = 1}^m(1 + \|Y^i\|^2)} \text{ and } S(Y) = \diag((1 + \|Y^1\|^2)^{-1},\dots,(1 + \|Y^m\|^2)^{-1}).
\]
We call (HP) these assumptions on the matrix $Y \in \R^{n\times m}$.
\\
\\
\fbox{First case, $1\le i\le j \le m$:} In this case, $h_{ij} = S_{ij}$. We have
\[
\sum_{1\le k \le m}S^{-1}_{ik}S_{kj} = \delta_{ij},
\]
hence, taking a derivative,
\[
\sum_{1\le k \le m}\partial_{ab}S^{-1}_{ik}S_{kj} + \sum_{1\le k\le m}S^{-1}_{ik}\partial_{ab}S_{jk} = 0.
\]
We can invert the previous relation to get
\begin{equation}\label{inverse}
\partial_{ab}S_{kl} = -\sum_{1\le c,d \le m}S_{kc}S_{ld}\partial_{ab}S^{-1}_{cd} .
\end{equation}
Finally, since $S^{-1}_{ik} = \delta_{ik} + \sum_{1\le l \le m}x_{li}x_{lk}$, we have
\[
\partial_{ab}S^{-1}_{ik} = \sum_{1\le c \le m}\delta_{ci}^{ab}x_{ck} + \sum_{1\le c \le m}\delta_{ck}^{ab}x_{ci},
\]
where the symbol $\delta_{\alpha\beta}^{cd} = 0$ if $\alpha \neq c$ or $\beta \neq d$, otherwise $\delta_{\alpha\beta}^{cd} = \delta_{\alpha\beta}^{\alpha\beta} = 1$. We can therefore use \eqref{inverse} to write
\begin{equation}\label{partialfinal}
\begin{split}
\partial_{ab}S_{ij} &= -\sum_{1\le k,l \le m}S_{ik}S_{jl}\left( \sum_{1\le c \le m}\delta_{ck}^{ab}x_{cl} + \sum_{1\le c \le m}\delta_{cl}^{ab}x_{ck}\right) \\
&= -\sum_{1\le k,l,c \le m}S_{ik}S_{jl}\delta_{ck}^{ab}x_{cl} - \sum_{1\le k,l,c \le m}\delta_{cl}^{ab}x_{ck}S_{ik}S_{jl} = -\sum_{1\le l \le m}\left(S_{ib}S_{jl}x_{al} + x_{al}S_{il}S_{jb}\right).
\end{split}
\end{equation}
Moreover,
\[
(X^TDS_{ij})_{cd} = \sum_{1 \le a \le n }x_{ac}\partial_{ad}S_{ij} =-\sum_{1 \le l \le m, 1\le a\le n}\left(S_{id}S_{jl}x_{al}x_{ad} + x_{ad}x_{al}S_{il}S_{jd}\right).
\]
Now we use our previous observation \eqref{simp}-\eqref{simp2} to consider $Y$ satisfying (HP), so that in particular $Y^TY$ is diagonal. In this case, we have
\begin{align*}
|\partial_{ab}S_{ij}(Y)| \le \sum_{1\le l \le m}\left(|S_{ib}S_{jl}y_{al}| + |y_{al}S_{il}S_{jb}|\right).
\end{align*}
For every $1\le i,b,j,l \le m, 1\le a \le n$,
\[
\mathcal{A}(Y)|S_{ib}S_{jl}y_{al}| \le \sqrt{\prod_{c = 1}^{m}(1 + \|Y^i\|^2)}\frac{|y_{al}|}{(1 + \|Y^b\|^2)(1 + \|Y^l\|^2)}.
\]
Let us explain in detail how to get the desired estimate \eqref{boilsdown} in this case. Notice that either $Y^l$ is $0$, and in this case there is nothing to prove, or $Y^l \neq 0$. Thanks to (HP), in $Y$ there are at most $\min\{m,n\}$ non-zero columns. First let $m \le n$, then:
\[
\sqrt{\prod_{c = 1}^{m}(1 + \|Y^c\|^2)}\frac{|y_{al}|}{(1 + \|Y^b\|^2)(1 + \|Y^l\|^2)} \lesssim \sqrt{\prod_{c = 1}^{m}(1 + \|Y^c\|^2)}\frac{1}{\sqrt{1 + \|Y^l\|^2}} \lesssim 1 + \|Y\|^{m - 1}.
\]
If $n < m$ and $J$ is the set on indices corresponding to non-zero columns, we are in the hypothesis in which $l \in J$. Therefore we have
\[
\sqrt{\prod_{c = 1}^{m}(1 + \|Y^c\|^2)}\frac{|y_{al}|}{(1 + \|Y^b\|^2)(1 + \|Y^l\|^2)} \lesssim \sqrt{\prod_{c \in J}(1 + \|Y^c\|^2)}\frac{1}{\sqrt{1 + \|Y^l\|^2}} \lesssim 1 + \|Y\|^{m - 1}.
\]
This proves that 
\begin{equation}\label{ff}
\|Dh_{ij}(Y)\| \lesssim 1 + \|Y\|^{\min\{m,n\} - 1} \text{ for } 1\le i,j \le m.
\end{equation}
 We also have
\[
\mathcal{A}(Y)|(Y^TDS_{ij})_{cd}(Y)| \le \mathcal{A}(Y)\sum_{1 \le l \le m, 1\le a\le n}\left(|S_{id}S_{jl}y_{al}y_{ad}| + |y_{ad}y_{al}S_{il}S_{jd}|\right).
\]
Analogously to the previous case, we estimate for every $1\le i,d,j,l\le m, 1\le a \le n$,
\[
\mathcal{A}(Y)|S_{id}S_{jl}y_{al}y_{ad}| \le \sqrt{\prod_{c = 1}^m(1 + \|Y^c\|^2)}\frac{|y_{al}||y_{ad}|}{(1 + \|Y^d\|^2)(1 + \|Y^l\|^2)},
\]
and the desired estimate is obtained with a reasoning completely analogous to the one of \eqref{ff}. This concludes the proof of this case.
\\
\\
\fbox{Second case, $1\le i \le m < m+1\le j \le m+n$:} From now on we use $m+j$ rather than $j$ for the corresponding index. We thus have 
\[
h_{ij+m}(X) = (S(X)X^T)_{ij} = \sum_{k = 1}^mS_{ik}x_{jk}\, .
\] 
We compute the derivative using \eqref{partialfinal}:
\[
\partial_{ab}h_{ij+m}(X) = \sum_{k = 1}^m\delta_{jk}^{ab}S_{ik} +  \sum_{k = 1}^mx_{jk}\partial_{ab}S_{ik} = \sum_{k = 1}^m\delta_{jk}^{ab}S_{ik} -\sum_{1\le l,k \le m}\left(S_{ib}S_{kl}x_{al}x_{jk} + x_{al}x_{jk}S_{il}S_{kb}\right),
\]
and also
\begin{align*}
(X^TDh_{ij+m}(X))_{ab}= \sum_{1\le c \le n}x_{ca}\partial_{cb}h_{ij} &= \sum_{1\le k \le m, 1 \le c \le n}x_{ca}\delta_{jk}^{cb}S_{ik}\\
&- \sum_{1\le l,k \le m, 1 \le c \le n}\left(x_{ca}S_{ib}S_{kl}x_{cl}x_{jk} + x_{ca}x_{cl}x_{jk}S_{il}S_{kb}\right)\\
&= x_{ja}S_{ib} -\sum_{1\le l,k \le m, 1 \le c \le n}\left(x_{ca}S_{ib}S_{kl}x_{cl}x_{jk} + x_{ca}x_{cl}x_{jk}S_{il}S_{kb}\right)
\end{align*}
Since $S^{-1}(X) = \id_m + X^TX$,
\begin{equation}\label{def}
\delta_{ij} =\sum_{1\le k \le m} S_{ik}(\delta_{kj} + \sum_{1\le c \le n}x_{ck}x_{cj}) =S_{ij} + \sum_{1\le k \le m,1\le c \le n}S_{ik}x_{ck}x_{cj},
\end{equation}
hence we can rewrite
\begin{equation}\label{secondsecond}
\begin{split}
(X^TDh_{ij+m}(X))_{ab}&=  x_{ja}S_{ib} -\sum_{1\le l,k \le m, 1 \le c \le n}\left(x_{ca}S_{ib}S_{kl}x_{cl}x_{jk} + x_{ca}x_{cl}x_{jk}S_{il}S_{kb}\right)\\
& =  x_{ja}S_{ib} -\sum_{k =1}^m\left(S_{ib}x_{jk}\sum_{1\le l \le m, 1 \le c \le n}x_{ca}S_{kl}x_{cl}+ x_{jk}S_{kb}\sum_{1\le l \le m, 1 \le c \le n}x_{ca}x_{cl}S_{il}\right)\\
& = x_{ja}S_{ib} -\sum_{k =1}^m S_{ib}x_{jk}(\delta_{ka} - S_{ka})-\sum_{k = 1}^mx_{jk}S_{kb}(\delta_{ai} - S_{ai}) \\
& = \sum_{k =1}^m S_{ib}x_{jk}S_{ka} +\sum_{k = 1}^mx_{jk}S_{kb}\delta_{ai} + \sum_{k = 1}^mx_{jk}S_{kb}S_{ai}.
\end{split}
\end{equation}
Now we evaluate the previous expressions at $Y$ satisfying (HP). Using the fact that $Y^TY$ is diagonal, we simplify:
\begin{equation}\label{hhh}
\begin{split}
\partial_{ab}h_{ij+m}(Y) &= \sum_{k = 1}^m\delta_{jk}^{ab}S_{ik} -\sum_{1\le l,k \le m}\left(S_{ib}S_{kl}y_{al}y_{jk} + y_{al}y_{jk}S_{il}S_{kb}\right) \\
&= \delta_{ja}\delta_{ib}S_{ii} -\sum_{1\le k \le m}\delta_{ib}S_{ii}S_{kk}y_{ak}y_{jk} - y_{ai}y_{jb}S_{ii}S_{bb}
\end{split}
\end{equation}
First, let $m \le n$. Then, using that for every $1 \le a,j,k \le n$ we have
\[
|S_{kk}y_{ak}y_{jk}| \le 1,
\]
we estimate
\begin{align*}
\mathcal{A}(Y)|\partial_{ab}h_{ij +m}(Y)| &\le \mathcal{A}(Y)|S_{ii}| + \mathcal{A}(Y)\sum_{1\le k \le m}|S_{ii}S_{kk}y_{ak}y_{jk}| +\mathcal{A}(Y)y_{ai}y_{jb}S_{ii}S_{bb}\\
& \lesssim \frac{\mathcal{A}(Y)}{1 + \|Y^i\|^2} + \frac{\mathcal{A}(Y)}{\sqrt{1 + \|Y^i\|^2}\sqrt{1 + \|Y^b\|^2}} \lesssim 1 + \|Y\|^{m - 1}.
\end{align*}
If $n < m$, let $J = \{j_1,\dots, j_n\}$ be the set of indices defined in (HP). If there exists $\ell$ such that $Z^{j_\ell} = 0$, then
\[
\mathcal{A}(Y) = \sqrt{\prod_{t \in J}(1 + \|Y^t\|^2)} = \sqrt{\prod_{t \in J\setminus \{j_k\}}(1 + \|Y^t\|^2)} \lesssim 1 + \|Y\|^{n - 1}
\]
and
\[
|\partial_{ab}h_{ij+m}(Y)| \le S_{ii} + \sum_{1\le k \le m}S_{ii}S_{kk}|y_{ak}y_{jk}| + |y_{ai}y_{jb}|S_{ii}S_{bb} \lesssim 1,
\]
therefore
\[
\mathcal{A}(Y)|\partial_{ab}h_{ij}(Y)| \lesssim 1 + \|Y\|^{n - 1}.
\]
Hence we are just left with the case $n < m$ and $Y^{j_\ell} \neq 0$ for every $1 \le \ell \le n$. If this is the case, (HP) implies that $y_{kj_\ell} = \delta_{k\ell}y_{\ell j_\ell}$, for $ 1 \le k \le n$, and $y_{kj} = 0$ if $j \notin J$ and $1 \le k \le n$. Therefore, recalling \eqref{hhh},
\[
\partial_{ab}h_{ij+m}(Y) = 
\begin{cases}
S_{ii} -S_{ii}S_{j_aj_a}y^2_{a j_a} - y_{ai}y_{jb}S_{ii}S_{bb}& \text{ if } j = a, i = b,\\
-y_{ai}y_{jb}S_{ii}S_{bb}& \text{ otherwise}.
\end{cases}
\] 
In the first case, if $j = a, i = b$, we have $$S_{j_aj_a} = \frac{1}{1 + \|Y^{j_a}\|^2} = \frac{1}{1 + y^2_{aj_a}},$$ hence
\[
1 - S_{j_aj_a}y^2_{aj_a} = 1 - \frac{y^2_{aj_a}}{1 + y^2_{aj_a}} = \frac{1}{1 + y^2_{aj_a}} = \frac{1}{1 + \|Y^{j_a}\|^2}
\]
that implies
$$\partial_{ab}h_{ij+m}(Y) =  \frac{1}{1 + \|Y^{j_a}\|^2} - \frac{y_{ai}y_{jb}}{(1 + \|Y^i\|^2)(1 + \|Y^b\|^2)},$$
and it is now easy to see that
\[
\mathcal{A}(Y)|\partial_{ab}h_{ij+m}(Y)| \lesssim 1 + \|Y\|^{n - 1}.
\]
Since if $j \neq a$ or $b \neq i$, $\partial_{ab}h_{ij+m}(Y) = -y_{ai}y_{jb}S_{ii}S_{bb}$, the same estimate follows. To finish the second case, we still need to show that
\[
\mathcal{A}(Y)|(Y^TDh_{ij+m}(Y))_{ab}| \lesssim 1 + \|Y\|^{\min\{m,n\} - 1}.
\]
To do so, we recall \eqref{secondsecond} to estimate
\begin{align*}
|(Y^TDh_{ij+m}(Y))_{ab}| &\le \sum_{k =1}^m S_{ib}|y_{jk}|S_{ka} +\sum_{k = 1}^m|y_{jk}|S_{kb}\delta_{ai} + \sum_{k = 1}^m|y_{jk}|S_{kb}S_{ai}.
\end{align*}
With similar computations to the one to prove \eqref{ff}, we estimate for $1\le i,b,a,k \le m, 1 \le j \le n$,
\[
\mathcal{A}(Y)S_{ib}|y_{jk}|S_{ka} \le
\begin{cases}
0 &\text{ if } Y^k = 0 \text{ or } k \neq a,\\
\sqrt{\prod_{l \neq k}(1 + \|Y^l\|^2)} &\text{ otherwise},
\end{cases}
\]
that implies $$\mathcal{A}(Y)S_{ib}|y_{jk}|S_{ka} \lesssim 1 + \|Y\|^{\min\{m,n\} - 1}.$$ Finally, since also for every $1\le j \le n$, $1\le k,b \le m$
\[
\mathcal{A}(Y)|y_{jk}|S_{kb} \le
\begin{cases}
0 &\text{ if } Y^k = 0 \text{ or } k \neq b,\\
\sqrt{\prod_{l \neq k}(1 + \|Y^l\|^2)} &\text{ otherwise},
\end{cases}
\]
we find
\[
\mathcal{A}(Y)|y_{jk}|S_{kb} \lesssim 1 + \|Y\|^{\min\{m,n\} - 1}, \forall 1\le k,b \le m, 1\le j \le n.
\]
This completes the proof of the second case.
\\
\\
\fbox{Third case, $m+1 \le i \le j \le m+n$:} As above we use $m+i$ and $m+j$ in place of $i$ and $j$. The indices $i$ and $j$ will then satisfy $1\leq i \leq j \leq n$ and we have $$h_{i+m,j + m}(X) = (XS(X)X^T)_{ij} = \sum_{1\le l,k \le m}x_{il}S_{lk}x_{jk}.$$ We compute the derivative using \eqref{partialfinal}:
\begin{align*}
\partial_{ab}h_{i+m, j+m}(X) &=  \sum_{1\le l,k \le m}\delta_{il}^{ab}S_{lk}x_{jk} +  \sum_{1\le l,k \le m}\delta_{jk}^{ab}S_{lk}x_{il} +  \sum_{1\le l,k \le m}x_{il}\partial_{ab}S_{lk}x_{jk}\\
&=\sum_{1\le k \le m}\delta_{ia}S_{bk}x_{jk} +  \sum_{1\le l \le m}\delta_{ja}S_{lb}x_{il}\\
& -\sum_{1 \le l,k,c \le m}S_{lb}S_{kc}x_{ac}x_{il}x_{jk} -\sum_{1 \le l,k,c\le m} x_{ac}S_{lc}S_{kb}x_{il}x_{jk}.
\end{align*}
Moreover,
\begin{equation}\label{LAST}
\begin{split}
(X^TDh_{i+m, j+m}(X))_{ab}= \sum_{1\le d \le n}x_{da}\partial_{db}h_{ij} &=\sum_{1\le d \le n, 1 \le k \le m}\delta_{id}x_{da}S_{bk}x_{jk} +  \sum_{1\le d \le n, 1 \le l \le m}\delta_{jd}S_{lb}x_{il}x_{da}\\
& -\sum_{1\le c,l,k \le m, 1 \le d \le n}S_{lb}S_{kc}x_{dc}x_{il}x_{jk}x_{da} \\
&-\sum_{1 \le c,l,k \le m, 1 \le d\le n} x_{da}x_{dc}S_{lc}S_{kb}x_{il}x_{jk}.
\end{split}
\end{equation}
By \eqref{def}, we have, for every $1 \le i,j \le m$
\[
\sum_{1\le k \le m,1\le d \le n}S_{ik}x_{dk}x_{dj} = \delta_{ij} - S_{ij}.
\]
Hence we can rewrite in \eqref{LAST}:
\begin{align*}
(X^TDh_{i+m, j+m}(X))_{ab}= \sum_{1\le d \le n}x_{da}\partial_{db}h_{ij} &=\sum_{1\le d \le n, 1 \le k \le m}\delta_{id}x_{da}S_{bk}x_{jk} +  \sum_{1\le d \le n, 1 \le l \le m}\delta_{jd}S_{lb}x_{il}x_{da}\\
& -\sum_{1 \le l,k \le m}S_{lb}x_{il}x_{jk}(\delta_{ka} - S_{ka}) -\sum_{1 \le l,k\le n} S_{kb}x_{il}x_{jk}(\delta_{la} - S_{la})\\
& =\sum_{1\le d \le n, 1 \le k \le m}\delta_{id}x_{da}S_{bk}x_{jk} +  \sum_{1\le d \le n, 1 \le l \le m}\delta_{jd}S_{lb}x_{il}x_{da}\\
& -\sum_{1 \le l,k \le m}S_{lb}x_{il}x_{jk}\delta_{ka} -\sum_{1 \le l,k\le m} S_{kb}x_{il}x_{jk}\delta_{la}\\
& +\sum_{1 \le l,k \le m}S_{lb}x_{il}x_{jk}S_{ka}+\sum_{1 \le l,k\le m} S_{kb}x_{il}x_{jk}S_{la}\\
& =\sum_{1\le k \le m}x_{ia}S_{bk}x_{jk} +  \sum_{1\le l \le m}S_{lb}x_{il}x_{ja}\\
& -\sum_{1 \le l \le m}S_{lb}x_{il}x_{ja} -\sum_{1 \le k\le m} S_{kb}x_{ia}x_{jk}\\
& +\sum_{1 \le l,k \le m}S_{lb}x_{il}x_{jk}S_{ka}+\sum_{1 \le l,k\le m} S_{kb}x_{il}x_{jk}S_{la}\\
&= \sum_{1 \le l,k \le m}S_{lb}x_{il}x_{jk}S_{ka}+\sum_{1 \le l,k\le m} S_{kb}x_{il}x_{jk}S_{la}.
\end{align*}
Consider once again $Y$ fulfilling (HP). Then:
\begin{align*}
\partial_{ab}h_{i+m, j+m}(Y)&=\sum_{1\le k \le m}\delta_{ia}S_{bk}y_{jk} +  \sum_{1\le l \le m}\delta_{ja}S_{lb}y_{il}\\
& -\sum_{1 \le l,k,c \le m}S_{lb}S_{kc}y_{ac}y_{il}y_{jk} -\sum_{1 \le l,k,c\le m} x_{ac}S_{lc}S_{kb}y_{il}y_{jk}.
\end{align*}
Since $Y^TY$ is diagonal, this expression simplifies as:
\begin{align*}
\partial_{ab}h_{i+m, j+m}(Y)&= \delta_{ia}S_{bb}y_{jb} + \delta_{ja}S_{bb}y_{ib}\\
& -\sum_{1 \le c \le m}S_{bb}S_{cc}y_{ac}y_{ib}y_{jc} -\sum_{1 \le c\le m} y_{ac}S_{cc}S_{bb}y_{ic}y_{jb}.
\end{align*}
For every $1 \le b \le m, 1 \le j \le n$,
\begin{equation}\label{thirdfirst}
\mathcal{A}(Y)S_{bb}|y_{jb}| \le
\begin{cases}
0& \text{ if } Y^b = 0,\\
\frac{\mathcal{A}(Y)}{\sqrt{1 + \|Y^b\|^2}}& \text{ otherwise}.
\end{cases}
\end{equation}
This yields
\[
\mathcal{A}(Y)S_{bb}|y_{jb}| \lesssim 1 + \|Y\|^{\min\{m,n\}- 1}.
\]
To prove that
\begin{equation}\label{thirdfirstfinal}
\mathcal{A}(Y)|\partial_{ab}h_{i+mj+m}(Y)| \lesssim 1 + \|Y\|^{\min\{m,n\} - 1},
\end{equation}
we still need to estimate terms of the form
\[
\mathcal{A}(Y)S_{bb}S_{cc}|y_{ac}y_{ib}y_{jc}|,
\]
for $1\le b,c \le m$, $1\le a,i,j \le n$. Anyway, observe that
\[
S_{cc}|y_{ac}||y_{jc}| \le 1, \forall 1\le c \le m, 1\le a,j \le n, 
\]
hence
\[
\mathcal{A}(Y)S_{bb}S_{cc}|y_{ac}y_{ib}y_{jc}| \le \mathcal{A}(Y)S_{bb}|y_{ib}|
\]
and we can therefore apply again estimate \eqref{thirdfirst} to deduce \eqref{thirdfirstfinal}. To finish the proof of this case and of the present Lemma, we still need to show that
\begin{equation}\label{thirdsecond}
|(Y^TDh_{i+m, j+m}(Y))_{ab}| \lesssim 1 + \|Y\|^{\min\{m,n\}- 1}. 
\end{equation}
To do so, recall that
\begin{align*}
(Y^TDh_{i+m, j+m}(Y))_{ab} &= \sum_{1 \le l,k \le m}S_{lb}y_{il}y_{jk}S_{ka}+\sum_{1 \le l,k\le m} S_{kb}y_{il}y_{jk}S_{la}\\
&= S_{bb}S_{aa}y_{ib}y_{ja} + S_{bb}S_{aa}y_{ia}y_{jb},
\end{align*}
but now, for every $1\le a,b \le m, 1 \le i,j \le n$,
\[
\mathcal{A}(Y)S_{bb}S_{aa}|y_{ib}y_{ja}| \le
\begin{cases}
0& \text{ if } Y^b = 0 \text{ or } Y^a = 0,\\
\frac{\mathcal{A}(Y)}{\sqrt{1 + \|Y^b\|^2}\sqrt{1 + \|Y^a\|^2}}& \text{ otherwise}.
\end{cases}
\]
The proof of \eqref{thirdsecond} is now analogous to the one of \eqref{thirdfirstfinal}.

\bibliographystyle{abbrv}
\bibliography{Inclusioni26}

\end{document}